\newtheorem{theorem}{Theorem}[section]
\newtheorem{proposition}[theorem]{Proposition}
\newtheorem{lemma}[theorem]{Lemma}
\newtheorem{remark}[theorem]{Remark}
\numberwithin{equation}{section}
\newcommand \Kcal {\mathcal K}
\newcommand \Hcal {\mathcal H}
\newcommand \del \partial
\newcommand \delu {\underline{\del}}
\newcommand \Tu {\underline{T}}
\newcommand \minu{\underline{m}}
\newcommand \Pu{\underline{P}}
\newcommand \Psiu{\underline{\Psi}}
\newcommand \Phiu{\underline{\Phi}}
\newcommand \RR{\mathbb{R}}
\newcommand {\vep}{\varepsilon}
\def\hlinew#1{%
  \noalign{\ifnum0=`}\fi\hrule \@height #1 \futurelet
   \reserved@a\@xhline}
\let\oldmarginpar\marginpar
\renewcommand\marginpar[1]{\-\oldmarginpar[\raggedleft\footnotesize #1]%
{\raggedright\footnotesize #1}}
\begin{document}
\setcounter{footnote}{-1}
\title{Global solutions of non-linear wave-Klein-Gordon system in two space dimension:\\ semi-linear interactions \footnote{The present work belongs to a research project ``Global stability of quasilinear wave-Klein-Gordon system in
$2 + 1$ space-time dimension'' (11601414), supported by NSFC.}}
\author{Yue MA\footnote{School of Mathematics and Statistics, Xi'an Jiaotong University, Xi'an, Shaanxi 710049, P.R. China}}

\maketitle
\begin{abstract}
In this work we consider the problem of global existence of small regular solutions to a type nonlinear wave-Klein-Gordon system with semi-linear interactions in two spatial dimension. We develop some new techniques on both wave equations and Klein-Gordon equations in order to get sufficient decay rates when energies are not uniformly bounded. These techniques are compatible with those introduced in previous work on two spatial-dimensional quasi-linear wave-Klein-Gordon systems, and can be applied in much general cases.
\end{abstract}
%\tableofcontents
\section{Introduction}\label{sec intro}
\subsection{Objective}
The present work belongs to a research program in which we try to understand the global existence of small regular solution associated to a nonlinear wave-Klein-Gordon system with the presence of different nonlinear terms in $2+1$ space-time dimensions. In this article, we will develop some new techniques in the hyperboloidal foliation framework which can supply more precise decay bounds on wave and Klein-Gordon equations. Then equipped with these tools, we regard the following system
\begin{equation}\label{eq main}
\left\{
\aligned
&\Box u = v\left(A^{\alpha}\del_{\alpha}v + R v\right) + Q^{\alpha\beta}\del_{\alpha}v\del_{\beta}v
\\
&\Box v + v = P^{\alpha\beta}\del_{\alpha}u\del_{\beta}u
\endaligned
\right.
\end{equation}
where in the wave equation presents the quadratic semi-linear terms of Klein-Gordon component while in the Klein-Gordon equation presents the semi-linear terms of wave component. The $A^{\alpha}, R$ and $Q^{\alpha\beta}$ are constants and the quadratic form $P^{\alpha\beta}$ is supposed to be a {\sl null} form, that is,
\begin{equation}\label{eq 0 null}
P^{\alpha\beta}\xi_{\alpha}\xi_{\beta} = 0,\quad \forall \xi\in \RR^3,\quad \xi_0^2-\xi_1^2-\xi_2^2 = 0.
\end{equation}

Such a choice is because that in previous work, the quasi-linear terms and semi-linear terms of Klein-Gordon component coupled in Klein-Gordon equation is treated in \cite{M2}, semi-linear terms of wave component coupled in wave equation (when in scalar case) can be eliminated by a algebraic transformation. So now we regard the interactions of the two components: the semi-linear terms of one component coupled in the other's equation.

The techniques to be developed in the present work is compatible with those developed in \cite{LM1} and \cite{M1}, thus can be applied in the following system:
$$
\left\{
\aligned
\Box u = &(P_1^{\alpha\beta\gamma}\del_{\gamma}u + Q_1^{\alpha\beta\gamma}\del_{\gamma}v + R_1^{\alpha\beta}v)\del_{\alpha}\del_{\beta}u
\\
&+ N_1^{\alpha\beta}\del_{\alpha}u\del_{\beta}u + v\left(A_1^{\alpha}\del_{\alpha}v + R_1 v\right) + B_1^{\alpha\beta}\del_{\alpha}v\del_{\beta}v,
\\
\Box v + v =&(P_2^{\alpha\beta\gamma}\del_{\gamma}u + Q_2^{\alpha\beta\gamma}\del_{\gamma}v + R_2^{\alpha\beta}v)\del_{\alpha}\del_{\beta}v
\\
&+ N_2^{\alpha\beta}\del_{\alpha}u\del_{\beta}u + v\left(A_2^{\alpha}\del_{\alpha}v + R_2 v\right) + B_2^{\alpha\beta}\del_{\alpha}v\del_{\beta}v.
\endaligned
\right.
$$
But for the clarity we limit on the case \eqref{eq main}.

The problem of global existence of small regular solution to nonlinear hyperbolic systems has attracted lots of attention of the mathematical community. We recall the pioneer works belong to S. Klainerman in $3+1$ space-time dimensions, where he introduce the vector field method on wave equations (\cite{Kl1}) and Klein-Gordon equations (in \cite{Kl2}). This type of method is then largely extended in many contexts.

In $2+1$ space-time dimensions, the global existence result becomes more delicate, because the decay rate of the homogeneous linear wave equation and homogeneous linear Klein-Gordon is significantly weaker than the $3+1$ case. But equipped with some carefully developed technical tools, people also managed to prove the global existence in many cases, we recall especially the work of S. Alinhac (\cite{A1} and \cite{A2}) on the quasi-linear wave equations and the work of J-M. Delort et al. (\cite{DFX}) on quasi-linear Klein-Gordon equations.

When we consider the system composed by wave and Klein-Gordon equations, the situation becomes more complicated. Because one of the conformal Killing vector field of the linear wave operator -- the scaling vector $S$, is no longer a conformal Killing vector field of the linear Klein-Gordon equation. This is essentially due to the fact that wave equations are scaling invariant but Klein-Gordon equations are not. This important difference prevents any attempt of combining na\"ivlely the techniques of wave equations with those techniques developed for Klein-Gordon equations.

On the analysis of wave-Klein-Gordon system, a first essential work is due to S. Katayama (\cite{Ka}) in $3+1$ dimensions.  Then P. LeFloch and the author have introduce the so-called hyperboloidal foliation method in the analysis of wave-Klein-Gordon system in $3+1$ dimensions (\cite{LM1}) and then applied on Einstein-scalar field system with positive mass scalar field (\cite{LM3}). This method is based on a basic observation made by L. H\"ormander in \cite{Ho1} in the analysis of Klein-Gordon equations. Then it is realized that this method can be generalized on the analysis of wave equations with some additional simple but nontrivial remarks. These techniques allow us to treat the wave equation without applying the scaling vector $S$, thus supplies a framework in which one can analyse simultaneously both wave and Klein-Gordon equations. Similar method is also applied on other type of systems such as \cite{S1}. We also recall that the application of hyperboloidal foliation supplies efficient and robust numerical method, see for example \cite{An1}, \cite{OCP}, \cite{Rinne},  \cite{Va1}, \cite{Ze1}, \cite{Ze2} etc.

The application of the hyperboloidal foliation method on wave-Klein-Gordon in $2+1$ dimension is introduced in \cite{M1} and then a system containing quasi-linear terms is treated in \cite{M2}, where a technique called the``normal form method'' is combined with the hyperboloidal foliation. This ``normal form method'', is firstly introduced by J. Shatah in \cite{Shatah85} in $3+1$ dimension, and then generalized by J-M. Delort et al. in \cite{DFX} in $2+1$ dimension combined with the vector field method on pure Klein-Gordon systems. 
 
In the present work, we continue the discussion on global existence and regard the system \eqref{eq main}. The main purpose is, during the discussion on the global existence problem, developing some new techniques, compatible with those introduced in \cite{M1} which can supply  more precise descriptions on the solution.

Roughly speaking, semi-linear Klein-Gordon terms coupled in wave equation is more difficult than both semi-linear wave terms and quasi-linear terms. Recall that in these cases we are able to get an uniform-in-time bound for some lower-order energies for the wave component, and then by Klainerman-Sobolev type inequality, we establish $s^{-1}$ decay rate of the gradient of $u$ (which is necessary to put into operation the bootstrap argument, $s^2 = t^2-r^2$). But in the present case, it is difficult to make the above strategy work anymore, because the $L^2$ norm of semi-linear terms of Klein-Gordon component (even in homogeneous linear case) has a decreasing rate as $t^{-1}$ which is no longer integrable. This makes us to establish a new estimate on $\del u$, through which we can obtain the $s^{-1}$ decay rate when the corresponding energy is not bounded.

\subsection{Basic notation and statement of the main result}
In this subsection and section \ref{sec tools} we recall some necessary notation and technical tools. For detailed description, see \cite{LM1} and \cite{M1}.

We are working in the $2+1$ dimensional space-time with standard Minkowski metric. We denote by $\Kcal := \{(t,x)\in \RR^3| t>r+1\}$ with $r = |x|$ the Euclidian norm of $x\in\RR^2$. $\Hcal_s:= \{(t,x)\in\RR^3|t = \sqrt{s^2+r^2}\}$ the hyperboloid with hyperbolic radius $s$. We denote by
$$
\Kcal_{[s_0,s_1]} := \left\{(t,x)\in\Kcal|\sqrt{s_0^2+r^2}\leq t\leq \sqrt{s_1^2+r^2}\right\}
$$
the part of $\Kcal$ limited by two hyperboloids with radii $s_0$ and $s_1$.

In the region $\Kcal$, we introduce the following vector field:
$$
L_a: = x^a\del_t + t\del_a, \quad a = 1,2
$$
which are called the {\bf Lorentz boosts}. $L_a$ together with $\del_{\alpha}$ are the vector fields which commute with the flat wave operator $\Box = \del_t\del_t - \sum_a\del_a\del_a$ and the flat Klein-Gordon operator $\Box + 1$. We use the notation $\del^IL^J$ for a product of some first-order derivatives. For example:
$$
\del_0\del_1\del_0L_1L_2 = \del^IL^J
$$
with $I=(0,1,0)$ and $J = (1,2)$ two multi-indices. We denote by $|I|$ the order (or length) of $I$.

We introduce the following $L^2$ norm on a hyperboloid: let $u$ be a function defined in $\Kcal$, then
$$
\|u\|_{L^2(\Hcal_s)}^2 := \int_{\Hcal_s}u^2dx = \int_{\RR^2}\big|u(\sqrt{s^2+r^2},x)\big|^2dx
$$

Now we are ready to state the main result.
\begin{theorem}\label{thm main}
Consider the following Cauchy problem of \eqref{eq main} with initial data posed on the initial hyperboloid $\Hcal_2$:
$$
u|_{\Hcal_2} = u_0,\quad \del_tu|_{\Hcal_2} = u_1,\quad v|_{\Hcal_2} = v_0, \quad \del_t v|_{\Hcal_2} = v_1.
$$
Then there exists a integer $N\geq 11$ and a positive constant $\vep_0$ determined by $N$ and the system \eqref{eq main} such that if for $|I|\leq N$ and $|I'|\leq N-1$,
\begin{equation}
\|\del_{\alpha}\del^{I'} u_0\|_{L^2(\Hcal_2)} + \|\del^I v_0\|_{L^2(\Hcal_2)}\leq \vep_0,\quad \|\del^{I'}u_1\|_{L^2} + \|\del^{I'}v_1\|_{L^2(\Hcal_2)}\leq \vep_0
\end{equation}
with $u_i,v_i$ supported in $\Hcal_2\cap \Kcal$ ($i = 0,1$),
then  the local-in-time solution of \eqref{eq main} associated with such initial data extends to time infinity.
\end{theorem}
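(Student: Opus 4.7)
\medskip\noindent\textbf{Proof proposal.}
The plan is to run a bootstrap argument on the hyperboloidal foliation $\{\Hcal_s\}_{s\geq 2}$, controlling high order energies
$$
E_N(s,u) = \sum_{|I|+|J|\leq N}\|\del\del^IL^J u\|_{L^2(\Hcal_s)}^2,\qquad E_N^{KG}(s,v) = \sum_{|I|+|J|\leq N}\Big(\|\del\del^IL^J v\|_{L^2(\Hcal_s)}^2 + \|\del^IL^J v\|_{L^2(\Hcal_s)}^2\Big),
$$
for the Klein-Gordon component together with a weighted version including the $(s/t)$ weight. Because the forcing of the wave equation contains genuinely quadratic semi-linear Klein-Gordon terms, I do not expect to close with uniform-in-$s$ energy bounds on $u$; instead, I would take bootstrap assumptions of the form $E_N(s,u)^{1/2}\leq C_1\eps\, s^{\delta}$, $E_{N-2}(s,u)^{1/2}\leq C_1\eps\, s^{\delta'}$ with $0<\delta'\ll\delta\ll 1$, while keeping $E_N^{KG}(s,v)^{1/2}\leq C_1\eps\, s^{\delta''}$ at the top order and uniform-in-$s$ at low order. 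The goal is then to recover the same bounds with $C_1$ replaced by $C_1/2$ for $\eps$ small.

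The standard hyperboloidal energy inequality reduces matters to integrating the $L^2(\Hcal_s)$ norms of the differentiated right-hand sides against $ds$. For the Klein-Gordon equation, the forcing is $P^{\alpha\beta}\del_\alpha u\del_\beta u$ with $P$ null, and I would use the hyperboloidal null-form decomposition to replace one of the two factors $\del u$ by a ``good'' derivative (either $s/t\,\del u$ or $t^{-1}L_a u$). Combined with the Klainerman-Sobolev inequality on $\Hcal_s$ (which trades two $\del^IL^J$ weights for a pointwise $s^{-1}$ decay), this yields integrability of the KG forcing in $ds$ and thus bounded top energy for $v$ modulo a logarithmic loss. For the wave equation, the forcing $v(A^\alpha\del_\alpha v+Rv)+Q^{\alpha\beta}\del_\alpha v\del_\beta v$ carries at least one undifferentiated $v$ or the product $\del v\cdot\del v$; I would put one factor in $L^\infty$ (using the pointwise $t^{-1}$ decay of the Klein-Gordon component obtained from its higher energy via Klainerman-Sobolev) and keep the other in $L^2$.

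The main obstacle, as emphasized in the introduction, is that in the wave equation these Klein-Gordon quadratic sources only decay as $t^{-1}$ in $L^2(\Hcal_s)$, which is not integrable in $s$; hence $E_N(s,u)$ genuinely grows, and the usual Klainerman-Sobolev pathway $E_{N-2}(s,u)\lesssim\eps^2\Rightarrow|\del u|\lesssim\eps s^{-1}$ is no longer immediately available at low order. The new ingredient I would build is a direct pointwise estimate of $\del\del^IL^J u$ for $|I|+|J|\leq N-2$, obtained by integrating the wave equation along the integral curves of $\del_t+(x^a/r)\del_a$ (or, more precisely, reexpressing $\Box u$ on hyperboloids and integrating in an adapted variable), and then using the known decay of the right-hand side together with a $t^{-1/2}$ factor from the 2D wave integration kernel. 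This should produce an $s^{-1}$ pointwise bound on $\del u$ (up to $s^{\delta'}$ losses) even while $E_{N-2}(s,u)$ grows, precisely the bound needed to control the $P^{\alpha\beta}\del_\alpha u\del_\beta u$ source of the KG equation at intermediate order.

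Once this refined pointwise bound on $\del u$ is in hand, the closure is schematic: substitute the improved $s^{-1+\delta'}$ decay for $|\del u|$ and the Klein-Gordon decays $|v|+|\del v|\lesssim \eps t^{-1}$ into the right-hand sides, integrate the hyperboloidal energy inequalities in $s$, and verify that the powers of $s$ thus produced are strictly smaller than the ones postulated in the bootstrap, which determines the admissible range for $N$, $\delta$, $\delta'$, $\delta''$. Provided $N\geq 11$ gives enough room between the top order and the low order controlled by Klainerman-Sobolev, and provided $\eps_0$ is chosen small enough to absorb all multiplicative constants, the bootstrap closes and the local solution extends for all $s\geq 2$, which is equivalent to global existence in $\Kcal$. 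The hardest technical step will be the derivation of the new $s^{-1}$ pointwise estimate on $\del u$ under non-uniform energy, since it is precisely this step that is not available from the standard hyperboloidal toolkit.
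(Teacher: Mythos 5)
Your proposal correctly identifies the central difficulty (the Klein--Gordon quadratic source in the wave equation is only $t^{-1}$ in $L^2(\Hcal_s)$, so the wave energy must grow and Klainerman--Sobolev alone cannot produce the needed $s^{-1}$ decay of $\del u$), and you correctly guess that the remedy is a direct pointwise ``ODE along characteristics'' estimate for $\del u$. However, there is a genuine gap: you do not explain how the right-hand side of that ODE can be bounded well enough to actually produce $s^{-1}$, and this is where most of the paper's new machinery lives. When one rewrites $\Box u$ on the semi-hyperboloidal frame and integrates $w=(t-r)^{1/2}t^{1/2}\del_t u$ along the integral curves of $\mathcal{L}=(s/t)^2\del_t+2(x^a/t)\delu_a$, the resulting ODE reads
$$
w' + P\,w = (1+(r/t)^2)^{-1}(t-r)^{1/2}t^{1/2}\Big(\Box u+\sum_a\delu_a\delu_a u\Big),
$$
and two features are indispensable. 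First, the left-hand side carries a \emph{positive potential} $P\sim (t-r)/t^2$, giving a damping factor $e^{-\int P}$ that is exactly what prevents the $s^\delta$ loss from propagating; an integral along bare characteristics with no such potential would re-introduce the loss. Second, the source must be shown to satisfy $|\Box u|+|\delu_a\delu_a u|\lesssim (s/t)t^{-2}$, and neither term is accessible from Klainerman--Sobolev. The bound on $\Box u$ requires a \emph{sharper} $L^\infty$ estimate on $v$ and $\del v$ of the form $(s/t)^{2}s^{-1}$ (not $t^{-1}s^\delta$), which the paper gets by the normal-form change of unknown $\tilde w:=v-P^{\alpha\beta}\del_\alpha u\del_\beta u$ combined with the hyperbolic decomposition of the Klein--Gordon operator; and the bound on $\delu_a\delu_a u = t^{-2}L_aL_a u - t^{-2}(x^a/t)L_a u$ requires a pointwise bound $|u|\lesssim (s/t)$ on the \emph{undifferentiated} wave component, which the paper obtains by a separate and fairly delicate analysis of Kirchhoff's formula in two space dimensions under the hypothesis $|\Box u|\lesssim t^{-2}$. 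Neither the normal-form step nor the Kirchhoff step appears in your sketch, and without them the ODE estimate cannot close: you would only recover $s^{-1+\delta}$ or worse, and the bootstrap would fail.

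A smaller remark on the bootstrap hierarchy: you propose a stratified assumption with several exponents $\delta\gg\delta'\gg\delta''$ and different growth rates at different orders; the paper in fact uses the single assumption $E(s,\del^IL^Ju)^{1/2}+E_1(s,\del^IL^Jv)^{1/2}\leq C_1\vep\, s^\delta$ for all $|I|+|J|\leq N$, and the $s^\delta$ loss is compensated at the pointwise level by the positive potential in the $\mathcal{L}$-ODE and by the $(s/t)$ gains from the null structure, rather than by a hierarchy of energy exponents. Your version is not wrong in spirit, but it is an overcomplication that does not by itself solve the problem of turning $s^{-1+\delta}$ into $s^{-1}$.
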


\begin{remark}
The initial data is posed on a initial hyperboloid $\Hcal_2$, which seems to be not standard. But it can be seen as following: we consider the region $\Kcal_0:=\{(t,x)\in \Kcal| 2\leq t\leq \sqrt{4+r^2}\}$ and suppose that the initial data is posed on the initial plan $\{t=2\}$ with support contained in the unit disc. Then by the local theory (and finite speed of propagation), when initial data is sufficiently small, the local solution can be extended to the region $\{2\leq t\leq 5/2\}$ and supported in $\Kcal$. We see that $\{2\leq t\leq 5/2\}\supset \Kcal_0$ thus we can restrict the local solution on the hyperboloid $\Hcal_2$ and take this restriction as initial data of \eqref{eq main}.
\end{remark}

%semi-hyperboloidal frame:
%$$
%\delu_0: = \del_t,\quad \delu_a: = \frac{x^a}{t}\del_t + \del_a.
%$$

\subsection{Sketch of the proof}
Now we give a sketch of the strategy of this article.

The proof of the main theorem \ref{thm main} relays on the standard bootstrap argument. We suppose that on a (hyperbolic) time interval the local solution satisfies some energy bounds (up to certain order of derivatives), then based on these bounds we will get sharper bounds for the same energies (which do not depend on the time interval). Then by continuity argument, we conclude that the maximal interval in which the local solation satisfies those energy bounds extends to its maximal interval of existence. Then by local theory of wave system, we conclude that the local solution extends to time infinity.

In the following discussion of this subsection, for simplicity, $u$ and $v$ may refer to $\del^IL^Ju$ and $\del^IL^J v$, $\del u, \del v$ may refer to $\del^IL^J \del u, \del^IL^J\del v$.

The key point is based on the following observation. In the $2+1$ space-time:
\begin{equation}\label{eq 1 decompo-w}
\Box u =  (t-r)^{-1/2}t^{-1/2}\left((s/t)^2\del_t + 2(x^a/t)\delu_a\right)\left((t-r)^{1/2}t^{1/2}\del_tu\right) - \sum_a\delu_a\delu_au + \frac{t-r}{2t^2}\left(2+3r/t\right)\del_tu.
\end{equation}
%\marginpar{verified}
Thus
\begin{equation}\label{eq 2 decompo-w}
\mathcal{L}w + \frac{t-r}{2t^2}\left(2+(3r/t)\right)w = t^{1/2}(t-r)^{1/2}\left(\Box u + \sum_{a}\delu_a\delu_au\right)
\end{equation}
%\marginpar{verified}
where $w := t^{1/2}(t-r)^{1/2}\del_tu$, $\mathcal{L}:=(s/t)^2\del_t + 2(x^a/t)\delu_a = \left(1+(s/t)^2\right)\del_t + 2r\del_r$ (The precise definition of the symbols in the above equation will be given in the following sections). We remark that \eqref{eq 2 decompo-w} can be seen as an ODE along the integral curve of $\mathcal{L}$. What is important is that in the left-hand-side there is a positive potential $\frac{t-r}{2t^2}\left(2+(3r/t)\right)$ which leads to more decreasing rate. More precisely, let us consider the following ODE:
\begin{equation}\label{eq 1 ODE-model}
y' + A(t) y = B(t)
\end{equation}
thus
\begin{equation}\label{eq 2 ODE-model}
y(t) = y(t_0)e^{-\int_{t_0}^tA(\eta)d\eta} + \int_{t_0}^tB(\tau)e^{-\int_\tau^tA(\eta)d\eta}d\tau.
\end{equation}
We see that when $A(t)> 0$, $e^{-\int_\tau^tA(\eta)d\eta}$ becomes a decreasing factor, which relaxes the demand on the bound of $B(t)$.

Back to \eqref{eq 2 decompo-w}, on a fixed integral curve of $\mathcal{L}$, the role of $B(t)$ in \eqref{eq 1 ODE-model} is played by
$$
t^{1/2}(t-r)^{1/2}\left(\Box u + \sum_{a}\delu_a\delu_au\right)
$$
and
$$
\frac{t-r}{2t^2}\left(2+(3r/t)\right)
$$
stands for $A(t)$, which is equivalent to $(s/t)^2t^{-1}$. So in order to get uniform bound of $w = (t-r)^{1/2}t^{1/2}u \sim su$, we need in \eqref{eq 2 ODE-model}
$$
B(t)\lesssim (s/t)^2t^{-1},
$$
which means on a fixed integral curve of $\mathcal{L}$, we must have
$$
\Box u\lesssim (s/t)t^{-2},\quad \delu_a\delu_a u\lesssim (s/t)t^{-2}.
$$
which demands
\begin{equation}\label{eq 1 source-KG}
 v\left(A_1^{\alpha}\del_{\alpha}v + R v\right) + Q^{\alpha\beta}\del_{\alpha}v\del_{\beta}v\lesssim (s/t)t^{-2}
\end{equation}
and
\begin{equation}\label{eq 1 hessian-w}
\delu_a\delu_a u\lesssim (s/t)t^{-2}.
\end{equation}
But neither of them can be deduced from the bootstrap energy bounds combined with the Klainerman-Sobolev inequality. Other techniques are demanded.

\eqref{eq 1 source-KG} is based on the following observation (see also Katayama \cite{Ka} a similar case for $3+1$ dimension). Let $w:= v- P^{\alpha\beta}\del_{\alpha}u\del_{\beta}u$,
\begin{equation}\label{eq 1 KG-infty}
\Box w + w = -P^{\alpha\beta}\left(2\del_{\alpha}\Box u\cdot\del_\beta u + m^{\alpha'\beta'}\del_{\alpha'}\del_{\alpha}u\del_{\beta'}\del_{\beta}u\right)
\end{equation}
%\marginpar{finial verification till here 1:38 14-12-2017}
where the right-hand-side is linear combination of trilinear terms (because $\Box u =\text{quadratic terms}$) and $m^{\alpha'\beta'}\del_{\alpha'}\del_{\alpha}u\del_{\beta'}\del_{\beta}u$. The last one contain the hessian form of $u$, which, as explained in \cite{M1}, has better $L^{\infty}$ and energy bounds.  Then we rely on a technique developed in \cite{Kl2} (based on the hyperbolic decomposition of the Klein-Gordon operator) which leads  to \eqref{eq 1 source-KG}.

For \eqref{eq 1 hessian-w}, our strategy is to write the following identity:
$$
\delu_a\delu_a u  = t^{-2}L_aL_au -t^{-2}(x^a/t)L_au
$$
where $x^aL_au$ is not a summation. So if we arrive at the bound
\begin{equation}\label{eq 2 hessian-w}
L_a u \sim s/t,\quad  \quad L_aL_au\sim s/t
\end{equation}
then \eqref{eq 1 hessian-w} is guaranteed. But \eqref{eq 2 hessian-w} is still difficult. To get this bound, we need the following estimate:
\begin{equation}\label{eq 1 w-Kirchhoff}
\left.
\aligned
&|\Box u|\lesssim t^{-2}
\\
&u|_{\Hcal_2} ,\del_t u|_{\Hcal_2} \text{ being } C_c^{\infty}
\endaligned
\right\}
\Rightarrow |u| \lesssim s/t
\end{equation}
where the bound $|\Box u|\lesssim t^{-2}$ is guaranteed by \eqref{eq 1 source-KG}.

\eqref{eq 1 w-Kirchhoff} relays on the Kirchhoff's formula in two space dimension:
$$
\Box u = f,\quad u|_{t=2} = \del_tu|_{t=2} = 0
$$
then
$$
u(t,x) = \frac{1}{2\pi}\int_{t_0}^t\int_{|y|<t-\tau}\frac{f(\tau,x+y)}{\sqrt{(t-\tau)^2 - |y|^2}}dyd\tau.
$$
The strategy is to substitute the decay of $f$ into the above formula and get \eqref{eq 1 w-Kirchhoff}.

Now we describe the structure of this article. In section \ref{sec tools}, we recall the necessary technical tools. We will only give a sketch (for detailed explanation, see \cite{LM1} or \cite{M1}). In section \ref{sec semi-hyperbolicw}, we establish the main technical tool based on \eqref{eq 1 decompo-w}, this estimate will give the necessary decay estimate on $\del^IL^J\del u$. In section \ref{sec Kirchhoff-f}, we establish the estimate on wave component (not its gradient). This is for \eqref{eq 1 hessian-w}. In section \ref{sec other-tools} we recall the estimate on Hessian form of the wave component. Then in section \ref{sec bootstrap}, we initialize the bootstrap argument and establish some basic $L^2$ and $L^{\infty}$ bounds. In section \ref{sec hessian} we make use of the structure of the equations in order to obtain some refined $L^{\infty}$ bounds on the hessian form of the wave component. Section \ref{sec KG-infty} is devoted to the refined decay estimate on Klein-Gordon component deduced from a hyperbolic decomposition of the wave operator. Then in section \ref{sec w-infty}, we apply the observation in section \ref{sec Kirchhoff-f} and get a sufficiently sharp $L^{\infty}$ bound on the wave component. In section \ref{sec sharp-dw}, we establish the sharp decay  bound on the gradient of wave component by the techniques developed in section \ref{sec semi-hyperbolicw}. Then in the final section we conclude the bootstrap argument.

\section{Basic tools}\label{sec tools}
In this section we recall some already-established notation and results. They are explained in detail in \cite{M1}.
\subsection{The semi-hyperboloidal foliation}
In $\Kcal = \{t>|x|+1\}$, we introduce the following vector fields:
$$
\delu_0:=\del_t,\quad \delu_a:= \frac{x^a}{t}\del_t+\del_a.
$$
These vector fields forms a frame in $\Kcal$, called the semi-hyperboloidal frame. The transition matrices with respect to the natural read as:
$$
\big(\Phiu_{\alpha}^{\beta}\big)_{\beta\alpha}
=
\left(
\aligned
&1 &&0 &&&0
\\
&x^1/t &&1 &&&0
\\
&x^2/t &&0 &&&1
\endaligned
\right),
\qquad
\qquad
\big(\Psiu_{\alpha}^{\beta}\big)_{\beta\alpha}
=
\left(
\aligned
&1 &&0 &&&0
\\
-&x^1/t &&1 &&&0
\\
-&x^2/t &&0 &&&1
\endaligned
\right).
$$
with
$$
\delu_\alpha = \Phiu_{\alpha}^{\alpha'}\del_{\alpha'},
\qquad
\del_\alpha = \Psiu_{\alpha}^{\alpha'}\delu_{\alpha'}.
$$
For a two-tensor $T = T^{\alpha\beta}\del_{\alpha}\otimes\del_{\beta}$, we recall its presentation in different frames:
$$
T = T^{\alpha\beta}\del_{\alpha}\otimes\del_{\beta} = \Tu^{\alpha\beta}\delu_{\alpha}\otimes\delu_{\beta}
$$
with
$$
\Tu^{\alpha\beta} = \Psiu_{\alpha'}^{\alpha}\Psiu_{\beta'}^{\beta}T^{\alpha'\beta'},\quad
T^{\alpha\beta} = \Phiu_{\alpha'}^{\alpha}\Phiu_{\beta'}^{\beta}\Tu^{\alpha'\beta'}.
$$
Similar notation holds for three-tensor:
$$
P = P^{\alpha\beta\gamma}\del_{\alpha}\otimes\del_{\beta}\otimes\del_{\gamma} =  \Pu^{\alpha\beta\gamma}\delu_{\alpha}\otimes\delu_{\beta}\otimes\delu_{\gamma}
$$
and we also have similar relation of transition.
$$
\Pu^{\alpha\beta\gamma} = \Psiu_{\alpha'}^{\alpha}\Psiu_{\beta'}^{\beta}\Psiu_{\gamma'}^{\gamma}P^{\alpha'\beta'\gamma'},\quad
P^{\alpha\beta\gamma} = \Phiu_{\alpha'}^{\alpha}\Phiu_{\beta'}^{\beta}\Phiu_{\gamma'}^{\gamma}\Pu^{\alpha'\beta'\gamma'}.
$$
The functions $\Psiu_{\alpha}^{\beta}$ are smooth homogeneous functions defined in $\Kcal$ (see in detail in the following subsections). The following bounds in $\Kcal$ is guaranteed by lemma \ref{lem 1 homo}:
\begin{equation}\label{eq 1 homogeneous}
|\del^IL^J \Psiu_{\alpha}^{\beta}| + \leq C(N)t^{-|I|},\quad |I|+|J|\leq N.
\end{equation}
So we observe that
$$
|\del^IL^J\Tu^{\alpha\beta}| + |\del^IL^J\Pu^{\alpha\beta\gamma}|\leq C(N)t^{-|I|}.
$$

%Furthermore, if $N$ is a null from, i.e, \eqref{eq 1 null} holds.  Then (see \cite{M1}, lemma 5.1):
%\begin{equation}\label{eq 2 null}
%|\del^IL^J\Pu^{00}| \leq C(N)(s/t)^2,\quad \forall \,|I|+|J|\leq N.
%\end{equation}

\subsection{The energy estimate}
We recall the energy estimate on hyperboloids. Recall that
$$
\aligned
E_{c^2}(s,u):=&\int_{\Hcal_s}\big(|\del_tu|^2+\sum_a|\del_au|^2 + 2(x^a/t)\del_tu\del_au + c^2u^2 \big)dx
\\
=&\int_{\Hcal_s}\big(\sum_a |\delu_a|^2 + |(s/t)\del_tu|^2 + c^2u^2\big)dx
\\
=&\int_{\Hcal_s}\big(|\delu_{\perp}u|^2 + \sum_a|(s/t)\del_a u|^2 + \sum_{a<b}\big|t^{-1}\Omega_{ab}u\big|^2 + c^2u^2\big) dx.
\endaligned
$$
with
$$
\Omega_{ab} := x^a\del_b-x^b\del_a,\quad \delu_{\perp} := \del_t + \frac{x^a}{t}\del_a.
$$
Then we state the following energy estimate, which is a special case of proposition 7.1 of \cite{M1}.
\begin{proposition}[Energy estimate]\label{prop energy}
Let $u$ be a $C^2$ function defined in $\Kcal_{[s_0,s_1]}$ and vanishes near the conical boundary $\del\Kcal := \{t=r+1\}$. Suppose that
$$
\Box u + c^2u = f.
$$
Then for $s_0\leq s\leq s_1$, the following estimate holds:
\begin{equation}\label{eq 1 energy}
E_{c^2}(s,u)^{1/2}\leq E_{c^2}(s_0,u)^{1/2} + \int_{s_0}^s \|f\|_{L^2(\Hcal_\tau)}d\tau
\end{equation}
\end{proposition}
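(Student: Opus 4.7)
The plan is the classical multiplier method adapted to the hyperboloidal foliation. First I would multiply the equation $\Box u + c^2 u = f$ by the time-derivative $\del_t u$ and rearrange, producing the divergence identity $\del_\alpha V^\alpha = (\del_t u)\,f$ for the energy-momentum current
$$V^0 = \frac{1}{2}\left((\del_t u)^2 + \sum_a (\del_a u)^2 + c^2 u^2\right), \qquad V^a = -\del_t u\,\del_a u.$$
This is just the usual current for the linear Klein-Gordon operator on $\RR^{2+1}$ and carries no subtlety in itself.

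Next I would integrate this identity over the slab $\Kcal_{[s_0,s]}$. Its boundary decomposes into the two hyperboloids $\Hcal_s$ and $\Hcal_{s_0}$ plus a piece of the conical surface $\{t=r+1\}$ which contributes nothing thanks to the support hypothesis on $u$. Writing each hyperboloid as the graph $t = \sqrt{\tau^2 + r^2}$, the outward unit normal times the induced surface measure reduces to $(1,-x^a/t)\,dx$ on $\Hcal_s$ (the negative on $\Hcal_{s_0}$); so the flux through $\Hcal_s$ equals $\int_{\RR^2}(V^0 - (x^a/t)V^a)\,dx$, and inserting the explicit $V^{\alpha}$ one recovers exactly $\tfrac12 E_{c^2}(s,u)$ as defined just before the proposition. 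The divergence theorem then gives
$$\frac12 E_{c^2}(s,u) - \frac12 E_{c^2}(s_0,u) = \int_{\Kcal_{[s_0,s]}} (\del_t u)\,f\, dt\, dx.$$

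Finally I would foliate the bulk integral by the hyperboloids themselves: parameterising $\Kcal_{[s_0,s]}$ by $(\tau,x) \in [s_0,s]\times \RR^2$ with $t=\sqrt{\tau^2+r^2}$ gives $dt\,dx = (\tau/t)\,d\tau\,dx$, hence
$$\int_{\Kcal_{[s_0,s]}}(\del_t u)\,f\,dt\,dx = \int_{s_0}^s\!\int_{\Hcal_\tau}(\tau/t)(\del_t u)\,f\,dx\,d\tau.$$
The alternative representation of $E_{c^2}$ recorded above yields $\|(\tau/t)\del_t u\|_{L^2(\Hcal_\tau)} \leq E_{c^2}(\tau,u)^{1/2}$, and Cauchy-Schwarz then bounds the inner integral by $E_{c^2}(\tau,u)^{1/2}\|f\|_{L^2(\Hcal_\tau)}$. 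Combining everything gives the integral inequality $E_{c^2}(s,u) \leq E_{c^2}(s_0,u) + 2\int_{s_0}^s E_{c^2}(\tau,u)^{1/2}\|f\|_{L^2(\Hcal_\tau)}\,d\tau$, from which the announced bound follows by differentiating $E_{c^2}(\cdot,u)^{1/2}$ in $s$ (or applying a standard Gr\"onwall-type lemma). The only genuinely delicate step is the hyperboloidal flux computation; everything else is routine once one notices that the energy density is manifestly nonnegative in the semi-hyperboloidal frame $(\delu_a,(s/t)\del_t)$, which is precisely what furnishes the $\|(\tau/t)\del_t u\|_{L^2} \le E_{c^2}^{1/2}$ control used above.
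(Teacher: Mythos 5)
Your proposal follows essentially the same argument as the paper's sketch: multiplying by $\del_t u$ to obtain the divergence identity, integrating over $\Kcal_{[s_0,s]}$ with the flux through the hyperboloids computed via $n\,d\sigma = (1,-x^a/t)\,dx$, changing variables in the bulk integral to pick up the $(\tau/t)$ factor, applying Cauchy--Schwarz, and closing with a Gr\"onwall/differentiation argument on $E_{c^2}(\cdot,u)^{1/2}$. The details you supply (the explicit current $V^\alpha$, the reduction $n\,d\sigma=(1,-x^a/t)\,dx$) match the paper's computation exactly, so this is the same proof.
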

\begin{proof}[Sketch of proof]
We remark that
$$
\del_tu\cdot \Box u = \frac{1}{2}\del_t\left(|\del_t u|^2 + \sum_a|\del_au|^2 + c^2u^2\right) - \sum_a\del_a\left(\del_tu\del_au\right)
$$
Integrate this in the region $\Kcal_{[s_0,s]}$ and apply the Stokes formula. We remark that the unit normal vector field of $\Hcal_s$ (with respect to the Euclidian metric) is
$$
\left(\frac{1}{\sqrt{t^2+r^2}},\frac{-x^a}{\sqrt{t^2+r^2}}\right),
$$
and the volume element of $\Hcal_s$ is (with respect to the Euclidian norm) is
$$
d\sigma = \frac{\sqrt{t^2+r^2}}{t}dx,
$$
then
$$
E_{c^2}(s,u) - E_{c^2}(s_0,u) = 2\int_{\Kcal_{[s_0,s]}}\del_t u\cdot f dxdt = 2\int_{s_0}^s\int_{\Hcal_\tau}(s/t)\del_tu\cdot f dxd\tau
$$
Then we see that (derive the above identity with respect to $s$)
$$
\aligned
\frac{d}{ds}E_{c^2}(s,u) =& 2\int_{\Hcal_\tau}(s/t)\del_tu\cdot f dxd\leq 2\|(s/t)\del_tu\|_{L^2(\Hcal_s)}\cdot\|f\|_{L^2(\Hcal_s)}
\\
\leq& 2E_{c^2}(s,u)^{1/2}\cdot\|f\|_{L^2(\Hcal_s)}
\endaligned
$$
which leads to
$$
\frac{d}{ds}E_{c^2}(s,u)^{1/2}\leq \|f\|_{L^2(\Hcal_s)}
$$
which leads to the desired result.
\end{proof}

\subsection{The Klainerman-Sobolev inequalities}
To turn the $L^2$ bounds into $L^{\infty}$ bounds, we need the following Klainerman-Sobolev type inequality on hyperboloids. For the proof, see for example \cite{Ho1} or \cite{LM1}.
\begin{proposition}
Let $u$ be a $C^2$ function defined in $\Kcal[s_0,s_1]$ and vanishes near the conical boundary $\del\Kcal$. Then for $s_0\leq s\leq s_1$, the following inequality holds:
\begin{equation}\label{ineq 1 KS}
t|u(t,x)|\leq C\sum_{|I|+|J|\leq 2}\|\del^IL^Ju\|_{L^2(\Hcal_s)}
\end{equation}
where $C>0$ is a universal constant.
\end{proposition}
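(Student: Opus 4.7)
The plan is to reduce the inequality to the standard two-dimensional Sobolev embedding $W^{2,2}(\RR^2)\hookrightarrow L^\infty$ through a point-dependent rescaling adapted to $\Hcal_s$. Fix $(t_0,x_0)\in\Hcal_s$, so $t_0=\sqrt{s^2+|x_0|^2}$, and parametrize $\Hcal_s$ by its spatial projection by writing $\tilde u(x):=u\bigl(\sqrt{s^2+|x|^2},x\bigr)$. The algebraic identity that drives the whole argument is
$$
(L_a u)|_{\Hcal_s}=t\,\del_a\tilde u,
$$
obtained immediately from $\del_a\tilde u=\del_a u+(x^a/t)\del_t u$ and the definition $L_a=x^a\del_t+t\del_a$. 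I would then rescale around $x_0$ by setting $U(y):=\tilde u(x_0+t_0 y)$ for $|y|<1/2$, so that the half-unit ball in $y$ corresponds to the ball $|x-x_0|<t_0/2$ on $\Hcal_s$.

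Next, I would apply the usual Sobolev embedding in $\RR^2$ to $\chi U$, with $\chi$ a fixed smooth cutoff supported in $|y|<1/2$ and equal to one near the origin, obtaining $|U(0)|^2\lesssim\sum_{|\alpha|\leq 2}\|\del_y^\alpha U\|_{L^2(|y|<1/2)}^2$. Undoing the scaling through the chain rule $\del_y^\alpha U=t_0^{|\alpha|}\del_x^\alpha\tilde u$ and a change of variables in the $x$-integral yields
$$
t_0^{2}\,|u(t_0,x_0)|^2\lesssim\sum_{|\alpha|\leq 2}t_0^{2|\alpha|}\int_{|x-x_0|<t_0/2}|\del_x^\alpha\tilde u|^{2}\,dx.
$$
To recast this in terms of the Lorentz boosts, I would iterate the fundamental identity to get $t^{2}\del_a\del_b\tilde u=(L_aL_b u)|_{\Hcal_s}-(x^a/t)(L_b u)|_{\Hcal_s}$, and on the rescaled ball check that $t\sim t_0$ and $|x^a/t|\leq 1$, so that the weights $t_0^{|\alpha|}$ are absorbed into vector-field norms without loss. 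Summing over $|\alpha|\leq 2$ and taking square roots produces $t_0|u(t_0,x_0)|\lesssim\sum_{|J|\leq 2}\|L^J u\|_{L^2(\Hcal_s)}$, which is \emph{a fortiori} dominated by the full sum $\sum_{|I|+|J|\leq 2}\|\del^IL^Ju\|_{L^2(\Hcal_s)}$ appearing in the statement.

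The main obstacle, though essentially technical, will be verifying the comparability $t\sim t_0$ on the rescaled ball and handling the interaction with the conical boundary $\del\Kcal=\{t=r+1\}$. One splits into the two regimes $r_0\lesssim s$, where $t_0\sim s$ and the ball sits inside $\{r\lesssim s\}$, and $r_0\gtrsim s$, where $t_0\sim r_0$ and $r\sim r_0$ throughout the ball; in both cases the pulled-back region lies where $t\sim t_0$. If the ball extends past $\del\Kcal$, the hypothesis that $u$ vanishes near that boundary permits extension by zero, so the Sobolev step applies unchanged, with a constant $C$ independent of $s$, $t_0$, and $x_0$.
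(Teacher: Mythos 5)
Your argument is correct and is essentially the standard proof of the hyperboloidal Klainerman--Sobolev inequality found in the references the paper cites (H\"ormander's book and LeFloch--Ma): parametrize $\Hcal_s$ by its spatial projection, use the identity $(L_au)|_{\Hcal_s}=t\,\del_a\tilde u$, rescale around the base point by $t_0$, apply the flat two-dimensional Sobolev embedding, and absorb the weights $t_0^{|\alpha|}$ into the boost norms via $t\sim t_0$ on the rescaled ball and $|x^a/t|\leq 1$; the extension by zero across $\del\Kcal$ is legitimate because $u$ vanishes near that boundary. Note the paper itself does not reprove the inequality but refers to \cite{Ho1} and \cite{LM1}, and your proof reproduces that standard route (in fact proving the slightly stronger bound involving only $\|L^Ju\|_{L^2(\Hcal_s)}$, $|J|\leq 2$, which trivially implies the stated one).
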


\subsection{Homogeneous functions}
We recall the notion of homogeneous functions. Let $u$ be a function defined in $\RR^4$, sufficiently regular. $u$ is called to be homogeneous of degree $k\in\mathbb{Z}$, if the following two conditions hold:
\\
$\bullet$ $\del^I u(1,x)$ is bounded on the disc $|x|\leq 1$.
\\
$\bullet$ $u(\lambda t, \lambda x) = \lambda^ku(t,x), \forall \lambda>0$.

The following properties are direct and we omit the proof:
\begin{lemma}\label{lem 1 homo}
Suppose that $u,v$ are homogeneous of degree $k,l$ respectively. Then
\\
$-$ when $k=l$, $\alpha u+\beta v$ is homogeneous of degree $k$, where $\alpha,\beta$ are constants
\\
$-$ $uv$ is homogeneous of degree $k+l$,
\\
$-$ $\del^IL^J u$ is homogeneous of degree $k-|I|$,
\\
$-$ $|u|\leq Ct^k$.
\end{lemma}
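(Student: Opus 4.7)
The plan is to treat the four assertions in turn, each being an essentially direct consequence of the two defining conditions of homogeneity (boundedness of $\del^I u(1,x)$ on the unit disc, and the scaling identity $u(\lambda t,\lambda x)=\lambda^k u(t,x)$). The first two items are purely algebraic; the third requires a short computation distinguishing the effects of $\del_\alpha$ and $L_a$ on the scaling identity; the fourth follows from a single well-chosen specialization of the scaling parameter $\lambda$.

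For linearity (the case $k=l$) I would simply substitute $\alpha u+\beta v$ into the scaling identity; the boundedness of $\del^I(\alpha u+\beta v)(1,x)$ on $|x|\leq 1$ is inherited from each summand. For the product $uv$ the scaling relation gives $(uv)(\lambda t,\lambda x)=\lambda^{k+l}(uv)(t,x)$ at once, while the boundedness of $\del^I(uv)(1,x)$ on $|x|\leq 1$ follows from the Leibniz rule, splitting derivatives between the two factors and bounding each piece by hypothesis.

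For the derivative statement I would first dispose of the case $|I|+|J|=1$. Differentiating $u(\lambda t,\lambda x)=\lambda^k u(t,x)$ with respect to the appropriate coordinate yields $(\del_\alpha u)(\lambda t,\lambda x)=\lambda^{k-1}\del_\alpha u(t,x)$, so each $\del_\alpha$ lowers the degree by one. The key observation, which I expect to be the main (though modest) obstacle, is that the explicit factors $x^a$ and $t$ appearing in $L_a=x^a\del_t+t\del_a$ conspire with the chain-rule factor of $\lambda$ to preserve the degree: a direct computation gives $(L_a u)(\lambda t,\lambda x)=\lambda x^a(\del_t u)(\lambda t,\lambda x)+\lambda t(\del_a u)(\lambda t,\lambda x)=\lambda^k L_a u(t,x)$, so $L_a u$ is homogeneous of degree $k$. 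The general case $\del^IL^J u$ of degree $k-|I|$ then follows by induction on $|I|+|J|$. The boundedness condition at $t=1$ is maintained throughout, because each derivation produces a finite linear combination of products of polynomial coefficients in $(t,x)$ with derivatives of $u$, all of which remain bounded on the compact set $\{t=1,|x|\leq 1\}$ by assumption.

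Finally, for the pointwise bound I would apply the scaling identity with $\lambda=t$ to write $u(t,x)=t^k u(1,x/t)$ for any $(t,x)$ with $t>0$. On $\Kcal$ we have $|x|<t$, so $x/t$ lies in the closed unit disc, and the first defining condition (specialized to $I=0$) supplies a uniform bound $|u(1,y)|\leq C$ on $|y|\leq 1$. Combining these gives $|u(t,x)|\leq Ct^k$, as required. I do not anticipate any serious difficulty in the argument; the whole lemma is a packaging of the two axioms of homogeneity through the scaling identity.
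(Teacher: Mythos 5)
The paper itself offers no proof of this lemma; it states ``The following properties are direct and we omit the proof.'' Your argument correctly supplies the routine verification along the only natural lines: differentiating the scaling identity to see that $\del_\alpha$ lowers the degree while the explicit $x^a$ and $t$ coefficients in $L_a$ compensate the chain-rule factor of $\lambda$, and specializing $\lambda=t$ to obtain the pointwise bound. In particular the check that $(L_au)(\lambda t,\lambda x)=\lambda^k(L_au)(t,x)$ and the observation that the Leibniz rule preserves boundedness on the compact slice $\{t=1,\,|x|\leq 1\}$ are exactly what is needed, and I see no gaps.
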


We remark that $\Psiu_{\alpha}^{\beta}$ are homogeneous of degree zero, and $t^{-1}$ is homogeneous of degree $-1$.

Furthermore, we pay special attention on the function $s/t$.
\begin{lemma}\label{lem 2 homo}
Let $(I,J)$ be a pair of multi-indices. Then
$$
L^J(s/t) = \Lambda^J(s/t), \quad \del^I(s/t) = \sum_{I_1+I_2+\cdots+I_k=I\atop |I_j|\geq 1}\Lambda_k^{I}(s/t)^{1-2k},
$$
where $\Lambda^J$ is homogeneous of degree zero, $\Lambda^{I}_k$ are homogeneous of degree $-|I|$.

Furthermore, in $\Kcal$,
\begin{equation}\label{eq 1 lem 2 homo}
|\del^IL^J(s/t)|\leq \left\{
\aligned
&C(s/t),\quad |I|=0,
\\
&Cs^{-1},\quad |I|>0,
\endaligned
\right.
\end{equation}
where $C$ is a constant determined by $I,J$.
\end{lemma}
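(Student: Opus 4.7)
My plan is to prove the two identities by induction on the number of derivatives and then combine them to deduce the pointwise bounds by exploiting the geometry of $\Kcal$. For the first identity $L^J(s/t)=\Lambda^J(s/t)$ the base case $|J|=1$ is immediate: since $L_a(t^2-r^2)=2tx^a-2tx^a=0$ the boost $L_a$ is tangent to each hyperboloid, so $L_a s=0$; combined with $L_a t=x^a$ this gives $L_a(s/t)=-(x^a/t)(s/t)$, where $-x^a/t$ is homogeneous of degree zero. For the inductive step the Leibniz rule gives $L_bL^J(s/t)=(L_b\Lambda^J)(s/t)+\Lambda^J L_b(s/t)=[L_b\Lambda^J-(x^b/t)\Lambda^J](s/t)$, and the bracket is again homogeneous of degree zero because by Lemma \ref{lem 1 homo} the operator $L_b$ preserves the degree of a homogeneous function.

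For the second identity $\del^I(s/t)=\sum\Lambda_k^I(s/t)^{1-2k}$ the base case $|I|=1$ is direct: $\del_t(s/t)=(r^2/t^3)(s/t)^{-1}$ and $\del_a(s/t)=-(x^a/t^2)(s/t)^{-1}$, both of the claimed form with $k=1$ and coefficient homogeneous of degree $-1$. For the inductive step apply $\del_\beta$ to a generic summand $\Lambda_k^{I'}(s/t)^{1-2k}$: differentiating $\Lambda_k^{I'}$ produces a coefficient homogeneous of degree $-|I'|-1$ multiplied by $(s/t)^{1-2k}$, contributing to the $k$-th term with index $I'\cup\{\beta\}$; differentiating $(s/t)^{1-2k}$ yields $(1-2k)(s/t)^{-2k}\del_\beta(s/t)=(1-2k)\Lambda_1^{\{\beta\}}\Lambda_k^{I'}(s/t)^{1-2(k+1)}$, contributing to the $(k+1)$-th term. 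Both contributions have the claimed form.

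For the pointwise bounds, write $\del^IL^J(s/t)=\del^I(\Lambda^J(s/t))$ and expand by Leibniz:
$$
\del^IL^J(s/t)=\sum_{I_1+I_2=I}\binom{I}{I_1}(\del^{I_1}\Lambda^J)\,\del^{I_2}(s/t).
$$
When $|I|=0$ only $\Lambda^J(s/t)$ survives, giving $|L^J(s/t)|\leq C(s/t)$ by Lemma \ref{lem 1 homo}. When $|I|\geq 1$, each summand is controlled by $Ct^{-|I|}(s/t)^{1-2k}$ for some $0\leq k\leq|I|$ (with $k=0$ covering $I_2=0$). The decisive geometric fact is that in $\Kcal$ one has $s^2=(t-r)(t+r)\geq t$, since $t-r\geq 1$ and $t+r\geq t$; equivalently $s\geq t^{1/2}$. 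Therefore
$$
s\cdot t^{-|I|}(s/t)^{1-2k}=s^{2-2k}\,t^{2k-1-|I|}\leq t^{1-k}\cdot t^{2k-1-|I|}=t^{k-|I|}\leq 1
$$
for $k\geq 1$ (because $k\leq|I|$), and $s\cdot t^{-|I|}(s/t)=s^2t^{-|I|-1}\leq t^{1-|I|}\leq 1$ for $k=0$, so $|\del^IL^J(s/t)|\leq Cs^{-1}$. The main subtlety is not the Leibniz bookkeeping but recognizing that each derivative acting on $s/t$ generates both a coefficient of lower homogeneous degree and an extra factor of $(s/t)^{-1}$; tracking these two effects simultaneously is exactly what forces $k$ to appear as an independent parameter in the formula, and the pointwise bound then hinges crucially on the coercive lower bound $s^2\geq t$ in $\Kcal$ to absorb the growing negative powers of $s/t$.
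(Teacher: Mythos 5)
Your proof is correct and follows essentially the same route as the paper: the same base identity $L_a(s/t)=-(x^a/t)(s/t)$, the same Fa\`a di Bruno--type structure for $\del^I(s/t)$ (which you re-derive by hand rather than citing the formula), and the same crucial geometric input $s^2\geq t$ in $\Kcal$ to absorb negative powers of $s/t$. The only cosmetic difference is that the paper bounds the power count by the worst case $k=|I|$ before invoking $t\leq s^2$, while you treat each $k$ directly; both reduce to the same inequality.
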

\begin{proof}
We remark that $L_a(s/t) = -\frac{x^a}{t}(s/t)$ and $\frac{-x^a}{t}$ is homogeneous of degree zero. Then by induction the $L^J(s/t)$ can be easily calculated.

For $\del^I(s/t)$, we remark that
$$
s/t = (1-(r/t)^2)^{1/2}.
$$
We denote by $u = (s/t)^2 = 1-(r/t)^2$ and
$$
\aligned
f:\RR^+ &\longrightarrow \RR^+
\\
x&\longrightarrow x^{1/2}
\endaligned
$$
so $(s/t) = f(u)$ with $u$ a homogeneous function of degree zero. Then by Fa\`a di Bruno's identity, we remark that
$$
\del^I(f(u)) = \sum_{I_1+I_2+ \cdots +I_k=I}f^{(k)}(u)\cdot \del^{I_1}u\del^{I_2}u\cdots \del^{I_k}u,\quad |I_j|\geq 1, j=1,2,\cdots k.
$$
Recall that
$$
f^{(k)}(u) = C(k)u^{-k+1/2} = C(k)(s/t)^{1-2k}.
$$
Furthermore, we remark that $\del^{I_1}u\del^{I_2}u\cdots \del^{I_k}u$ is homogeneous of degree $-|I|$. So the decomposition on $\del^I(s/t)$ is established.

For the estimate, we observe that when $|I| =0$ it is direct. When $|J|=0$, we remark that
$$
|f^{(k)}(u)\cdot \del^{I_1}u\del^{I_2}u\cdots \del^{I_k}u|\leq C(k)(t/s)^{2k-1}t^{-|I|}.
$$
Recall that $k\leq |I|$ and $t/s\geq 1$, and the fact that $t\leq s^2$ in $\Kcal$, then
$$
|f^{(k)}(u)\cdot \del^{I_1}u\del^{I_2}u\cdots \del^{I_k}u|\leq C(|I|)t^{|I|-1}s^{-2|I|+1}\leq C(|I|)s^{-1}
$$
which leads to
\begin{equation}\label{eq 1 proof lem 2 homo}
|\del^I(s/t)|\leq C(|I|)s^{-1}.
\end{equation}

When $|I|>0$,
$$
\del^IL^J(s/t) = \del^I\left(\Lambda^J(s/t)\right) = \sum_{I_1+I_2=I}\del^{I_1}\Lambda^J\cdot \del^{I_2}(s/t).
$$
Now, remark that $\del^{I_1}\Lambda^J$ is homogeneous of degree $-|I_1|\leq 0$, thus bounded by $Ct^{-|I_1|}$. Then by \eqref{eq 1 proof lem 2 homo}, we remark that the desired result is established.
\end{proof}

\subsection{The commutators}\label{sec comm}
%\marginpar{: This section is to be re-verified}
In this section we will classify the high-order derivatives composed by $\{\del_{\alpha},\,\delu_\alpha,\,L_a\}$. We will list out some key decompositions of commutators established in \cite{LM1} chapter 3 and the give some estimates based on these decompositions.

\begin{lemma}\label{lem 1 decompo commu}
Let $u$ be a function defined in $\Kcal$, sufficiently regular. Let $(I,J)$ be a pair of multi-indices, then the following relations hold:
\begin{equation}\label{eq 1 comm}
[\del_{\alpha},L^J] = \sum_{\beta,|J'|<|J|}\Gamma_{\alpha J'}^{J\,\beta}\del_{\beta}L^{J'},
\end{equation}
\begin{equation}\label{eq 2 comm}
[\del^I,L^J] = \sum_{|I'|=|I|\atop|J'|<|J|}\Gamma_{I'J'}^{IL}\del^{I'}L^{J'}
\end{equation}
where $\Gamma_{\alpha J'}^{J\,\beta}$ and $\Gamma_{I'J'}^{IL}$ are constants.

%\begin{equation}\label{eq 3 comm}
%[\delu_a,L^J] = \sum_{b,|J'|<|J|}\Gammau_{aJ'}^{Jb} \delu_bL^{J'},
%\end{equation}
%\begin{equation}\label{eq 4 comm}
%[\delu_a,\del^I] = \sum_{|I'|=|I|-k,k\geq 0\atop }\Deltau_{I'}^{I}\del^{I'}L_a - t^{-1}\sum_{|I'|=|I|}\Gamma_{aI'}^{I}\del^{I'}
%\end{equation}
%where $\Deltau_{I'}^I$ are homogeneous of degree $(-k-1)$ and $\Gamma_{aI'}^I$ are constants.
\end{lemma}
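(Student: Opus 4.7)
The plan is to prove both identities by induction, treating the first as the base layer and then bootstrapping to the second.

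For \eqref{eq 1 comm}, I would proceed by induction on $|J|$. The base case $|J|=1$ amounts to the direct calculations
\[
[\del_t,L_a] = [\del_t, x^a\del_t + t\del_a] = \del_a,\qquad [\del_b,L_a] = [\del_b, x^a\del_t + t\del_a] = \delta_{ab}\del_t,
\]
both of which have the desired form with constant coefficients and $|J'|=0<1$. For the inductive step, write $L^J = L_a L^{J''}$ with $|J''|=|J|-1$ and use the Leibniz-type commutator identity
\[
[\del_\alpha, L^J] = [\del_\alpha, L_a]\,L^{J''} + L_a\,[\del_\alpha, L^{J''}].
\]
The first piece is already in the stated form by the base case. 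For the second, the induction hypothesis produces terms $\Gamma^{J''\,\beta}_{\alpha J'}\, L_a\del_\beta L^{J'}$ with $|J'|<|J''|$, and I would commute $L_a$ back past $\del_\beta$ by $L_a\del_\beta = \del_\beta L_a - [\del_\beta, L_a]$, invoking the base case once more to express $[\del_\beta, L_a]$ as a constant-coefficient combination of $\del_\gamma$. The resulting terms have the form $\del_\gamma L^{J'''}$ with $|J'''|\leq |J'|+1 \leq |J''| < |J|$, so the form \eqref{eq 1 comm} is preserved.

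For \eqref{eq 2 comm}, I would induct on $|I|$. The case $|I|=1$ is exactly \eqref{eq 1 comm}. For the step, write $\del^I = \del_\alpha \del^{I''}$ with $|I''|=|I|-1$ and split
\[
[\del^I, L^J] = \del_\alpha\,[\del^{I''}, L^J] + [\del_\alpha, L^J]\,\del^{I''}.
\]
The first summand, by the induction hypothesis, is $\del_\alpha$ times a sum over terms $\Gamma\,\del^{I'}L^{J'}$ with $|I'|=|I''|$ and $|J'|<|J|$, hence already of the stated form with the total order of $\del$'s equal to $|I|$. For the second summand, \eqref{eq 1 comm} gives a sum of $\Gamma\,\del_\beta L^{J'}\,\del^{I''}$ with $|J'|<|J|$. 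To bring this into the stated form I would push $L^{J'}$ to the right of $\del^{I''}$ using $L^{J'}\del^{I''} = \del^{I''}L^{J'} - [\del^{I''}, L^{J'}]$; the commutator here is covered by the induction hypothesis applied to $(I'', J')$ and returns further terms of the same shape, each with the order of boosts strictly below $|J'|<|J|$ and the derivative order equal to $|I''|$. Prepending the stray $\del_\beta$ yields the required $|I'|=|I|$.

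The only non-routine point is bookkeeping. One must verify throughout that (a) the derivative multi-index length is preserved at $|I|$, not merely bounded, and (b) the boost multi-index length strictly decreases at each reduction, so that the inductions actually terminate and the coefficients $\Gamma$ remain independent of $u$ (in particular constant, since the base commutators $[\del_\alpha, L_a]$ yield constants). I expect no analytic obstacle; the main care is simply to organize the double induction cleanly so that each commutator that appears during the reduction has strictly smaller $|J|$, ensuring the induction hypothesis is applicable at every stage.
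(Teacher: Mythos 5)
Your proposal is correct. A small note for context: the paper itself does not supply a proof of this lemma; it simply lists it as one of the commutator decompositions established in [LM1, Chapter~3]. So there is no in-paper argument to compare against. Your double induction (first on $|J|$ for \eqref{eq 1 comm} via $[\del_\alpha, L_a L^{J''}]=[\del_\alpha,L_a]L^{J''}+L_a[\del_\alpha,L^{J''}]$, then on $|I|$ for \eqref{eq 2 comm} via $[\del_\alpha\del^{I''},L^J]=\del_\alpha[\del^{I''},L^J]+[\del_\alpha,L^J]\del^{I''}$) is the standard and correct route, and the base computations $[\del_t,L_a]=\del_a$, $[\del_b,L_a]=\delta_{ab}\del_t$ are exactly what drives it. Your bookkeeping observations, that the $\del$-order is preserved (not merely bounded) and the $L$-order strictly drops at each reduction so the coefficients stay constant, are the right points to stress; they are what make the lemma useful for the estimates \eqref{ineq 1.1 comm}--\eqref{ineq 2.2 comm} that follow.
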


Now we introduce the following notation: let $Z^I u$ be an $N-$order derivative acting on $u$ with the order of the multi-index $|I|=N$ and $Z$ represents one of the following derivative: $\del_{\alpha}$ (partial derivatives), $\delu_a$ (tangent derivatives) and $L_a$ (boosts). Suppose that $I = (I_1,I_2,\cdots I_N)$, $I_j\in\{0,1,2,3,4,5,6,7,8,9\}$ with
$$
Z_{I_j} = \left\{
\aligned
&\del_{I_j}, \quad I_j = 0,1,2,3
\\
&L_{I_j-3},\quad I_j = 4,5,6,
\\
&\delu_{I_j-6},\quad I_j = 7,8,9.
\endaligned
\right.
$$
Then we are going to classify the $N-$order operators $\{Z^I\}$ into four types:
\begin{itemize}
\item {\bf 0.} in $Z^I$ all the derivatives are $L_a$, which is equivalent to demand $\forall 4\leq I_j\leq 6$.
\\
\item {\bf 1.} in $Z^I$ all derivatives are $\del_{\alpha}$ and $L_a$ ($I_j\leq 6$). If it contains $(1+d)$ partial derivatives and $l$ boosts $L_a$ ($d\geq 0, l+d+1=|I|$), then by \eqref{eq 1 comm} and \eqref{eq 2 comm}
\begin{equation}\label{ineq 1.1 comm}
|Z^Iu|\leq C\sum_{\alpha,|I'|=d\atop |J'|\leq l}|\del_{\alpha}\del^{I'}L^{J'}u|.
\end{equation}
\\
\item {\bf 2.} in $Z^I$ there are $(1+b)$ tangent derivatives $\delu_a$, $l$ boosts and $d$ partial derivatives with $l,d,b\geq 0$. In this case we make the following observation:
    $$
    \delu_a u = t^{-1}L_au
    $$
    and $t^{-1}$ is homogeneous of degree $-1$. Now suppose that $I = (I_1,I_2,\cdots I_N)$, and suppose that $I_{k_1}$ is the first element which is strictly large than $6$ (or equivalently saying, $Z_{I_{k_1}}$ is the first tangent derivative). Then we can write
    $$
    Z^I = Z^{J}\delu_aZ^{K}, \quad J = (I_1,\cdots I_{k_1-1}), \quad K = (I_{k_1+1}\cdots I_N), \quad a = I_{k_1} - 6.
    $$
    Remark that $Z^J$ belongs to the type {\bf 1}. Then
    $$
    Z^Iu = Z^J(t^{-1}L_aZ^Ku) = \sum_{J_1+J_2=J}Z^{J_1}t^{-1}\cdot Z^{J_2}L_aZ^Ku
    $$
    where $Z^{J_1}$ and $Z^{J_2}$ are of type {\bf 1}. Remark that $Z^{J_1}t^{-1}$ is homogeneous of degree $\leq -1$. Thus we see
    $$
    \big|Z^{J_1}t^{-1}\cdot Z^{J_2}L_aZ^Ku\big|\leq Ct^{-1}\big|Z^{J_2}L_aZ^Ku\big|.
    $$
    If, in $Z^{J_2}L_aZ^K$ there is no tangent derivative $\delu_a$, we conclude that
    $$
    |Z^Iu|\leq Ct^{-1}\sum_{|K'|\leq |K|}\big|Z^Ku\big|
    $$
    with $Z^K$ belong to type {\bf 1}. If in $Z^{J_2}L_aZ^K$ still contains $\delu_a$ (of course it contains strictly less tangent derivatives than $Z^I$ does), we take this operator and repeat the above argument on $Z^I$. Because $Z^I$ contains finite many $\delu_a$, this procedure will terminate in finite many steps. Thus we conclude :
    \begin{equation}\label{ineq 2.1 comm}
    |Z^Iu|\leq C(|I|)t^{-k}\sum_{|I'|+|J'|\leq|I|}|\del^{I'}L^{J'}u|
    \end{equation}
    with $k$ the number of tangent derivatives contained in $Z^I$. When $k=1$, the above estimate can be written into the following form:
    \begin{equation}\label{ineq 2.2 comm}
    |Z^Iu|\leq C(|I|)\sum_{b \atop |I'|+|J'|\leq |I|-1}\big|\delu_b\del^{I'}L^{J'}u\big| + C(|I|)s^{-1}\sum_{|I'|\leq |I|}\big|(s/t)\del^{I'}u\big|.
    \end{equation}
%\marginpar{Verified}
%    $$
%    |Z^Iu|\leq C\sum_{|I'|+|J'|\leq n-1}|\delu_a\del^{I'}L^{J'}u|,
%    $$
%    $$
%    t^{-1}|\del_{\alpha}\del^IL^Ju|\leq s^{-1}|(s/t)\del_{\alpha}\del^IL^J u|.
%    $$
%    Thus in this case we see
%    \begin{equation}\label{ineq 2.1 comm}
%    |Z^Iu|\leq C\!\!\!\!\sum_{|I'|+|J'|\leq n-1}|\delu_a\del^{I'}L^{J'}u| +C s^{-1}\!\!\!\!\!\!\!\sum_{|I'|+|J'|\leq n-1}|(s/t)\del_{\alpha}\del^{I'}L^{J'}u|.
%    \end{equation}
%    Further more,
%    \begin{equation}\label{ineq 2.2 comm}
%    |Z^Iu|\leq Ct^{-1}\!\!\!\!\sum_{|I'|+|J'|\leq n}|\del^{I'}L^{J'}u| + Ct^{-1}\!\!\!\!\sum_{|I'|+|J'|\leq n}|\del_{\alpha}\del^{I'}L^{J'}u|.
%    \end{equation}
%\\
%\item {\bf 3.} in $Z^I$ there are at least two $\del_{\alpha}$. By \eqref{}
%\item {\bf 3.} in $Z^I$ there are at least one $\delu_a$ and one $\del_{\alpha}$, or one $\delu_a$ and one $\delu_{\alpha}$. In this case the following estimates hold:
%    $$
%    |Z^Iu|\leq Ct^{-1}\sum_{\alpha,|I'|+|J'|\leq N-2}|\del^{I'}L^{J'}\del_{\alpha}u|
%    $$
%    and
%    $$
%    |Z^Iu|\leq Ct^{-1}\sum_{\alpha,|I'|+|J'|\leq N-2}|\del^{I'}L^{J'}\del_{\alpha}u|
%    $$
\end{itemize}

%\subsection{Homogeneous coefficients}
%We recall the notion of homogeneous coefficients introduced in \cite{LM1}. These are the $C^{\infty}$ functions defined in $\{t>r\}$, bounded on $\Hcal_2\cap\Kcal$ and satisfies the following condition:
%$$
%f(\lambda t,\lambda x) = \lambda^{\alpha}f(t,x),\quad \forall (t,x)\in \Kcal, \quad \lambda>0.
%$$
%Then $f$ is called a homogeneous coefficient of degree $0$.
%Then the following bounds hold:
%\begin{equation}\label{eq 1 homo}
%|\del^IL^J f(t,x)|\leq C(N)t^{-|I|+\alpha}.
%\end{equation}
%We remark that the coefficients of $\Phiu$ and $\Psiu$ are homogeneous of degree $0$, because these are of form $x^a/t$.

\section{The semi-hyperboloidal decomposition of the wave operator}\label{sec semi-hyperbolicw}

\subsection{The semi-hyperboloidal decomposition}
We recall the semi-hyperboloidal frame defined in $\Kcal$:
$$
\delu_0 := \del_t,\quad \delu_a := \frac{x^a}{t}\del_t + \del_a.
$$
Then we write
$$
\Box u = \left((s/t)^2\del_t + 2(x^a/t)\delu_a\right)\del_tu - \sum_a\delu_a\delu_au + t^{-1}\left(2-(r/t)^2\right)\del_tu
$$
which leads to
\begin{equation}\label{eq 3 decompo-w}
\Box u =  t^{-1/2}\left((s/t)^2\del_t + 2(x^a/t)\delu_a\right)\left(t^{1/2}\del_t\right) - \sum_a\delu_a\delu_a + \frac{3}{2t}(s/t)^2\del_t.
\end{equation}
%\marginpar{verified}
Then we see \eqref{eq 1 decompo-w}:
$$
\aligned
\Box u =&  (t-r)^{-1/2}t^{-1/2}\left((s/t)^2\del_t + 2(x^a/t)\delu_a\right)\left((t-r)^{1/2}t^{1/2}\del_tu\right) + \frac{t-r}{2t^2}\left(2+3r/t\right)\del_tu
\\
 &- \sum_a\delu_a\delu_au.
\endaligned
$$
In the above equations $u$ is supposed to be sufficiently regular. Now as we explained in section \ref{sec intro}, we denote by
$$
w:= (t-r)^{1/2}t^{1/2}\del_t u
$$
and
$$
\mathcal{L} := (s/t)^2\del_t + 2(x^a/t)\delu_a = \left(1+(r/t)^2\right)\del_t + 2(r/t)\del_r.
$$
Then \eqref{eq 1 decompo-w} can be written into the following form:
\begin{equation}\label{eq 5 decompo-w}
\mathcal{L} w + \frac{t-r}{t^2}\left(2+3r/t\right)w = (t-r)^{1/2}t^{1/2}\left(\Box u + \sum_a\delu_a\delu_au\right).
\end{equation}
%\marginpar{verified}
In the following subsection we will establish the $L^{\infty}$ estimate on $\del_t u$ based on the above identity.

\subsection{The $L^{\infty}-L^{\infty}$ estimate on $\del u$}
We write \eqref{eq 5 decompo-w} into the following form:
\begin{equation}\label{eq 1 Linfty-dw}
L w + \frac{t-r}{2t^2}\cdot\frac{2+(3r/t)}{1+(r/t)^2}w = \left(1+(r/t)^2\right)^{-1}(t-r)^{1/2}t^{1/2}\left(\Box u + \sum_a\delu_a\delu_au\right).
\end{equation}
where
$$
L:=\del_t + \frac{2rt}{t^2+r^2}\del_r
$$
Then we denote by $\gamma(t;t_0,x_0)$ the integral curve of $L$ passing $(t_0,x_0)$ when $t = t_0$. We define $w_{t_0,x_0}(t): = w(\gamma(t;t_0,x_0))$ which is the restriction of $w$ on $\gamma(t;t_0,x_0)$. Then we see that \eqref{eq 1 Linfty-dw} is written as
\begin{equation}\label{eq 2 linfty-dw}
w_{t_0,x_0}' + P(t;t_0,x_0)w_{t_0,x_0}
= \left(1+(r/t)^2\right)^{-1}(t-r)^{1/2}t^{1/2}\left(\Box u + \sum_a\delu_a\delu_au\right)\bigg|_{\gamma(t;t_0,x_0)}
\end{equation}
with
$$
P(t;t_0,x_0):= \frac{t-r}{2t^2}\cdot\frac{2+(3r/t)}{1+(r/t)^2}\bigg|_{\gamma(t;t_0,x_0)}\geq \frac{t-r}{t^2}\bigg|_{\gamma(t;t_0,x_0)}.
$$
%\marginpar{Verified}
To estimate $w_{t_0,x_0}$, our strategy is to integrate the above ODE along $\gamma$ from the point where $\gamma$ meets the boundary of $\Kcal_{[2,s_0]}$ (where $s_0 = \sqrt{t_0^2-r_0^2}$) to $t_0$. So we need to guarantee that if we start from $(t_0,x_0)$ and go backward along $\gamma(t,t_0,x_0)$, then we must meet the conical boundary $\del \Kcal = \{(t,x)|t = r+1\}$ or the initial slice $\Hcal_2$; furthermore, the arc limited by these two points is contained in the region $\Kcal_{[2,s_0]}$.  This is concluded in the following lemma:
\begin{lemma}\label{lem 1 decompo-w}
Let $L$ be the vector field defined in the region $\Kcal$:
$$
L:= \del_t + \frac{2rt}{t^2+r^2}\del_r.
$$
Suppose $(t_0,x_0)\in \Kcal_{[2,+\infty)}$ and denote by $\gamma (t;t_0,x_0)$ the integral curve passing $(t_0,x_0)$ with $\gamma(t_0;t_0,x_0) = (t_0,x_0)$. Then for each point $(t_0,x_0)\in \Kcal_{[2,+\infty)}$, there exists a $\tau_0$ such that
$$
2\leq \tau_0<t_0,\quad \gamma(\tau_0;t_0,,x_0)\in \{(t,x)|t=r+1\}\cup\Hcal_2,
$$
and $\forall \tau_0\leq t\leq t_0$
$$
\gamma(t;t_0,x_0)\in \Kcal_{[2,s_0]}, \quad s_0 = \sqrt{t_0^2-r_0^2}.
$$
\end{lemma}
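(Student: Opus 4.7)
The plan is to identify a conserved quantity along integral curves of $L$, use it to obtain an explicit parametrization, and then read off the claim from two simple monotonicity statements. The key observation is that $L\bigl(r/(t^2-r^2)\bigr) = 0$: indeed, since $Lt = 1$ and $Lr = 2rt/(t^2+r^2)$, one computes $L(t^2-r^2) = 2t - 4r^2 t/(t^2+r^2) = 2t(t^2-r^2)/(t^2+r^2)$, and the two contributions in $L\bigl(r/(t^2-r^2)\bigr)$ cancel exactly. Thus along $\gamma(\cdot;t_0,x_0)$ the ratio $C := r/s^2$, with $s^2 := t^2 - r^2$, is constant and equal to $r_0/s_0^2$.

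This conservation law allows $\gamma$ to be parametrized by $s$ alone: $r(s) = Cs^2$ and $t(s) = \sqrt{s^2+r(s)^2} = s\sqrt{1+C^2 s^2}$, both smooth and strictly increasing for $s > 0$, so decreasing $t$ along $\gamma$ corresponds to decreasing $s$. I would then verify two monotonicity facts: $t(s) \geq s$ (immediate), and $(t-r)(s) = s\sqrt{1+C^2 s^2} - Cs^2$ is strictly increasing in $s$, with $(t-r)(s) \to 0$ as $s \to 0^+$. The latter can be seen either from the direct identity $d(t-r)/dt = (t-r)^2/(t^2+r^2) > 0$ along $\gamma$, or by the elementary inequality $1+2a^2 > 2a\sqrt{1+a^2}$ applied with $a = Cs$.

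With these tools in hand the conclusion follows by a short case analysis. Since $s_0 \geq 2$ and $(t-r)(s_0) = t_0 - r_0 \geq 1$, the intermediate value theorem produces a unique $s_1 \in (0, s_0]$ with $(t-r)(s_1) = 1$. Setting $s^\star := \max(2, s_1)$ and $\tau_0 := t(s^\star)$, one has $\tau_0 \geq s^\star \geq 2$; moreover, at $s = s^\star$ either $s^\star = s_1$ (and the curve meets the conical boundary $\{t = r+1\}$) or $s^\star = 2$ (and the curve meets $\Hcal_2$). For every $s \in [s^\star, s_0]$ both $s \geq 2$ and $t-r \geq 1$ hold by the monotonicity of $t(s)$ and $(t-r)(s)$, which gives $\gamma(t(s);t_0,x_0) \in \Kcal_{[2,s_0]}$ for all $\tau_0 \leq t \leq t_0$. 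The only step that deserves minor care is the strict inequality $\tau_0 < t_0$, which is automatic once $(t_0, x_0)$ lies in the interior of $\Kcal_{[2,+\infty)}$; I do not expect a substantive obstacle, but one should treat separately the degenerate case $r_0 = 0$ (where $C = 0$ so the integral curve is simply $r \equiv 0$, $t = s$) to ensure the parametrization by $s$ is interpreted correctly.
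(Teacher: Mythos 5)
Your proof is correct, and it takes a genuinely different route from the paper's. The paper argues topologically: it first establishes that $s^2 = t^2 - r^2$ is strictly increasing along $\gamma$ (via $L(\ln s^2) = 2t/(t^2+r^2) > 0$), then defines $\tau_0$ as an infimum and uses a continuity/contradiction argument to show that $\gamma(\tau_0;t_0,x_0)$ must lie on the topological boundary $\del\Kcal_{[2,s_0]}$, and finally rules out $\Hcal_{s_0}$ using the monotonicity of $s$ together with $\tau_0 < t_0$. You instead discover the conserved quantity $r/s^2$ along integral curves of $L$, which gives an explicit parametrization $r(s) = Cs^2$, $t(s) = s\sqrt{1+C^2s^2}$, from which everything is read off constructively: $s$ increases in $t$, and $(t-r)(s)$ is strictly increasing with limit $0$ at $s=0^+$, so the intermediate value theorem produces the exit point unambiguously. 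This buys you more: the paper's argument implicitly assumes (without proof) that the infimum set is bounded below, i.e.\ that the curve actually exits the region in finite backward time; your parametrization verifies this directly by exhibiting where the curve crosses $\{t=r+1\}$ or $\Hcal_2$. You also correctly flag the degenerate cases ($r_0=0$, and $(t_0,x_0)$ on the boundary where $\tau_0 = t_0$), the latter of which the paper glosses over as well. The conserved quantity $r/s^2$ is a clean observation worth keeping.
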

\begin{proof}
We first remark that in the region $\Kcal_{[2,+\infty)}$, along the curve $\gamma(t;t_0,x_0)$, $s = \sqrt{t^2-r^2}$ is strictly increasing:
$$
L s^2 = L(t^2-r^2) = 2t - \frac{4r^2t}{t^2+r^2} =   \frac{2t}{t^2+r^2}\cdot (t^2-r^2) = \frac{2ts^2}{t^2+r^2}
$$
thus
$$
L(\ln s^2) = \frac{2t}{t^2+r^2}>0
$$
thus $s$ is strictly increasing along $\gamma(t;t_0,x_0)$.

Thus, for $(t_0,x_0)\in \Kcal_{[2,+\infty)}$, there exists a $2\leq t_0'<t_0$ such that $\forall t_0'< t < t_0$, $\gamma(t;t_0,x_0)\in \Kcal_{[2,s_0)}$. We take
$$
\tau_0 := \inf\{t_0'|\forall\, t_0'<t<t_0,\, \gamma(t;t_0,x_0)\in\Kcal_{[2,s_0)}\}.
$$
We will prove that $\gamma(\tau_0;t_0,x_0)$ is contained in $\del\Kcal\cup \Hcal_2$. First we prove that $\gamma(\tau_0;t_0,x_0)\in \del\Kcal_{[2,s_0]}$. This is because if not so, by the continuity (the interior of $\Kcal_{[2,s_0]}$), there exists a $t_0''<t_0'$ such that $\gamma(t_0'';t_0,x_0)\in \Kcal_{[2,s_0)}$ which is a contradiction.

Then we remark that $\gamma(\tau_0;t_0,x_0)\notin \Hcal_{s_0}$, that is  because the hyperbolic distance form $\gamma(t;t_0,x_0)$ to the origin is strictly increasing, and we remark that $\tau_0<t_0$.
\end{proof}
%\marginpar{Verified}

Based on the above lemma, we are ready to establish the following estimate:
\begin{proposition}\label{prop 1 dw}
Let $u$ be a $C^2$ function defined in $\Kcal_{[2,s_0]}$, and vanishes near the conical boundary $\{t = r+1\}$. Then for any point $(t,x)\in \Kcal_{[2,s_0]}$, the following estimate holds:
\begin{equation}\label{eq 1 prop-main Linfty-dw}
|\del_tu(t,x)|\leq Cs^{-1}\|\del_t u\|_{L^{\infty}(\Hcal_2)} + s^{-1}\int_2^te^{-\int_\tau^tP(\eta;t_0,x_0)d\eta}|R_w(\tau;t,x)|d\tau
\end{equation}
\end{proposition}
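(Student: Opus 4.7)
The plan is to view \eqref{eq 2 linfty-dw} as a linear first-order ODE along the integral curve $\gamma(\cdot;t_0,x_0)$ of the vector field $L$ passing through an arbitrary fixed point $(t_0,x_0)\in\Kcal_{[2,s_0]}$, solve it explicitly by the integrating factor $e^{\int P\,d\eta}$, and then convert back from the auxiliary quantity $w=(t-r)^{1/2}t^{1/2}\del_tu$ to $\del_tu$ by dividing by $(t-r)^{1/2}t^{1/2}$, which is comparable to $s$ up to a universal constant.

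First I would invoke Lemma \ref{lem 1 decompo-w} to produce a value $\tau_0\in[2,t_0)$ such that $\gamma(\tau_0;t_0,x_0)$ lies on $\del\Kcal\cup\Hcal_2$ while $\gamma(\tau;t_0,x_0)\in\Kcal_{[2,s_0]}$ for $\tau_0\leq\tau\leq t_0$. This guarantees that the ODE \eqref{eq 2 linfty-dw} for $w_{t_0,x_0}(\tau):=w\circ\gamma(\tau;t_0,x_0)$ makes sense on $[\tau_0,t_0]$. The variation-of-constants formula, as in \eqref{eq 2 ODE-model}, then gives
\begin{equation*}
w_{t_0,x_0}(t_0)=w_{t_0,x_0}(\tau_0)\,e^{-\int_{\tau_0}^{t_0}P(\eta;t_0,x_0)\,d\eta}+\int_{\tau_0}^{t_0}R_w(\tau;t_0,x_0)\,e^{-\int_\tau^{t_0}P(\eta;t_0,x_0)\,d\eta}\,d\tau.
\end{equation*}

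Next I would estimate the boundary contribution $w_{t_0,x_0}(\tau_0)$. Two cases arise: if $\gamma(\tau_0)\in\del\Kcal$, then since $u$ vanishes near $\{t=r+1\}$ so does $\del_tu$, hence $w_{t_0,x_0}(\tau_0)=0$; if $\gamma(\tau_0)\in\Hcal_2$, then on $\Hcal_2$ one has $(t-r)t=s^2\,t/(t+r)\leq 4$, so $(t-r)^{1/2}t^{1/2}\leq 2$ and consequently $|w_{t_0,x_0}(\tau_0)|\leq 2\|\del_tu\|_{L^\infty(\Hcal_2)}$. Using $P\geq 0$ to bound the exponential factor by $1$, and extending $R_w$ by zero (if necessary) on $[2,\tau_0]$—which is harmless because $u$, and hence $R_w$, vanishes near the conical boundary—I can rewrite the inequality with a clean integration range $[2,t_0]$:
\begin{equation*}
|w_{t_0,x_0}(t_0)|\leq 2\|\del_tu\|_{L^\infty(\Hcal_2)}+\int_2^{t_0}|R_w(\tau;t_0,x_0)|\,e^{-\int_\tau^{t_0}P(\eta;t_0,x_0)\,d\eta}\,d\tau.
\end{equation*}

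Finally, to pass from $w$ to $\del_tu$, I note the elementary algebraic inequality $(t_0-r_0)^{1/2}t_0^{1/2}=s_0\bigl(t_0/(t_0+r_0)\bigr)^{1/2}\geq s_0/\sqrt{2}$, so dividing by $(t_0-r_0)^{1/2}t_0^{1/2}$ yields \eqref{eq 1 prop-main Linfty-dw} with $C=2\sqrt{2}$. The only subtle point I anticipate is a careful treatment of the boundary contribution: ensuring that in the case $\gamma(\tau_0)\in\del\Kcal$ one may legitimately extend the integrand by zero back to $\tau=2$ so that the stated integration range is honest, and verifying that the constant arising from the comparison between $(t-r)^{1/2}t^{1/2}$ and $s$ on $\Hcal_2$ (where $s=2$ but $t-r$ is not bounded below) is uniform. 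Everything else is a straightforward Duhamel computation.
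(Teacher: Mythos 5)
Your proof is correct and follows the paper's argument essentially verbatim: Duhamel's formula for the ODE along the integral curve of $L$ (supplied by Lemma \ref{lem 1 decompo-w}), positivity of the potential $P$ to drop the exponential on the boundary term, and the pointwise comparison between $(t-r)^{1/2}t^{1/2}$ and $s$ to translate the bound on $w$ into one on $\del_tu$. You supply somewhat more explicit detail than the paper on the boundary contribution at $\Hcal_2$ and the conversion constant, but the underlying computation is the same.
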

where
$$
R_w(\tau;t,x) := (1+(r/t)^2)^{-1}(t-r)^{1/2}t^{1/2}\left(\sum_{a}\delu_a\delu_a u + \Box u\right)\bigg|_{\gamma(\tau;t,x)}.
$$
\begin{proof}
With the above notation, we write \eqref{eq 2 linfty-dw} into the following form:
$$
w_{t,x}'(\tau) + P(\tau;t,x)w_{t,x}(\tau) = R_w(\tau;t,x) .
$$
Then by standard ODE argument, we see that
$$
w_{t,x}(t) = w_{t,x}(\tau_0)e^{-\int_{\tau_0}^tP(\eta;t,x)d\eta} + \int_{\tau_0}^te^{-\int_\tau^tP(\eta;t,x)d\eta}R_w(\tau;t,x)d\tau
$$
with
$$
P(t;t_0,x_0) := \frac{t-r}{2t^2}\cdot\frac{2+(3r/t)}{1+(r/t)^2}\geq \frac{t-r}{t^2},
$$
thus
$$
|w_{t,x}(t)|\leq |w_{t,x}(\tau_0)| + \int_2^te^{-\int_\tau^tP(\eta;t,x)d\eta}|R_w(\tau;t,x)|d\tau
$$
which leads to the desired result.
\end{proof}

\section{The $L^{\infty}-L^{\infty}$ estimate based on Kirchhoff's formula}\label{sec Kirchhoff-f}
As already explained in introduction, we need to establish a ``good'' bound of the term $R_w$ in \eqref{eq 1 prop-main Linfty-dw} in order to get sufficient bound on $\del_t u$. To do so, we need the following estimate:
\begin{proposition}\label{prop 1 w-Kirchhoff}
Let $u$ be a $C^2$ function defined in $\Kcal_{[2,s_0]}$, vanishes near the conical boundary $\del\Kcal := \{t=r+1\}$ and satisfies the following Cauchy problem:
\begin{equation}\label{eq 1 prop 1 w-Kirchhoff}
\left\{
\aligned
&\Box u = F,
\\
&u|_{\Hcal_2} = u_0,\quad \del_t u|_{\Hcal_2} = u_1
\endaligned
\right.
\end{equation}
with $u_0,u_1$ being $C_c^{\infty}$ and supported in $\Hcal_2\cap \Kcal$. Suppose that $F$ is defined in $\Kcal_{[2,s_0]}$ and vanishes near $\Kcal$, and satisfies the following bound:
\begin{equation}\label{eq 2 prop 1 w-Kirchhoff}
|F(t,x)|\leq C_Ft^{-2}.
\end{equation}
Then the following estimate holds:
\begin{equation}\label{eq 3 prop 1 w-Kirchhoff}
|u(t,x)|\leq CC_F(s/t) + C_0 s^{-1}
\end{equation}
with $C_0$ a constant determined by $u_0$ and $u_1$.
\end{proposition}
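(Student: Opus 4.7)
My plan is to split $u = u_{\rm hom}+u_{\rm inh}$, where $u_{\rm hom}$ solves the homogeneous wave equation with the given Cauchy data and $u_{\rm inh}$ solves $\Box u_{\rm inh}=F$ with zero initial data, and to estimate each piece separately. Using the reduction in the remark following Theorem~\ref{thm main}, I may replace the data on $\Hcal_2$ by $C_c^\infty$ Cauchy data $(\tilde u_0,\tilde u_1)$ on the flat slice $\{t=2\}$ supported in the closed unit disc, so that both pieces are given by the classical Kirchhoff-type representations in $2+1$ dimensions.

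For $u_{\rm hom}$, the sharp pointwise decay estimate for the free $2$D wave equation with compactly supported smooth data gives, inside the forward solid light cone, $|u_{\rm hom}(t,x)|\le C_0(1+t+r)^{-1/2}(1+|t-r|)^{-1/2}$. In the region $\Kcal$ this product is comparable to $s^{-1}$, so we obtain $|u_{\rm hom}(t,x)|\le C_0 s^{-1}$, where $C_0$ is controlled by a few derivatives of $(u_0,u_1)$.

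For $u_{\rm inh}$, inserting the hypothesis $|F(\tau,\cdot)|\le C_F\tau^{-2}$ together with the support condition $F(\tau,z)=0$ unless $|z|<\tau-1$ into the inhomogeneous Kirchhoff formula reduces matters to showing
\begin{equation*}
\int_2^t\tau^{-2}\int_{|z-x|<t-\tau,\,|z|<\tau-1}\frac{dz}{\sqrt{(t-\tau)^2-|z-x|^2}}\,d\tau \;\le\; C\,\frac{s}{t}.
\end{equation*}
The naive angular bound $2\pi$ loses, giving only an integrand of order $\tau^{-2}(t-\tau)$ whose time-integral is of order $t$, so the gain must come from the geometric restriction. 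The cleanest way I see is to work in the Lorentz-boosted hyperbolic frame based at $(t,x)/s$: writing a past point $(\tau,z)$ in hyperbolic coordinates $(\sigma,\eta,\omega)$ adapted to that frame, the kernel factorizes as
\begin{equation*}
(t-\tau)^2-|z-x|^2 \;=\; s^2+\sigma^2-2s\sigma\cosh\eta \;=\; (s-\sigma e^{\eta})(s-\sigma e^{-\eta}),
\end{equation*}
and the past-cone condition is $2\le\sigma<s$ with $|\eta|<\operatorname{arccosh}(s/\sigma)$. The $\omega$-average of $\tau^{-2}$ is easy, the $\eta$-integral of the reciprocal square root is an elementary integral thanks to the factorization, and the remaining one-dimensional integral in $\sigma$ is explicit.

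The main obstacle is extracting precisely the factor $s/t$ (and not $O(1)$ or $O(\log t)$) from that final $\sigma$-integral: the decay $\tau^{-2}$ re-expressed in $(\sigma,\eta,\omega)$, the Jacobian, and the degeneration of the past hyperboloid at $\sigma=s$ must all conspire in the right way. I expect to need a substitution such as $\sigma=s\cos\alpha$ (or $s-\sigma=(t-r)\mu$) to regularize the endpoint $\sigma\to s$, together with a case split between the bulk regime $r\lesssim t/2$, where $s/t\asymp 1$ and the estimate is essentially an $O(1)$ bound, and the near light-cone regime $t-r\ll t$, where the smallness of $s/t$ must be produced by the shrinking of the hyperbolic past cone. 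Once this sharp gain is in hand, combining with the $u_{\rm hom}$ bound proves the proposition.
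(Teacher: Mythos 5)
Your opening split $u=u_{\rm hom}+u_{\rm inh}$ and the $s^{-1}$ bound for $u_{\rm hom}$ via the standard Kirchhoff decay are the same as the paper's. Where you diverge is in the treatment of $u_{\rm inh}$: the paper keeps the flat slicing $\tau\in[t_0,t]$, rescales $(\lambda,\omega)=(\tau/t,y/t)$, and runs an elementary but long Euclidean analysis of the lens $D_0(\lambda)\cap D_1(\lambda)$ (with the case table {\bf IA},\dots,{\bf IIIB} and the five-piece split of the $\lambda$-interval), whereas you propose to pass to Lorentz-boosted hyperbolic polar coordinates centered at $(t,x)$ so that the kernel factorizes. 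That reframing is genuinely different and attractive, but as written it has two concrete gaps. First, the domain of integration you state, $2\le\sigma<s$ with $|\eta|<\operatorname{arccosh}(s/\sigma)$, drops the support constraint $(\tau,z)\in\Kcal$, i.e.\ $|z|<\tau-1$, which you yourself flag as the source of the gain; in fact $(s-\sigma e^{\eta})(s-\sigma e^{-\eta})\geq 0$ forces $|\eta|\leq\ln(s/\sigma)$ (not $\operatorname{arccosh}$), and the further $\Kcal$ restriction becomes a $\phi$- and $\eta$-dependent inequality in the boosted frame, so the ``elementary $\eta$-integral'' and the ``explicit $\sigma$-integral'' you anticipate are not yet the right integrals. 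Second, and decisively, the crux of the proposition --- extracting precisely the factor $s/t$ rather than $O(1)$ from the final one-dimensional integral --- is left as a conjecture (``I expect to need \dots'', ``once this sharp gain is in hand''). That step is exactly what the paper spends Lemma~\ref{lem 2 w-Kirchhoff} and its five-region splitting on; without it you have a plausible plan, not a proof. If you wish to pursue the boosted route, you should (i) write the constraint $\tau>|z|+1$ in the $(\sigma,\eta,\phi)$ variables and verify it is what controls the small-$\sigma$ end of the integral, and (ii) carry out the $\sigma$-integral near $\sigma e^{\eta}=s$ carefully enough to see the $s/t$ appear in the near-light-cone regime $(t-r)/t\ll1$, which is where the paper's argument is genuinely delicate.
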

%\marginpar{Finial verification till here 19:43 14-12-2017}
Remark that $s^{-1}\leq (s/t)$ in $\Kcal$. The proof of this result is as following: first we decompose $u$ in the following manner:
\begin{equation}
\Box w_1 = F,\quad w_1|_{\Hcal_2} =  w_1|_{\Hcal_2} = 0
\end{equation}
and
$$
\Box w_2 = 0,\quad w_2|_{\Hcal_2} = u_0,\quad \del_t w_2|_{\Hcal_2} = u_1.
$$
It is clear (by Kirchhoff's formula for homogeneous equations) that
$$
|w_2(t,x)|\leq C_0s^{-1}
$$
with $C_0$ a constant determined by $u_0$ and $u_1$. The main difficulty comes from the bound on $w_1$. We recall
$$
w_1(t,x) = \frac{1}{2\pi}\int_{t_0}^t\int_{|y|<t-\tau}\frac{F(\tau,x+y)}{\sqrt{(t-\tau)^2 - |y|^2}}dyd\tau
$$
The strategy is to substitute the bound of $F$ into the above expression and get the desired bound. This is long and tedious calculation and we decompose this task in serval parts.

First, we remark that
$$
\aligned
|w_1(t,x)|\leq& \frac{C_F}{2\pi}\int_{t_0}^t\int_{|y|\leq t-\tau}\frac{\mathds{1}_{\{|x+y|<\tau-1\}}dyd\tau }{\tau^2\sqrt{(t-\tau)^2 - |y|^2}}
\\
=&\int_{t_0/t}^1 \lambda^{-2}\int_{|\omega|\leq 1-\lambda}\frac{\mathds{1}_{\{|x/t + \omega|\leq \lambda-t^{-1}\}}}{\sqrt{(1-\lambda)^2-|\omega|^2}}d\omega d\lambda
\endaligned
$$
where
$$
\lambda := \tau/t,\quad \omega := y/t.
$$
We define
$$
I(\lambda) := \int_{|y|\leq 1-\lambda}\frac{\mathds{1}_{\{|x/t + y|\leq \lambda-t^{-1}\}}}{\sqrt{(1-\lambda)^2-|y|^2}}dy
$$
and the desired result is equivalent to the following inequality:
\begin{equation}\label{eq 1 pr prop-main Linfty-w}
\int_{t_0/t}^1\lambda^{-2}I(\lambda)d\lambda\leq CC_F(s/t).
\end{equation}
And this inequality will be guaranteed by the following two lemmas \ref{lem 1 w-Kirchhoff}, \ref{lem 2 w-Kirchhoff}.

We first give some general description: $I(\lambda)$ is the integration of the function $((1-\lambda)^2-|y|^2)^{-1/2}$ on the region
$$
D_{\lambda} = \{y\in\mathbb{R}^3||y + x/t|\leq \lambda-t^{-1},|y|\leq 1-\lambda\}.
$$

Without lose of generality, we make the following convention: let $x = (r,0)$. For each $t_0/t\leq \lambda\leq 1$, we denote by
$$
D_0(\lambda) = \{y\in\mathbb{R}^2\big||y|\leq 1-\lambda\}, \quad D_1(\lambda) = \{y\in\mathbb{R}^2\big||y+x/t|\leq \lambda-t^{-1}\}
$$
and we see that $D_\lambda = D_0(\lambda)\cap D_1(\lambda)$. We denote by $X = (-r/t,0)$ and $O = (0,0)$. Then on $\mathbb{R}^2$ we introduce the following parametrization:
$$
\rho:=|y|,\quad \theta:= \text{the angle from $(1,0)$ to $y$}
$$
Then
$$
\aligned
I(\lambda) = \int_{\rho_0}^{\rho_1}\frac{\rho d\rho}{\sqrt{(1-\lambda)^2-\rho^2}}\int_{\theta(\rho)}^{2\pi-\theta(\rho)}d\theta
=2\int_{\rho_0}^{\rho_1}\frac{(\pi-\theta(\rho))\rho d\rho}{\sqrt{(1-\lambda)^2-\rho^2}}.
\endaligned
$$
The bounds of integration is determined by the relative position of the two discs. More precisely by the following two criteria:
\begin{itemize}
\item  Is one disc contained in the other? There are three cases: {\bf I}, $D_0(\lambda)\supset D_1(\lambda)$; {\bf II} $D_0(\lambda)\cap D_1(\lambda)$ is non-empty but neither is contained in the other; {\bf III} $D_0(\lambda)\subset D_1(\lambda)$.
\\
 \item Is the center of $D_0(\lambda)$ contained in $D_1(\lambda)$? There are  two cases: {\bf A}, $O\notin D_1(\lambda)$; {\bf B}, $O\in D_1(\lambda)$.
\end{itemize}
Then we use a composed index such as ${\bf IIA}$ to describe the relative position of the two discs. For $\lambda$ varies from $t_0/t$ to $1$, the relation between the two discs can be showed by the following table:
$$
{\bf IA} \quad \frac{t-r+1}{2t}\quad {\bf IIA}\quad \frac{r+1}{t}\quad {\bf IIB}\quad \frac{t+r+1}{2t}\quad{\bf IIIB}\quad \text{for } r>\frac{t-1}{3}
$$
$$
{\bf IA} \quad \frac{r+1}{t} \quad  {\bf IB}\quad \frac{t-r+1}{2t} \quad {\bf IIB}\quad \frac{t+r+1}{2t}\quad {\bf IIIB}\quad \text{for } r<\frac{t-1}{3}
$$
The value between two cases such as $\frac{t-r+1}{2t}$ between {\bf IA} and {\bf IIA} means that $\lambda = \frac{t-r+1}{2t}$ separates this two cases: when $\lambda < \frac{t-r+1}{2t}$ we are in {\bf IA} and when $\lambda>\frac{t-r+1}{2t}$ we are in {\bf IIA}.
%We thus have the following cases:
%$$
%{\bf IA,\quad IB,\quad IIA,\quad IIB,\quad IIIB.}
%$$

Now we state the two lemmas:

\begin{lemma}\label{lem 1 w-Kirchhoff}
Taking the above notation. Let $\epsilon_0>0$, then let $(t,x)\in \Kcal$ with $\frac{t-r}{t}>\epsilon_0$, then
\begin{equation}\label{eq 1 lem 1 w-Kirchhoff}
\int_{t_0/t}^1\lambda^{-2}I(\lambda)d\lambda\leq C(\epsilon_0)C_F.
\end{equation}
\end{lemma}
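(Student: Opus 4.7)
The plan is to split the integration interval $[t_0/t,1]$ according to the relative position of the two discs $D_0(\lambda)$ and $D_1(\lambda)$ recorded in the table preceding the lemma: regime $\mathbf{I}$ where $D_1\subset D_0$, i.e.\ $\lambda\leq \lambda_1^*:=(t-r+1)/(2t)$; regime $\mathbf{II}$ where the two discs overlap without containment, $\lambda_1^*\leq\lambda\leq\lambda_2^*:=(t+r+1)/(2t)$; and regime $\mathbf{III}$ where $D_0\subset D_1$, $\lambda\geq\lambda_2^*$. The hypothesis $(t-r)/t>\epsilon_0$ delivers the two lower bounds $\lambda_1^*>\epsilon_0/2$ and $\lambda_2^*\geq 1/2$, while $t\geq 2$ gives $1+r/t-1/t\geq 1/2$. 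I bound the contribution from each regime separately.

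In regime $\mathbf{III}$ the indicator is identically $1$ on $D_0(\lambda)$, so a direct polar integration gives $I(\lambda)=2\pi(1-\lambda)$, and $\int_{\lambda_2^*}^{1}\lambda^{-2}(1-\lambda)\,d\lambda$ is elementary and bounded by an absolute constant thanks to $\lambda_2^*\geq 1/2$. In regime $\mathbf{II}$ I use the crude bound $I(\lambda)\leq 2\pi(1-\lambda)\leq 2\pi$ together with the short length $\lambda_2^*-\lambda_1^*=r/t\leq 1$ of the interval and the lower bound $\lambda\geq\lambda_1^*>\epsilon_0/2$, yielding a contribution at most $C\epsilon_0^{-2}$; this is where the $\epsilon_0$-dependence enters the final constant.

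The delicate regime is $\mathbf{I}$. Here the indicator restricts the integration to $D_1(\lambda)\subset D_0(\lambda)$. Denoting the maximum of $|\omega|$ on $D_1$ by $\rho_*:=r/t+\lambda-1/t$, bounding the integrand by its supremum on $D_1$ and using $\mathrm{area}(D_1)=\pi(\lambda-1/t)^2\leq\pi\lambda^2$ gives $I(\lambda)\leq\pi\lambda^2/\sqrt{(1-\lambda)^2-\rho_*^2}$. The key algebraic identity
$$(1-\lambda)^2-\rho_*^2=2(\lambda_1^*-\lambda)(1+r/t-1/t)\geq \lambda_1^*-\lambda$$
then yields $\lambda^{-2}I(\lambda)\leq\pi(\lambda_1^*-\lambda)^{-1/2}$, which is integrable on $[t_0/t,\lambda_1^*]$ with total bound $2\pi\sqrt{\lambda_1^*}\leq 2\pi$. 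Summing the three contributions gives the desired constant $C(\epsilon_0)$, which combined with the $C_F$ factor already pulled outside yields the stated bound.

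The main obstacle is the square-root singularity of the Kirchhoff kernel in regime $\mathbf{I}$ as $\lambda\uparrow\lambda_1^*$; fortunately the factorization identity above shows that after dividing by $\lambda^2$ the singularity is exactly of integrable type $(\lambda_1^*-\lambda)^{-1/2}$, so it disappears after integration. The $\epsilon_0$-dependence of the constant is genuine: it enters through regime $\mathbf{II}$, where the $\lambda^{-2}$ factor must be controlled by the lower bound on $\lambda_1^*$, which can be made arbitrarily small by letting $(t-r)/t\to 0$.
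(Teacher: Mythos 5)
Your proof is correct and takes essentially the same route as the paper's: you isolate regime $\mathbf{I}$ (small $\lambda$) as the only potential source of divergence, bound $I(\lambda)$ there by $\mathrm{area}(D_1)$ times the supremum of the kernel on $D_1$ so that the $\lambda^2$ cancels the $\lambda^{-2}$, and exploit the factorization $(1-\lambda)^2-\rho_*^2 = 2(\lambda_1^*-\lambda)(1+r/t-1/t)$ to expose an integrable $(\lambda_1^*-\lambda)^{-1/2}$ singularity. The only cosmetic departure is that you split the remaining interval into regimes $\mathbf{II}$ and $\mathbf{III}$, whereas the paper treats $[\lambda_1^*,1]$ as a single block with the crude bound $I(\lambda)\le C$; both arguments pick up their $\epsilon_0$-dependence from the same place, the lower bound $\lambda_1^*\gtrsim \epsilon_0$.
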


\begin{lemma}\label{lem 2 w-Kirchhoff}
There exists a positive constant $\delta_0>0$ such that for a point $(t,x)\in\Kcal$ with $0<\frac{t-r}{t}<\delta_0$,
\begin{equation}\label{eq 1 lem 2 w-Kirchhoff}
\int_{t_0/t}^1\lambda^{-2}I(\lambda)d\lambda\leq CC_F(s/t).
\end{equation}
\end{lemma}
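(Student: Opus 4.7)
The plan is to split the integral $\int_{t_0/t}^1 \lambda^{-2} I(\lambda)\,d\lambda$ into the four geometric regimes {\bf IA}, {\bf IIA}, {\bf IIB}, {\bf IIIB}, which under the hypothesis $(t-r)/t<\delta_0$ (and hence $r>(t-1)/3$) occur in this order with transition values given in the second row of the table: the {\bf IA}, {\bf IIB}, {\bf IIIB} intervals all have length of order $(t-r)/t$, while {\bf IIA} spans an interval of length of order one. In each regime I would determine the angular bound $\theta(\rho)$ via the cosine rule
\begin{equation*}
\cos\theta(\rho) = \frac{(\lambda - 1/t)^2 - (r/t)^2 - \rho^2}{2(r/t)\rho},
\end{equation*}
compute the corresponding range of $\rho$, bound $I(\lambda)$, and then integrate against $\lambda^{-2}$.

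The easy regime is {\bf IIIB}, where $D_0\subset D_1$, so $\theta\equiv 0$ on $[0,1-\lambda]$ and $I(\lambda)=2\pi(1-\lambda)$; integration against $\lambda^{-2}$ on an interval of length of order $(t-r)/t$ contributes at most a constant multiple of $\big((t-r)/t\big)^2\lesssim (s/t)^2\leq (s/t)$. In regime {\bf IA}, $D_1\subset D_0$ and $\rho$ is confined to $[r/t-(\lambda-1/t),\,r/t+(\lambda-1/t)]$, strictly below the singular value $1-\lambda$; factoring $(1-\lambda)^2-\rho^2=(1-\lambda-\rho)(1-\lambda+\rho)$ shows the radicand is bounded below by a positive multiple of $(s/t)^2$, so the integrand is controlled by $C(s/t)^{-1}$ multiplied by an area of size $(\lambda-1/t)^2$; integrating this against $\lambda^{-2}$ on the short $\lambda$-interval produces a contribution of order $(s/t)^{-1}\big((t-r)/t\big)^2\lesssim (s/t)$.

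The substantial part of the argument lies in the regimes {\bf IIA} and {\bf IIB}, where $\rho_1=1-\lambda$ and the integrand $\big((1-\lambda)^2-\rho^2\big)^{-1/2}$ has a genuine square-root singularity at the upper endpoint of the $\rho$-integration. The key observation is that, from the cosine-rule formula, as $\rho\to (1-\lambda)^-$ the quantity $\pi-\theta(\rho)$ vanishes at a precise rate which, combined with the behaviour of $1-r/t$, compensates the singularity; an explicit expansion of the form $\pi-\theta(\rho)\lesssim \big((1-\lambda)-\rho\big)^{1/2}(s/t)^{-1/2}$ makes the radial integrand bounded. A parallel argument handles the lower endpoint in {\bf IIA}, where $\pi-\theta$ vanishes as $\rho\to\rho_0^+$, and the transition $\theta\equiv 0\to\theta>0$ in {\bf IIB}. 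These pointwise estimates yield $I(\lambda)\lesssim (s/t)$ uniformly in $\lambda$, and integrating against $\lambda^{-2}$ on the relevant intervals produces a contribution $\lesssim C_F(s/t)$.

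The main obstacle is precisely the sharp bound $I(\lambda)\lesssim s/t$ in regimes {\bf IIA} and {\bf IIB}: one has to track simultaneously the geometric vanishing of $\pi-\theta(\rho)$ at the meeting points of $\partial D_0$ and $\partial D_1$ and the Jacobian-type singularity of the radial area element, and extract exactly one power of $s/t$ from the combined behaviour, uniformly over the long interval of $\lambda$ in {\bf IIA}. Once this pointwise estimate is in place, the remaining $\lambda$-integrations are elementary and summation of the four regime contributions yields \eqref{eq 1 lem 2 w-Kirchhoff}.
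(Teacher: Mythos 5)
Your decomposition by geometric regime is the right starting point, and the regimes {\bf IA} and {\bf IIIB} can indeed be dispatched with elementary area bounds, but the central claim driving the {\bf IIA}/{\bf IIB} estimate is false, and even if it held, the $\lambda$-integration would not close to the asserted bound.

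The key assertion --- that $\pi-\theta(\rho)\to 0$ as $\rho\to(1-\lambda)^-$, with the quantitative bound $\pi-\theta(\rho)\lesssim\big((1-\lambda)-\rho\big)^{1/2}(s/t)^{-1/2}$ --- is wrong in regime {\bf IIA}. At $\rho=\rho_1=1-\lambda$ one lies on $\partial D_0$, and the arc of $\partial D_0$ inside $D_1$ has strictly positive angular half-width for every $\lambda>\frac{t-r+1}{2t}$; indeed the cosine rule gives
$$
1-\cos\big(\pi-\theta(\rho_1)\big)=\frac{(t-1-r)}{r(1-\lambda)}\Big(\lambda-\frac{t-r+1}{2t}\Big),
$$
which vanishes only at the tangency value $\lambda=\frac{t-r+1}{2t}$. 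The $(1-\lambda-\rho)^{-1/2}$ singularity is therefore genuinely present near the upper endpoint; it is integrable in $\rho$, but it is not killed pointwise. What controls $I(\lambda)$ is the maximal angular half-width $\pi-\theta_0$ as a function of $\lambda$, and this requires the split at the threshold $\lambda_-$ (where $\angle XP_+O=\pi/2$): for $\lambda<\lambda_-$ the extremal angle comes from the tangents out of $O$ to $\partial D_1$ and $\sin(\pi-\theta_0)=(\lambda-t^{-1})/(r/t)\lesssim\lambda$, whereas for $\lambda>\lambda_-$ it comes from the intersection points $P_\pm$ and $\pi-\theta_0\lesssim\big(\lambda-\frac{t-r+1}{2t}\big)^{1/2}\big(\frac{t-r}{t}\big)^{1/2}$. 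Your proposal does not identify this dichotomy, yet it is exactly the $\lambda$-dependence it produces that makes the argument work.

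The second gap is arithmetic. Even if $I(\lambda)\lesssim s/t$ held uniformly, the regimes {\bf IIA}/{\bf IIB} start at $\lambda\sim\frac{t-r}{t}\sim(s/t)^2$, and $\int_{(s/t)^2}^1\lambda^{-2}\,d\lambda\sim(s/t)^{-2}$, so the resulting contribution would be of order $(s/t)^{-1}$, large rather than $\lesssim s/t$. One genuinely needs $I(\lambda)$ to decay as $\lambda\downarrow\frac{t-r+1}{2t}$, which is supplied by the $\lambda$-dependent angular bounds above but not by your pointwise claim. A similar issue, though less severe, affects your regime {\bf IA}: the radicand $(1-\lambda)^2-|y|^2$ is not bounded below by a multiple of $(s/t)^2$ on $D_1$ --- it tends to $0$ as $\lambda\uparrow\frac{t-r+1}{2t}$ --- so one must retain the integrable singular factor $\big(\frac{t-r+1}{2t}-\lambda\big)^{-1/2}$ and integrate it in $\lambda$ instead of absorbing it into a constant.
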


\begin{proof}[Proof of lemma \ref{lem 1 w-Kirchhoff}]
We remark that for any $\epsilon_0>0$, if we consider the region $\Kcal_{\epsilon_0} := \{(t,x)\in\Kcal|(t-r)/t>\epsilon_0\}$, then in $\Kcal_{\epsilon_0}$, $(s/t)>\sqrt{\epsilon_0}>0$.

We see that in the integral
$$
\int_{t_0/t}^1\lambda^{-2}I(\lambda)d\lambda
$$
$I(\lambda)$ is always bounded, because it can be controlled by
$$
\int_{|y|<1-\lambda}\frac{dy}{\sqrt{(1-\lambda)^2-|y|^2}} = (1-\lambda)\int_{|y|<1}\frac{dy}{\sqrt{1-|y|^2}}\leq C.
$$
The possible singularity comes from the term $\lambda^{-2}$. Thus to bound the above integra, the key is to estimate the asymptotic behavior of $I(\lambda)$ when $\lambda\rightarrow 0^+$.

In the case {\bf IA} and {\bf IB}, the function $I(\lambda)$ is bounded as following:
\begin{equation}\label{eq 1 09-12-2017}
I(\lambda) = \int_{D_{\lambda}}\frac{dy}{\sqrt{(1-\lambda)^2-|y|^2}}\leq S(D_1)\cdot \sup_{D_1}{((1-\lambda)^2-|y|^2)^{-1/2}}
\end{equation}
where $S(D_1)$ stands for the area of the disc $D_1$. Thus
\begin{equation}\label{eq 2 09-12-2017}
I(\lambda)\leq C\lambda^2\left(\frac{t-r+1}{2t}-\lambda\right)^{-1/2}.
\end{equation}
Thus
$$
\aligned
\int_{t_0/t}^1\lambda^{-2}I(\lambda)d\lambda\leq& \int_{t_0/t}^{\frac{t-r+1}{2t}} + \int_{\frac{t-r+1}{2t}}^1 \lambda^{-2}I(\lambda)d\lambda
\\
\leq& C\int_{t_0/t}^{\frac{t-r+1}{2t}}\left(\frac{t-r+1}{2t}-\lambda\right)^{-1/2}d\lambda + C\int_{\frac{t-r+1}{2t}}^1 \lambda^{-2}d\lambda
\\
\leq& C(\epsilon_0)
\endaligned
$$
here we applied the fact that
$$
\int_{\frac{t-r+1}{2t}}^1 \lambda^{-2}d\lambda\leq C\left(\frac{t-r}{t}\right)^{-2}\leq C(\epsilon_0).
$$
\end{proof}
%\marginpar{Verified.}

\begin{proof}[Proof of lemma \ref{lem 2 w-Kirchhoff}]
As explained, we need to estimate the decreasing rate of $I(\lambda)$ when $\lambda\rightarrow 0^+$. In this case we see that for $\lambda\in [t_0/t,1]$, we will have the following cases
$$
{\bf IA} \quad \frac{t-r+1}{2t}\quad {\bf IIA}\quad \frac{r+1}{t}\quad {\bf IIB}\quad \frac{t+r+1}{2t}\quad{\bf IIIB}\quad \text{for } r>\frac{t-1}{3}
$$
when $\frac{t-r}{t}$ is taken sufficiently small.

We give first a geometric description on $D_{\lambda}$ when $\lambda\in[\frac{t-r+1}{2t},\frac{1+t^{-1}}{2t}]$. When $\frac{t-r}{t}$ sufficiently small, this interval is contained in $[\frac{t-r+1}{2t},\frac{r+1}{t}]$. Thus we are in the case {\bf IIA}.

For the clarity of the presentation, let us introduce and recall some geometric notation: we denote by
$$
\aligned
&O: \text{the origin}
\\
&X: \text{the point} (-r/t,0),
\\
&X_+: \text{the point} (1,0)
\\
&X_-: \text{the point} (-1,0)
\\
&O_0: \text{the circle } \{y\in\RR^2||y| = 1-\lambda\}
\\
&O_1: \text{the circle centered at $X$ and of radius $\lambda-t^{-1}$}
\\
&P_+: \text{the upper point of intersection of the two circles } O_0 \text{ and } O_1.
\\
&P_-: \text{the lower point of intersection of the two circles } O_0 \text{ and } O_1.
\\
&D_{\lambda}:=D_0\cap D_1.
\endaligned
$$
We see that by the above parametrization of $D_\lambda$ with $(\rho,\theta)$, the bound of $\theta$, denoted by $\theta_0$, is attainted at $P_\pm$ (by angle $\angle X_+OP_+$ and $\angle X_+ O P_-$) or attained by the two tangent lines on the circle $O_1$ from $O$. For a point in $D_\lambda$, the variable $\theta$ varies from $\theta_0$ to $2\pi-\theta_0$.

Denote by $\beta = \angle XP_+O$, we observe that when $\beta\leq \pi/2$, $\theta_0$ is attained by the tangents from $O$ on the circle $O_1$, when $\beta>\pi/2$, $\theta_0$ is attained by $\angle X_+OP_-$. We remark that $\beta$ is increasing with respect to $\lambda$ on $[\frac{t-r+1}{2t}, \frac{1+t^{-1}}{2}]$. This is checked as following: in the triangle $\Delta XOP_+$
$$
\cos\beta = \frac{(\lambda - t^{-1})^2 + (1-\lambda)^2 - (r/t)^2}{2(\lambda-t^{-1})(1-\lambda)}.
$$
and
$$
\frac{ d\cos\beta}{d\lambda} = \frac{(2\lambda - (1+t^{-1}))\big((1-t^{-1})^2-(r/t)^2\big)}{2(\lambda-t^{-1})^2(1-\lambda)^2}\leq 0,
$$
where we remark that $r\leq t-1$ and $t^{-1}\leq t_0^{-1}\leq \lambda\leq 1$.

We denote by $\lambda_-$ the value of $\lambda$ for which the angle $\angle XP_+O$ attained $\pi/2$. By Pythagoras's theorem, we see that
$$
(\lambda_- - t^{-1})^2+(1-\lambda_-)^2 = (r/t)^2
$$
which leads to
$$
\aligned
\lambda_{-} =& \frac{1+t^{-1}-\sqrt{(1+t^{-1})^2-2(1-(r/t)^2+t^{-2})}}{2}
\\
=&(1-(r/t))\frac{1+(r/t) + t^{-1}(t-r)^{-1}}{(1+t^{-1}) + \sqrt{(1+t^{-1})^2-2(1-(r/t)^2+t^{-2})}}.
\endaligned
$$
When $(t-r)/t$ is sufficiently small (for example $(t-r)/t<1/10$), the above equation leads to
$$
\frac{1}{2}(1-(r/t))\leq \lambda_-\leq 3(1-(r/t)).
$$
We observe that when $\lambda\in[\lambda_-,(1+t^{-1})/2]$, $\theta_0$ is attainted by $\angle X_+OP_+$ thus (by the law of cosine)
$$
\cos\theta_0 = \frac{(\lambda-t^{-1})^2-(1-\lambda)^2-(r/t)^2}{2(1-\lambda)(r/t)}.
$$
We remark that $\theta_0$ is decreasing with respect to $\lambda$. We thus take $\lambda=(1+t^{-1})/2$ and see that $|P_+X| = |P_+O| = \frac{1-t^{-1}}{2}$ thus (remark that $\cos(\pi-\theta_0)$ is decreasing with respect to $\lambda$)
$$
\inf_{[\lambda_-,(1+t^{-1})/2]}\{\cos(\pi-\theta_0)\}  =  \frac{r}{t-1} .
$$
We see that when $\frac{t-r}{t}\leq \delta_0$, we have $\frac{t-1-r}{t-1}<\delta_0$ ($t\geq t_0\geq 2, r\leq r-1$), thus
\begin{equation}\label{eq 1 proof lem 2 w-Kirchhoff}
\inf_{[\lambda_-,(1+t^{-1})/2]}\{\cos(\pi - \theta_0)\}\geq 1-\delta_0.
\end{equation}
We will prove in the following lemma \ref{lem 3 w-Kirchhoff} that there exists $\frac{\pi}{2}>\alpha_0>0$ such that when $0\leq \pi-\theta_0\leq \alpha_0$,
\begin{equation}\label{eq 2 proof lem 2 w-Kirchhoff}
\pi-\theta_0 \leq 2\sqrt{1-\cos(\pi-\theta_0)}.
\end{equation}
Remark that in \eqref{eq 1 proof lem 2 w-Kirchhoff} we can make a choice of $\delta_0$ sufficiently small that $\pi-\theta_0\leq \alpha_0$, thus on $[\lambda_-,\frac{1+t^{-1}}{2}]$ \eqref{eq 2 proof lem 2 w-Kirchhoff} holds.

Now we begin to discuss on the interval $\lambda\in [t_0/t,1]$. We divide it into five pieces and on each we apply different estimates:

$\bullet$ On the interval $[t_0/t,\frac{t-r+1}{2t}]$. We are in the case {\bf IA}. In this case \eqref{eq 1 09-12-2017} and \eqref{eq 2 09-12-2017} hold as in the proof of lemma \ref{lem 1 w-Kirchhoff}. Then
$$
\int_{t/t_0}^{\frac{t-r+1}{2t}}\lambda^{-2}I(\lambda)d\lambda\leq C\int_{t/t_0}^{\frac{t-r+1}{2t}}\big(\frac{t-r+1}{2t}-\lambda\big)^{-1/2}d\lambda
\leq C\big(\frac{t-r}{t}\big)^{1/2}.
$$

$\bullet$ On the interval $[\frac{t-r+1}{2t},\lambda_-)$: in  this case the bound on $\theta$ is attended by the two tangents from $O$ on the circle $O_1$. In this case we observe by geometric relations that:
$$
\sin(\pi - \theta_0) = \frac{\lambda t}{r}\leq \frac{t}{r}\lambda_-\leq C\frac{t-r}{t}.
$$
when $\frac{t-r}{t}$ sufficiently small, the above relation leads to the bound
$\pi - \theta_0\leq C\lambda$. Recall that we are in the case {\bf IIA},
$$
I(\lambda) =2\int_{\rho_0}^{\rho_1}\frac{(\pi-\theta(\rho))\rho d\rho}{\sqrt{(1-\lambda)^2-\rho^2}}, \quad \rho_0 = \frac{r+1}{t}-\lambda,\quad \rho_1 = 1-\lambda
$$
and $0\leq \pi-\theta(\rho)\leq \pi-\theta_0$. Also recall that $\lambda_-\leq C(t-r)/t$, then
$$
\aligned
\int_{\frac{t-r+1}{2t}}^{\lambda_-}\lambda^{-2}I(\lambda)d\lambda
\leq& C\int_{\frac{t-r+1}{2t}}^{\lambda_-} \lambda^{-2}|\pi - \theta_0|\int_{\rho_0}^{\rho_1}\frac{\rho d\rho}{\sqrt{(1-\lambda)^2-\rho^2}}
\\
\leq& C\int_{\frac{t-r+1}{2t}}^{C(t-r)t^{-1}}\lambda^{-1}\int_{\frac{r+1}{t}-\lambda}^{1-\lambda}\frac{\rho d\rho}{\sqrt{(1-\lambda)^2-\rho^2}}
\\
\leq& C\left(\frac{t-r}{t}\right)^{1/2}\int_{\frac{t-r+1}{2t}}^{C(t-r)t^{-1}}\lambda^{-1}d\lambda
\\
\leq&  C\left(\frac{t-r}{t}\right)^{1/2}.
\endaligned
$$
%\marginpar{Verified}

$\bullet$ On the interval $[\lambda_-,(1+t^{-1})/2]$, as explained above, when $\frac{t-r}{t}$ is sufficiently small, the angle $\pi - \theta_0$ is bounded by $\alpha_0$ for $\lambda_-\leq \lambda\leq (1+t^{-1})/2$ so
$$
\pi - \theta_0\leq 2\sqrt{1+\cos\theta_0} = 2\sqrt{\frac{(\lambda-t^{-1})^2-(1-\lambda - r/t)^2}{2(1-\lambda)(r/t)}}
$$
thus
$$
\pi-\theta_0\leq C\left(\lambda - \frac{t-r+1}{2t}\right)^{1/2}\left(\frac{t-r-1}{t}\right)^{1/2}
$$
Thus we see that
$$
\int_{\lambda_-}^{\frac{1+t^{-1}}{2}}\lambda^{-2}I(\lambda)d\lambda
\leq \int_{\lambda_-}^{\frac{1+t^{-1}}{2}}2\lambda^{-2}(\pi-\theta_0)\int_{\rho_0}^{\rho_1}\frac{\rho d\rho}{\sqrt{(1-\lambda)^2-\rho^2}}d\lambda
$$
We remark that in this interval $\lambda\in[\lambda_-,\frac{1+t^{-1}}{2}]$, we are in the case {\bf IIA} (when $\frac{t-r}{t}$ is taken sufficiently small) $\rho_1=1-\lambda$ and $\rho_0 = \frac{r+1}{t}-\lambda$, thus
$$
\aligned
\int_{\lambda_-}^{\frac{1+t^{-1}}{2}}\lambda^{-2}I(\lambda)d\lambda
\leq& C\int_{\lambda_-}^{\frac{1+t^{-1}}{2}}\lambda^{-2}(\pi-\theta_0)\int_{\frac{r+1}{t}-\lambda}^{1-\lambda}\frac{\rho d\rho}{\sqrt{(1-\lambda)^2-\rho^2}}d\lambda
\\
\leq& C\left(\frac{t-r}{t}\right)\int_{\lambda_-}^{\frac{1+t^{-1}}{2}} \lambda^{-2}\left(\lambda - \frac{t-r+1}{t}\right)^{1/2}d\lambda
\endaligned
$$
where we remark that
$$
\int_{\frac{r+1}{t}-\lambda}^{1-\lambda}\frac{\rho d\rho}{\sqrt{(1-\lambda)^2-\rho^2}} \leq C \left(\frac{t+r-1}{t}\right)^{1/2}.
$$
Remark that see that $\lambda-\frac{t-r-1}{2t}\leq \lambda$, then
$$
\int_{\lambda_-}^{\frac{1+t^{-1}}{2}}\lambda^{-2}I(\lambda)d\lambda\leq C\left(\frac{t-r}{t}\right)\int_{\lambda_-}^{\frac{1+t^{-1}}{2}}\lambda^{-3/2}d\lambda
$$
which leads to
$$
\int_{\lambda_-}^{\frac{1+t^{-1}}{2}}\lambda^{-2}I(\lambda)d\lambda\leq C\lambda_-^{-1/2}\left(\frac{t-r}{t}\right).
$$
We recall that $\lambda_-\geq C\frac{t-r}{t}$ thus
$$
\int_{\lambda_-}^{\frac{1+t^{-1}}{2}}\lambda^{-2}I(\lambda)d\lambda\leq C\left(\frac{t-r}{t}\right)^{1/2}.
$$
%\marginpar{Verified}

$\bullet$ On the interval $(\frac{1+t^{-1}}{2},\frac{r+1}{t}]$, we are in case {\bf IIA}. Thus $\rho_0 = \frac{r+1}{t}-\lambda$, $\rho_1=1-\lambda$. Then
$$
\int_{(1+t^{-1})/2}^{\frac{r+1}{t}}\lambda^{-2}I(\lambda)d\lambda\leq C\int_{(1+t^{-1})/2}^{\frac{r+1}{t}}\int_{\frac{r+1}{t}-\lambda}^{1-\lambda}\frac{d\rho}{\sqrt{(1-\lambda)^2-\rho^2}}d\lambda\leq C\big(\frac{t-r}{t}\big)^{1/2}.
$$

$\bullet$ On the interval $(\frac{r+1}{t},1]$, we are in the case {\bf IIB} and {\bf IIIB}. Thus $\rho_0=0, \rho_1=1-\lambda$. Then
$$
\int_{\frac{1+r}{t}}^1\lambda^{-2}I(\lambda)d\lambda\leq C\int_{\frac{1+r}{t}}^1I(\lambda)d\lambda
\leq C\int_{\frac{1+r}{t}}^1\int_0^{1-\lambda}\frac{d\rho}{\sqrt{(1-\lambda)^2-\rho^2}}d\lambda\leq C\big(\frac{t-r}{t}\big)^{1/2}.
$$
%\marginpar{proof verified}

%$\bullet$ On the interval $[1/2,\lambda_+]$, we remark that
%$$
%\int_{1/2}^{\lambda_+}\lambda^{-2}I(\lambda)d\lambda\leq C\int_{1/2}^{\lambda_+}I(\lambda)d\lambda\leq C\int_{1/2}^{\lambda_+}\int_{\rho_0}^{\rho_1}\frac{\rho d\rho}{\sqrt{(1-\lambda)^2-\rho^2}}d\lambda
%$$
%which leads to (recall $\rho_0 = \frac{r+1}{t}-\lambda$ and $\rho_1=1-\lambda$)
%$$
%\int_{1/2}^{\lambda_+}\lambda^{-2}I(\lambda)d\lambda\leq C\left(\frac{t-r}{t}\right)^{1/2}.
%$$
%
%
%On the interval $[\lambda_+,1]$. We remark that $1-\lambda_+\leq C\frac{t-r}{t}$. This leads to
%$$
%\aligned
%\int_{t_0/t}^{\lambda_-}\lambda^{-2}I(\lambda)d\lambda
%\leq& C\int_{\lambda_+}^1\int_{D_0(\lambda)}\frac{\rho d\rho}{\sqrt{(1-\lambda)^2 - \rho^2}}
%\\
%=&C\int_{\lambda_+}^1\int_0^{1-\lambda}\frac{\rho d\rho}{\sqrt{(1-\lambda)^2 - \rho^2}}
%\endaligned
%$$
%Recall that
%$$
%\int_0^{1-\lambda}\frac{\rho d\rho}{\sqrt{(1-\lambda)^2 - \rho^2}} = 1-\lambda.
%$$
%Then
%$$
%\int_{t_0/t}^{\lambda_-}\lambda^{-2}I(\lambda)d\lambda\leq C(1-\lambda_+)\leq C\left(\frac{t-r}{t}\right)^{1/2}
%$$
\end{proof}

\begin{lemma}\label{lem 3 w-Kirchhoff}
There exists a constant $0<\alpha_0<\frac{\pi}{2}$ such that for all $0\leq \alpha\leq \alpha_0$, the following relation holds
$$
\alpha\leq 2\sqrt{1-\cos\alpha}.
$$
\end{lemma}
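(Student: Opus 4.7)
The plan is to reduce the stated inequality to an elementary calculus fact by first applying the half-angle identity $1-\cos\alpha = 2\sin^{2}(\alpha/2)$. For $\alpha\ge 0$ this rewrites the target inequality $\alpha\le 2\sqrt{1-\cos\alpha}$ in the equivalent form
\[
\alpha\;\le\;2\sqrt{2}\,\sin(\alpha/2),
\]
which is the form I would actually verify. The key observation, and the reason the inequality holds at all, is simply that $2\sqrt{2}>2$ so the linear approximation of the right-hand side, namely $2\sqrt{2}\cdot(\alpha/2)=\sqrt{2}\,\alpha$, strictly dominates $\alpha$ near the origin.

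To turn this into a rigorous statement I would introduce the auxiliary function $g(\alpha):=2\sqrt{2}\,\sin(\alpha/2)-\alpha$ and study it on $[0,\pi/2)$. One has $g(0)=0$ and $g'(\alpha)=\sqrt{2}\cos(\alpha/2)-1$. For $0\le\alpha<\pi/2$ we have $\alpha/2<\pi/4$, hence $\cos(\alpha/2)>\cos(\pi/4)=1/\sqrt{2}$, which gives $g'(\alpha)>0$ throughout $[0,\pi/2)$. Thus $g$ is strictly increasing on $[0,\pi/2)$, and combining this with $g(0)=0$ yields $g(\alpha)\ge 0$ for all $\alpha\in[0,\pi/2)$. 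Choosing any $\alpha_0\in(0,\pi/2)$ then delivers the statement of the lemma.

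There is no genuine obstacle here, since the lemma is a one-variable calculus exercise; the only point worth highlighting is that the monotonicity argument above actually proves the inequality on the whole interval $[0,\pi/2)$, which is strictly larger than what is needed in the proof of Lemma~\ref{lem 2 w-Kirchhoff}. An alternative, more quantitative route would be to insert the Taylor bound $\sin(\alpha/2)\ge \alpha/2-(\alpha/2)^3/6$ directly into the inequality and solve for the resulting explicit threshold, but the derivative argument is both shorter and yields the sharper range.
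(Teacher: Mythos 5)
Your proof is correct, and it takes a genuinely different (and somewhat more informative) route than the paper. The paper defines $f(\alpha)=\sqrt{1-\cos\alpha}/\alpha$ with $f(0)=\sqrt{2}/2$, notes that $f$ is continuous on $[0,\pi/2]$, and invokes continuity at $0$ to produce some $\alpha_0>0$ with $f\geq 1/2$ on $[0,\alpha_0]$ — a short existence argument that yields no explicit threshold. You instead use the half-angle identity $1-\cos\alpha = 2\sin^{2}(\alpha/2)$ to recast the claim as $g(\alpha):=2\sqrt{2}\sin(\alpha/2)-\alpha\geq 0$, and then show $g(0)=0$ and $g'(\alpha)=\sqrt{2}\cos(\alpha/2)-1>0$ for $\alpha\in[0,\pi/2)$ (since $\cos(\alpha/2)>\cos(\pi/4)=1/\sqrt{2}$ there), giving $g\geq 0$ on all of $[0,\pi/2)$. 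Both arguments are airtight; yours buys an explicit admissible range $\alpha_0\in(0,\pi/2)$ rather than a mere existence statement, at the cost of a derivative computation rather than a one-line appeal to continuity. For the purposes of Lemma~\ref{lem 2 w-Kirchhoff} either suffices, since there the parameter $\delta_0$ can be shrunk as needed; the sharper range you prove is not actually exploited in the paper.
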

\begin{proof}
We define the function $f:[0,\pi/2]\rightarrow \RR$ by
$$
f(\alpha) =
\left\{
\aligned
&\frac{\sqrt{1-\cos\alpha}}{\alpha},\quad \alpha>0
\\
&\frac{\sqrt{2}}{2},\quad \alpha = 0
\endaligned
\right.
$$
Then we see that $f$ is continuous. So we see that there exists a constant $\alpha_0>0$ such that $f(\alpha)\geq \frac{1}{2}$.
\end{proof}
%\marginpar{verified}

\section{Hessian form of wave component}\label{sec other-tools}
As we explained in introduction, we need to establish more precise  bound on the Hessian form of the wave component and refined estimate of Klein-Gordon component near the light-cone. Both of them are based on semi-hyperboloidal decomposition of the wave operator (which are explained in \cite{M1}). Here we just give a sketch. We first recall that
\begin{equation}\label{eq 1 wave-hessian}
(s/t)^2\del_t\del_t u + \frac{2x^a}{t}\delu_a \del_t u - \sum_a\delu_a\delu_a u - \frac{r^2}{t^3}\del_tu + \frac{3}{t}\del_tu = \Box u.
\end{equation}
Let us consider the wave equation
$$
\Box u = f,
$$
then \eqref{eq 1 wave-hessian} becomes
\begin{equation}\label{eq 2 wave-hessian}
(s/t)^2\del_t\del_t u = f - R_1[u]
\end{equation}
where
$$
R_1[u]:=t^{-1}\left( \frac{2x^a}{t}L_a\del_t u - \sum_aL_a\delu_a u - \frac{r^2}{t^2}\del_tu + 3\del_tu\right)
$$

In our case, $f$ is a quadratic form thus has better decay. The rest term has an extra decay factor $t^{-1}$. Thus \eqref{eq 2 wave-hessian} indicates that $(s/t)^2\del_t\del_t u$ has better decay than $\del_\alpha u$.

Furthermore, we remark that
\begin{equation}\label{eq 3 wave-hessian}
\del_a\del_t = \delu_a\del_t - \frac{x^a}{t}\del_t\del_t,\quad \del_a\del_b = \delu_a\delu_b - \Big(\frac{x^b}{t}\delu_a\del_t + \frac{x^a}{t}\del_t\delu_b\Big) + \frac{x^ax^b}{t^2}\del_t\del_t - \frac{\delta_{ab}}{t}\del_t.
\end{equation}
%\marginpar{verified}

%Then we consider the Klein-Gordon component:
%$$
%\Box v + v = g
%$$
%then \eqref{eq 1 wave-hessian}
%\begin{equation}\label{eq 1 Kg-s/t}
%v = g - (s/t)^2\del_t\del_t v - R_1[v].
%\end{equation}
%Form this identity we see that $v$ is expressed by the linear combination of a quadratic term $g$, a fast decreasing term $R_1[v]$ and a term $(s/t)^2\del_t\del_t v$. Thus we can see that $v\sim (s/t)^2\del_t\del_t v$. This leads to a fast decay rate near $\del \Kcal$ where $(s/t)$ becomes non-trivial.

\section{Initialization of the bootstrap argument}\label{sec bootstrap}
\subsection{Bootstrap argument and energy bounds}
In this section we initialize the bootstrap argument. We remark that
$$
E(2,\del^IL^J u)^{1/2},\quad
E_1(2,\del^IL^J v)^{1/2}
$$
are determined by the initial data
$$
u_0,u_1,v_0,v_1.
$$
Let $\vep$ be a positive constant, sufficiently small, and suppose that $\forall |I|\leq N, N\geq 11$ and $ \alpha =0,1,2,3$,
$$
\aligned
\|\del_{\alpha}\del^I u_0\|_{L^2(\Hcal_2)} + \|\del^I u_1\|_{L^2(\Hcal_2)}\leq& \vep,
\\
\|\del_{\alpha}\del^I v_0\|_{L^2(\Hcal_2)} + \|\del^I v_0\|_{L^2(\Hcal_2)} + \|\del^I v_1\|_{L^2(\Hcal_2)}\leq& \vep.
\endaligned
$$
We observe that when $\vep$ sufficiently small (i.e. $\exists \vep_0>0, \forall 0\leq \vep\leq \vep_0$), there exists a positive constant $C_0$ such that
$$
E(2,\del^IL^J u)^{1/2} \leq C_0\vep,\quad E_1(2,\del^IL^J v)^{1/2}\leq C_0\vep, \text{ for } |I|+|J|\leq N
$$
where $C_0$ is a constant determined by $N$ and the system. We denote by $[2,T)$ the maximal (hyperbolic) time interval of existence of the local solution associated to $(u_0,u_1,v_0,v_1)$. We will prove that $T=\infty$. To do so, let us suppose that $T<\infty$.

We suppose that on a time interval $[2,s_1]\subset[2,T)$, the following energy bounds hold:
\begin{equation}\label{eq 1 bootstrap}
E(s,\del^IL^J u)^{1/2} \leq C_1\vep s^{\delta},\quad E_1(s,\del^IL^J v)^{1/2}\leq C_1\vep s^{\delta},\quad |I|+|J|\leq N.
\end{equation}
In these bounds, $(C_1,\vep)$ is a pair of positive constants, satisfying $C_1\vep\leq 1$.  $0<\delta<1/10$ is a fixed constant. We denote by
$$
s^*:=\sup_{T>s_1\geq 2}\{\text{\eqref{eq 1 bootstrap} holds on}[2,s_1]\}.
$$
When choosing $C_1>C_0$, by continuity, $s^*>2$.

Suppose that we can establish the following {\bf refined} energy bounds on the same interval
\begin{equation}\label{eq 2 bootstrap}
E(s,\del^IL^J u)^{1/2}\leq \frac{1}{2}C_1\vep s^{\delta},\quad E_1(s,\del^IL^J v)^{1/2}\leq \frac{1}{2}C_1\vep s^{\delta},\quad |I|+|J|\leq N.
\end{equation}
By continuity, if $s^*<T$, we see that at the time $s^*$, at least one of the inequality in \eqref{eq 1 bootstrap} becomes equality, and this contradicts the above \eqref{eq 2 bootstrap}. So we conclude that $s^*=T$. Remark that $T$ is the maximal time of existence, and we see that
$$
\lim_{s\rightarrow T^-}\big(E(s,\del^IL^J u)^{1/2} + E_1(s,\del^IL^J v)^{1/2}\big) <\infty,\quad |I|+|J|\leq N.
$$
By standard local theory, we see that this is not possible when $T$ is the maximal time of existence (one can construct a local solution form the time $T-\eta$ and extends the local solution on a larger interval). Thus $T = \infty$ which guarantees theorem \ref{thm main}. The rest of this article is devoted to the proof of \eqref{eq 2 bootstrap} based on \eqref{eq 1 bootstrap}.

We will first  establish some basic bounds based on \eqref{eq 1 bootstrap}. They are basic $L^2$ bounds and $L^{\infty}$ bounds. We state them in the following two subsections.
\subsection{Basic $L^2$ bounds}
These are direct from the expression of the energy. We see that for $|I|+|J|\leq N$:
\begin{equation}\label{eq 1 L2-basic}
\|(s/t)\del_{\alpha}\del^IL^J u\|_{L^2(\Hcal_s)}\leq CC_1\vep s^{\delta},\quad \|(s/t)\del_{\alpha}\del^IL^J v\|_{L^2(\Hcal_s)}\leq CC_1\vep s^{\delta}.
\end{equation}
\begin{equation}\label{eq 2 L2-basic}
\|\delu_a\del^IL^J u\|_{L^2(\Hcal_s)}\leq CC_1\vep s^{\delta},\quad \|\delu_a\del^IL^J v\|_{L^2(\Hcal_s)}\leq CC_1\vep s^{\delta}
\end{equation}
and
\begin{equation}\label{eq 3 L2-basic}
\|\del^IL^J v\|_{L^2(\Hcal_s)}\leq CC_1\vep s^{\delta}.
\end{equation}
For $|I|+|J|\leq N-1$
\begin{equation}\label{eq 4 L2-basic}
\|\del_\alpha\del^IL^J v\|_{L^2(\Hcal_s)}\leq CC_1\vep s^{\delta}.
\end{equation}

Then by \eqref{ineq 1.1 comm}, we see that for $|I|+|J|\leq N$,
\begin{equation}\label{eq 5 L2-basic}
\|(s/t)\del^IL^J \del_{\alpha} u\|_{L^2(\Hcal_s)}\leq CC_1\vep s^{\delta},\quad \|(s/t)\del^IL^J\del_{\alpha}v\|_{L^2(\Hcal_s)}\leq CC_1\vep s^{\delta}.
\end{equation}
By \eqref{ineq 2.1 comm} and \eqref{eq 4 L2-basic}, for $|I|+|J|\leq N-1$:
\begin{equation}\label{eq 6 L2-basic}
\|t\del^IL^J \delu_a v\|_{L^2(\Hcal_s)} + \|t\delu_a\del^IL^J v\|_{L^2(\Hcal_s)}\leq CC_1\vep s^{\delta}.
\end{equation}
%\marginpar{Verified}

\subsection{Basic $L^\infty$ bounds}\label{subsec decay-basic}
By the Klainerman-Sobolev inequality \eqref{ineq 1 KS}, \eqref{ineq 1.1 comm}, \eqref{eq 1 lem 2 homo} and the above basic $L^2$ bounds, the following $L^{\infty}$ bounds are immediate. In $\Kcal$, for $|I|+|J|\leq N-2$:
\begin{equation}\label{ineq 1 Linfty-basic}
|\del^IL^J\del_{\alpha}u|\leq CC_1\vep s^{-1+\delta},\quad |\del^IL^J\del_{\alpha}v|\leq CC_1\vep s^{-1+\delta}.
\end{equation}
\begin{equation}\label{ineq 2 Linfty-basic}
|\del^IL^J \delu_a u|\leq CC_1\vep t^{-1}s^{\delta},\quad |\del^IL^J \delu_a v|\leq CC_1\vep t^{-1}s^{\delta}.
\end{equation}
\begin{equation}\label{ineq 3 Linfty-basic}
|\del^IL^J v|\leq CC_1\vep t^{-1}s^{\delta}.
\end{equation}

And by \eqref{ineq 2.1 comm} and the above bound \eqref{ineq 3 Linfty-basic}, for $|I|+|J|\leq N-3$,
\begin{equation}\label{ineq 4 Linfty-basic}
|\del^IL^J\delu_a v|\leq CC_1\vep t^{-2}s^{\delta}.
\end{equation}

Furthermore, we also have, for $|I|+|J|\leq N-3$,
\begin{equation}\label{ineq 1 Linfty-rad}
|\delu_a\del^IL^J u|\leq CC_1\vep (s/t)^2s^{-1+\delta}.
\end{equation}
To see this, we first remark that
\begin{equation}\label{ineq 2 Linfty-rad}
\aligned
\left|\del_r\delu_a\del^IL^J u\right| =& \left|(x^b/r)\del_b\left(t^{-1}L_a\del^IL^Ju\right)\right| \leq t^{-1}\sum_b\left|\del_bL_a\del^IL^J u\right|
\\
\leq& CC_1\vep t^{-1}s^{-1+\delta}\sim CC_1\vep (t-r)^{-1/2+\delta/2}t^{-3/2+\delta/2}
\endaligned
\end{equation}
where in the last inequality we have applied \eqref{ineq 1.1 comm} combined with the above $L^{\infty}$ bounds \eqref{ineq 1 Linfty-basic}. Then we integrate \eqref{ineq 2 Linfty-rad} along the ray radial ray and see that \eqref{ineq 1 Linfty-rad} is established.
%\marginpar{Strictly verified till here.}

\subsection{Basic $L^{\infty}$ bound on Null quadratic form}
%\marginpar{New subsection added}
We consider the $L^{\infty}$ bound of the following term:
$$
\del^IL^J\left(P^{\alpha\beta}\del_{\alpha}u\del_{\beta}u\right)
$$
where we recall that $P^{\alpha\beta}$ is a null quadratic form.
Recall that
$$
P^{\alpha\beta}\del_{\alpha}u\del_{\beta}u = \Pu^{\alpha\beta}\delu_{\alpha}u\delu_{\beta}u = \Pu^{00}\del_tu\del_tu + \Pu^{a0}\delu_a\del_tu + \Pu^{0a}\del_tu\delu_au + \Pu^{ab}\delu_au\delu_bu
$$
and
$$
\del^IL^J\big(\Pu^{\alpha\beta}\delu_{\alpha}u\delu_{\beta}u\big) = \sum_{I_1+I_2+I_3=I\atop J_1+J_2+J_3=J}\del^{I_1}L^{J_1}\Pu^{\alpha\beta}\cdot \del^{I_2}L^{J_2}\delu_{\alpha}u\cdot\del^{I_3}L^{J_3}\delu_{\beta}u.
$$

Now we recall the following property for null quadratic form (see \cite{LM1} proposition 4.1.1): in $\Kcal$,
\begin{equation}\label{eq 1 null}
\big|\del^IL^J\Pu^{00}\big|\leq C(I,J)(s/t)^2.
\end{equation}
Also recall that $\Pu^{\alpha\beta}$ are homogeneous of degree zero, thus in $\Kcal$
$$
\big|\del^IL^J\Pu^{\alpha\beta}\big|\leq C(I,J)t^{-|I|}\leq C(I,J).
$$
Then recall the $L^{\infty}$ bounds \eqref{ineq 1 Linfty-basic} and \eqref{ineq 1 Linfty-rad} (combined with commutator \eqref{ineq 2.2 comm}) and substitute these bounds into the expression of $\del^IL^J\big(P^{\alpha\beta}\del_{\alpha}u\del_{\beta}u\big)$, we obtain that for $|I|+|J|\leq N-3$,
\begin{equation}\label{ineq 1 Linfty-null}
\big|\del^IL^J\big(P^{\alpha\beta}\del_{\alpha}u\del_{\beta}u\big)\big|\leq C(I,J)(C_1\vep)^2(s/t)^2s^{-2+2\delta}.
\end{equation}

\section{Estimates on Hessian form of wave component}\label{sec hessian}
This is based on the combination of \eqref{eq 2 wave-hessian} together with the basic $L^{\infty}$ bounds established in the above section. We derive the wave equation with respect to $\del^IL^J$ and see that
\begin{equation}\label{eq 3 w-hessian}
\Box \del^IL^J u = \del^IL^J \left(f(v,\del v)\right)
\end{equation}
where
$$
f(v,\del v) : = v\left(A_1^{\alpha}\del_{\alpha}v + R v\right) + Q^{\alpha\beta}\del_{\alpha}v\del_{\beta}v
$$
Thus by \eqref{eq 2 wave-hessian}
\begin{equation}\label{eq 4 w-hessian}
(s/t)^2\del_t\del_t \del^IL^Ju = \del^IL^J\left(f(v,\del v)\right) - R_1[\del^IL^J u].
\end{equation}
Then we state the following bounds:
\begin{lemma}\label{lem 1 w-hessian}
Under the bootstrap assumption \eqref{eq 1 bootstrap}, in $\Kcal$ for $|I|+|J|\leq N-3$
\begin{equation}\label{eq 1 lem 1 w-hessian}
|\del^IL^J(f(v,\del v))|\leq C(C_1\vep)^2(s/t)^2s^{-2+2\delta},
\end{equation}
and for $|I|+|J|\leq N-3$
\begin{equation}\label{eq 2 lem 1 w-hessian}
|R_1[\del^IL^J u]|\leq CC_1\vep (s/t)s^{-2+\delta}.
\end{equation}
\end{lemma}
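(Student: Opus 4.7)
The plan is to treat the two estimates independently, with the first requiring an upgraded pointwise bound on $\del v$ in order to recover the weighted prefactor $(s/t)^2$.

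For the second estimate \eqref{eq 2 lem 1 w-hessian}, I would expand
$$R_1[\del^IL^J u] = t^{-1}\Bigl(\tfrac{2x^a}{t}L_a\del_t\del^IL^J u - \sum_a L_a\delu_a\del^IL^J u - \tfrac{r^2}{t^2}\del_t\del^IL^J u + 3\del_t\del^IL^J u\Bigr)$$
and bound the four pieces separately. Since $|x^a/t|\leq 1$ and $|r/t|\leq 1$ in $\Kcal$, and after absorbing the extra boost $L_a$ into the multi-index $J$ via the commutator identity \eqref{eq 1 comm}, each term reduces to $Ct^{-1}|\del_t\del^{I'}L^{J'}u|$ or $Ct^{-1}|\delu_a\del^{I'}L^{J'}u|$ with $|I'|+|J'|\leq N-2$. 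Applying \eqref{ineq 1 Linfty-basic} and \eqref{ineq 2 Linfty-basic} bounds these by $CC_1\vep t^{-1}s^{-1+\delta}=CC_1\vep(s/t)s^{-2+\delta}$ and $CC_1\vep t^{-2}s^\delta$ respectively; since $s\leq t$ in $\Kcal$ the second is dominated by the first, which yields \eqref{eq 2 lem 1 w-hessian}.

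For the first estimate \eqref{eq 1 lem 1 w-hessian}, I would apply the Leibniz rule to distribute $\del^IL^J$ over each of the three quadratic structures appearing in $f$: $v^2$, $v\cdot\del_\alpha v$, and $\del_\alpha v\cdot\del_\beta v$. The key observation is that in $\Kcal$ one has $t^{-1}=(s/t)s^{-1}$, so the zeroth-order bound \eqref{ineq 3 Linfty-basic} reads $|\del^{I'}L^{J'}v|\leq CC_1\vep(s/t)s^{-1+\delta}$; each appearance of $v$ thus contributes a factor of $(s/t)$. To obtain a matching factor from each appearance of $\del v$, I would combine the Klainerman--Sobolev inequality \eqref{ineq 1 KS} with the unweighted $L^2$ bound \eqref{eq 4 L2-basic}, upgrading the pointwise estimate on derivatives of the Klein--Gordon component to $|\del^{I'}L^{J'}\del_\alpha v|\leq CC_1\vep t^{-1}s^\delta=CC_1\vep(s/t)s^{-1+\delta}$ for $|I'|+|J'|\leq N-3$. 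Multiplying two such refined factors in each of the three quadratic summands then gives $C(C_1\vep)^2(s/t)^2 s^{-2+2\delta}$, establishing \eqref{eq 1 lem 1 w-hessian}.

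The main obstacle is producing the full $(s/t)^2$ prefactor in the term $Q^{\alpha\beta}\del_\alpha v\del_\beta v$, which contains no explicit $v$ factor; since $Q^{\alpha\beta}$ is not assumed to be a null form, the basic pointwise bound from \eqref{ineq 1 Linfty-basic} alone yields only $s^{-2+2\delta}$, which is too weak by a factor of $(s/t)^2$. The resolution relies on the Klein--Gordon structure, which supplies through \eqref{eq 4 L2-basic} an unweighted $L^2$ control of $\del_\alpha v$ that upon Klainerman--Sobolev produces the additional $(s/t)$ factor per derivative factor and closes the estimate.
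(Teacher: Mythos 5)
Your proposal is correct and follows essentially the same route as the paper, and it is worth noting that you are more careful than the printed text at exactly the one place where care is needed. The paper's proof concludes that the quadratic terms such as $\del^{I_1}L^{J_1}\del_\alpha v\cdot\del^{I_2}L^{J_2}\del_\beta v$ are bounded by $C(C_1\vep)^2 t^{-2}s^{2\delta}$ ``by the basic $L^\infty$ bounds,'' but the bound \eqref{ineq 1 Linfty-basic} as explicitly stated only gives $|\del^{I'}L^{J'}\del_\alpha v|\leq CC_1\vep\, s^{-1+\delta}$, hence only $s^{-2+2\delta}$ for this product; this is short of the required $(s/t)^2 s^{-2+2\delta}$. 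You correctly identify that the missing $(s/t)^2$ factor comes from the Klein--Gordon mass term: since $E_1$ controls $v$ in $L^2$ without the $(s/t)$ weight, \eqref{eq 4 L2-basic} gives $\|\del_\alpha\del^IL^J v\|_{L^2(\Hcal_s)}\leq CC_1\vep s^\delta$ for $|I|+|J|\leq N-1$, and feeding this into the Klainerman--Sobolev inequality \eqref{ineq 1 KS} (using \eqref{eq 1 comm} to move the extra derivatives through) yields the improved pointwise bound $|\del^{I'}L^{J'}\del_\alpha v|\leq CC_1\vep\, t^{-1}s^\delta=(s/t)s^{-1+\delta}CC_1\vep$ for $|I'|+|J'|\leq N-3$. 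Your treatment of the second estimate on $R_1$ --- pulling out the overall $t^{-1}$, absorbing the extra $L_a$ via \eqref{eq 1 comm}--\eqref{eq 2 comm} into the multi-indices (still within the range $N-2$), and then applying \eqref{ineq 1 Linfty-basic} and \eqref{ineq 2 Linfty-basic} --- is exactly what the paper does.
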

\begin{proof}
For the bound of $f$, we see that it is a bilinear form of $v$ and $\del v$ thus when we derive $f$ with respect to $\del^IL^J$, we see that $\del^IL^J f$ is a linear combination of quadratic terms of the following form:
$$
\del^{I_1}L^{J_1}v\del^{I_2}L^{J_2}v,\quad \del^{I_1}L^{J_1}\del_{\alpha}v\del^{I_2}L^{J_2}\del_{\beta}v,\quad \del^{I_1}L^{J_1}v\del^{I_2}L^{J_2}\del_{\alpha}v.
$$
We see that by the basic $L^{\infty}$ bounds, the above terms are bounded by $C(C_1\vep)^2t^{-2}s^{2\delta}$, thus \eqref{eq 1 lem 1 w-hessian} is established.

For $R_1$, we see that
$$
R_1[\del^IL^Ju]:=t^{-1}\left( \frac{2x^a}{t}L_a\del_t \del^IL^Ju - \sum_aL_a\delu_a \del^IL^Ju - \frac{r^2}{t^2}\del_t\del^IL^Ju + 2\del_t\del^IL^Ju\right)
$$
We remark that there is a decreasing $t^{-1}$ factor.  So when $|I|+|J|\leq N-3$, by the estimates of commutators \eqref{ineq 1.1 comm}, \eqref{ineq 2.1 comm} and \eqref{ineq 2.2 comm}, and the basic $L^{\infty}$ bounds established in subsection \ref{subsec decay-basic}, we see that \eqref{eq 2 lem 1 w-hessian} is established.
\end{proof}

Then we establish the following bound by \eqref{eq 4 w-hessian}: for $|I|+|J|\leq N-3$,%\marginpar{$|C_1\vep|\leq 1$}
\begin{equation}\label{eq 5 w-hessian}
|\del_t\del_t \del^IL^J u|\leq CC_1\vep (s/t)^{-1}s^{-2+2\delta}.
\end{equation}

We remark that
$$
\delu_a\del_\alpha\del^IL^Ju = t^{-1}L_a\del_\alpha\del^IL^Ju
$$
and combined by the estimates of commutators \eqref{ineq 1.1 comm} and the fact that $\del_b$ is a linear combination of $\del_{\alpha}$ with homogeneous coefficients of degree zero,
\begin{equation}\label{eq 6 w-hessian}
|\delu_a\del_t \del^IL^J u| + |\del_t\delu_a \del^IL^J u| + |\delu_a\delu_b\del^IL^Ju| \leq CC_1\vep(s/t)s^{-2+\delta}.
\end{equation}
Thus by \eqref{eq 3 wave-hessian} and basic decay bounds,
$$
|\del_{\alpha}\del_{\beta}\del^IL^Ju|\leq CC_1\vep (s/t)^{-1}s^{-2+2\delta}.
$$
Then by the estimates of commutator \eqref{ineq 1.1 comm},
\begin{equation}\label{eq 7 w-hessian}
|\del^IL^J\del_{\alpha}\del_{\beta}u|\leq CC_1\vep (s/t)^{-1}s^{-2+2\delta},\quad |I|+|J|\leq N-3.
\end{equation}
\section{$L^{\infty}$ estimate on Klein-Gordon component}\label{sec KG-infty}
\subsection{Technical preparations}
We recall \eqref{eq 1 KG-infty}
$$
\Box w + w = -P^{\alpha\beta}\left(2\del_{\alpha}u\del_{\beta}\Box u + m^{\alpha'\beta'}\del_{\alpha'}\del_{\alpha}u\del_{\beta'}\del_{\beta}u\right)
$$
which leads to
\begin{equation}\label{eq 2 Kg-infty}
\Box w + w =- P^{\alpha\beta}m^{\alpha'\beta'}\del_{\alpha'}\del_{\alpha}u\del_{\beta'}\del_{\beta}u -P^{\alpha\beta}\left(2\del_{\alpha}u\del_{\beta}\left(f(v,\del v)\right)\right)
\end{equation}
and we derive the above equation with respect to $\del^IL^J$:
\begin{equation}\label{eq 3 Kg-infty}
\Box \del^IL^J w + \del^IL^J w = \del^IL^J B + \del^IL^J T
\end{equation}
with bilinear term
$$
B := P^{\alpha\beta}m^{\alpha'\beta'}\del_{\alpha'}\del_{\alpha}u\del_{\beta'}\del_{\beta}u
$$
and trilinear term
$$
T(\del u,v,\del v,\del\del v) := P^{\alpha\beta}\del_{\alpha}u \cdot \del_{\beta}\left(v(A^{\gamma}\del_{\gamma}v + Rv) + Q^{\alpha'\beta'}\del_{\alpha'}v\del_{\beta'}v\right)
$$

The quadratic term $B$ is a null term, we make the following null decomposition:
$$
\aligned
B =& P^{\alpha\beta}m^{\alpha'\beta'}\del_{\alpha'}\del_{\alpha}u\del_{\beta'}\del_{\beta}u
= P^{\alpha\beta}\minu^{\alpha'\beta'}\delu_{\alpha'}\del_{\alpha}u\delu_{\beta'}\del_{\beta}u
\\
=& P^{\alpha\beta}\minu^{00}\del_t\del_{\alpha}u\del_t\del_{\beta}u + 2P^{\alpha\beta}\minu^{a0}\delu_a\del_{\alpha}u\del_t\del_{\beta}u
+P^{\alpha\beta}\minu^{ab}\delu_a\del_{\alpha}u\delu_b\del_{\beta}u
\\
=& B^{00} + 2B^{a0} + B^{ab}
\endaligned
$$
with
$$
B^{\alpha'\beta'}:=P^{\alpha\beta}\minu^{\alpha\beta}\delu_{\alpha'}\del_{\alpha}u\delu_{\beta'}\del_{\beta}u.
$$

Now we recall the $L^{\infty}$ estimate of Klein-Gordon equation. This is firstly applied in \cite{Kl1}(Klainerman) in $3+1$ space-time dimension and then applied in many context. This is based on the following identity:
\begin{equation}\label{eq 3 KG-linfty}
s^{-1}\left(\del_s + (x^a/s)\delu_a\right)^2(sw) = \Box w + S_1[w]
\end{equation}
with $\del_s := (s/t)\del_t$ and
$$
S_1[w] :=  \sum_a\delu_a\delu_aw  + \frac{x^ax^b}{s^2}\delu_a\delu_b w + \frac{2x^a}{s^2}\delu_a w.
$$
%\marginpar{verified!}
Combine this with \eqref{eq 2 Kg-infty}, we see that
\begin{equation}\label{eq 4 KG-linfty}
\left(\del_s + (x^a/s)\delu_a\right)^2(s\del^IL^J w) + s\del^IL^J w = sS_1[\del^IL^J w] + s\del^IL^J\left(B + T\right).
\end{equation}
Taking a point $(t_1,x_1)\in \Kcal_{[2,s_1]}$ with $s_1^2 = t_1^2-r_1^2$, we denote by $\gamma(s;t_1,x_1)$ the integral curve of $\del_s + (x^a/s)\delu_a$ passing $(t_1,x_1)$ with $\gamma(s_1;t_1,x_1) = (t_1,x_1)$. We remark that for a point $(t,x)\in \gamma(s,t_1,x_1)$, the quantity $(r/t)$ is constant (with respect to $s$):
$$
\left(\del_s + (x^a/s)\delu_a\right)(r/t) = s^{-1}\left(t\del_t + r\del_r\right)(r/t) = 0.
$$
That is, $\gamma$ is the segment from the origin to $(t_1,x_1)$. We denote by $\gamma(s_0;t_1,r_1)$ the point at which $\gamma(s;t_1,r_1)$ enters the region $\Kcal_{[2,s_1]}$. Then we remark that for $(t_1,x_1)\in\Kcal_{[2,s_1]}$ with $r_1/t_1>3/5$, $\gamma(s;t_1,x_1)$ enters $\Kcal_{[2,s_1]}$  by intersecting the conical boundary $\del\Kcal := \{t=r+1\}$ at $s_0 = \sqrt{\frac{t_1+r_1}{t_1-r_1}}$ (that is, for $s_0\leq s\leq s_1$, $\gamma(s;t_1,r_1)\in \Kcal_{[2,s_1]}$) and when $r_1/t_1\leq 3/5$, $\gamma$ enters $\Kcal_{[2,s_1]}$ by intersecting $\Hcal_2$ at $s_0=2$.

Denote by $W_{t_1,x_1}(s) := w|_{\gamma(s;t_1,x_1)}$. We see that \eqref{eq 4 KG-linfty} leads to
$$
W_{t_1,x_1}''(s) + W_{t_1,x_1} (s) = F_{t,x}(s)
$$
with
$$
F_{t_1,x_1}(s): = s\left(S_1[\del^IL^J w] + \del^IL^J\left(B + T\right)\right)|_{\gamma(s;t_1,x_1)}
$$
Then we integrate the above ODE on the interval $[s_0,s_1]$. By standard ODE argument we see that
\begin{equation}\label{eq 5 KG-linfty}
|W_{t_1,x_1}(s_1)| + |W'_{t_1,x_1}(s_1)|\leq C\left(|W_{t_1,x_1}(s_0)| + |W'_{t_1,x_1}(s_0)|\right) + C\int_{s_0}^{s_1}|F_{t_1,x_1}(s)|ds.
\end{equation}
We remark that
$$
\aligned
W'_{t,x}(s) =& \left(\del_s + (x^a/s)\delu_a\right)\left(s\del^IL^J w(t,x)\right)
\\
=&\del^IL^J w(t,x) + s \left(\del_s + (x^a/s)\delu_a\right)\del^IL^J w(t,x).
\endaligned
$$
So above estimate can be translated into the following form:
\begin{equation}\label{eq 6 KG-linfty}
|s\del^IL^J w(t,x)| + \big|s\left(\del_s + (x^a/s)\delu_a\right)\left(\del^IL^J w\right)(t,x)|\leq
\left\{
\aligned
&CC_0\vep + \int_2^s|F_{t,x}(\lambda)|d\lambda,\quad r/t\leq 3/5,
\\
&\int_{s_0}^s|F_{t,x}(\lambda)|d\lambda,\quad 1>r/t>3/5
\endaligned
\right.
\end{equation}
with $s_0 = \sqrt{\frac{t+r}{t-r}}$. Here we applied the fact that when $\gamma(s_0;t,x)\in \del\Kcal$, $W_{t,x}(s_0) = W_{t,x}'(s_0) = 0$ and when $\gamma(s_0;t,x)\in\Hcal_2$, $W_{t,x}(s_0), W_{t,x}'(s_0)$ are determined by the initial data thus bounded by $CC_0$.  We recall the relation
$$
s^2t^{-1}\big|\del_tw(s,x)\big| \leq  \big|s\left(\del_s + (x^a/s)\delu_a\right)w(t,x)| + |L_aw(t,x)|
$$
So
\begin{equation}\label{eq 6' KG-linfty}
\aligned
|s\del^IL^J w(t,x)| + (s/t)|s\del_t \del^IL^J w(t,x)|\leq
\left\{
\aligned
&CC_0\vep + |L_a\del^IL^Jw(t,x)| + \int_2^s|F_{t,x}(\lambda)|d\lambda,\quad r/t\leq 3/5,
\\
&|L_a\del^IL^Jw(t,x)| + \int_{s_0}^s|F_{t,x}(\lambda)|d\lambda,\quad 1>r/t>3/5.
\endaligned
\right.
\endaligned
\end{equation}

The rest task is to bound the right-hand-side $F_{t,x}$. This will be done in the following subsection.

\subsection{The $(s/t)t^{-1}$ bound on $v$ and $\del v$}
We remark that $F_{t_1,x_1}\sim s\del^IL^J(B+T) + sS_1[\del^IL^J w]$. The essential work is to bound $\del^IL^JB$ and $\del^IL^J T$ and $S_1[\del^IL^J w]$. These are concluded in the following two lemmas:
\begin{lemma}\label{lem 1 KG-linfty}
Under the bootstrap assumption \eqref{eq 1 bootstrap}, the following bounds holds for $|I|+|J|\leq N-3$:
\begin{equation}
|\del^IL^J B| \leq C(C_1\vep)^2 (s/t)s^{-4+4\delta} ,\quad |\del^IL^J T|\leq C(C_1\vep)^3 (s/t)^2s^{-3+3\delta}.
\end{equation}
\end{lemma}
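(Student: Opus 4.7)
The proof splits into bounds on $\del^IL^J B$ and $\del^IL^J T$, each exploiting the null structure of $P^{\alpha\beta}$ (plus, for $B$, that of $m^{\alpha'\beta'}$), combined with the refined Hessian bounds of Section~\ref{sec hessian} for $u$ and the basic $L^\infty$ bounds of subsection~\ref{subsec decay-basic} for $v$. A useful preliminary observation, used in both arguments, is that a null form differentiated by $\del^IL^J$ produces---modulo lower-order commutator terms of the same structure---again a sum of null forms: the commutator identities \eqref{eq 1 comm}--\eqref{eq 2 comm} allow one to move $\del^{I_j}L^{J_j}$ past the $\del_\alpha$ factors, only shifting the multi-indices to strictly lower order.

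For $B$, the critical point is the \emph{double} null structure: $P^{\alpha\beta}\xi_\alpha\xi_\beta=0$ and $m^{\alpha'\beta'}\xi_{\alpha'}\xi_{\beta'}=0$ on the null cone. After distributing $\del^IL^J$, commuting through the second derivatives, and performing two successive semi-hyperboloidal decompositions (using $|\Pu^{00}|\leq C(s/t)^2$ from \eqref{eq 1 null} and $\minu^{00}=(s/t)^2$), I would obtain
\[
\del^IL^J B = \sum_{I_1+I_2=I,\,J_1+J_2=J} c_{I_1J_1I_2J_2}\,\Pu^{\gamma\delta}\minu^{\gamma'\delta'}\,\delu_{\gamma'}\delu_\gamma\del^{I_1}L^{J_1}u\cdot\delu_{\delta'}\delu_\delta\del^{I_2}L^{J_2}u + (\text{lower order}).
\]
The worst component is the ``$00$'' one, $\Pu^{00}\minu^{00}\,\del_t\del_t\del^{I_1}L^{J_1}u\,\del_t\del_t\del^{I_2}L^{J_2}u$, which by \eqref{eq 5 w-hessian} is bounded by $C(s/t)^4\cdot\left(C_1\vep(s/t)^{-1}s^{-2+2\delta}\right)^2 = C(C_1\vep)^2(s/t)^2 s^{-4+4\delta}$. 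Any other component carries at least one tangent derivative $\delu_a$ on a second derivative of $u$, for which \eqref{eq 6 w-hessian} provides an extra $(s/t)$-factor, so these are no worse. Using $(s/t)\leq 1$ in $\Kcal$ yields the claimed bound $|\del^IL^J B|\leq C(C_1\vep)^2(s/t)s^{-4+4\delta}$.

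For $T$, set $h:=v(A^\gamma\del_\gamma v+Rv)+Q^{\alpha'\beta'}\del_{\alpha'}v\del_{\beta'}v$, so that $T=P^{\alpha\beta}\del_\alpha u\,\del_\beta h$ is a single null form in $(u,h)$. A Leibniz expansion of $\del^{I_2}L^{J_2}h$ and $\del_\alpha\del^{I_2}L^{J_2}h$, together with the basic $L^\infty$ bounds \eqref{ineq 1 Linfty-basic}--\eqref{ineq 3 Linfty-basic} on $v$ and its derivatives and the inequality $t\geq s$ in $\Kcal$, yields for $|I_2|+|J_2|\leq N-3$
\[
|\del^{I_2}L^{J_2}h|+|\del_\alpha\del^{I_2}L^{J_2}h|\leq C(C_1\vep)^2 s^{-2+2\delta},
\]
the dominant contributions being products of type $(\del v)(\del v)$, $v(\del\del v)$, $v(\del v)$. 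Null-decomposing $\del^IL^J T$ via $P^{\alpha\beta}$, the dominant piece $\Pu^{00}\del_t(\del^{I_1}L^{J_1}u)\del_t(\del^{I_2}L^{J_2}h)$ is controlled---using \eqref{ineq 1 Linfty-basic} for $\del u$---by
\[
C(s/t)^2\cdot C_1\vep s^{-1+\delta}\cdot(C_1\vep)^2 s^{-2+2\delta}=C(C_1\vep)^3(s/t)^2 s^{-3+3\delta}.
\]
The mixed and tangent--tangent components involve a factor $\delu_a\del^{I_1}L^{J_1}u$ that \eqref{ineq 1 Linfty-rad} bounds by $CC_1\vep(s/t)^2 s^{-1+\delta}$, yielding the same or better control.

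The principal difficulty is the commutator bookkeeping: every term produced by $[\del^IL^J,\del_\alpha]$ or $[\delu_\gamma,\del_\alpha]$ in the manipulations above must be checked to retain the double null structure of $B$ (or the single null structure of $T$), or else to enjoy an extra $t^{-1}$ or $s^{-1}$ factor that renders it subleading. The identities \eqref{eq 1 comm}--\eqref{eq 2 comm} together with the elementary relation $[\delu_a,\del_\alpha]=O(t^{-1})\del_t$ confirm this: the multi-indices only shift to strictly lower order, which is harmless under the assumption $|I|+|J|\leq N-3$ in which the Hessian bounds \eqref{eq 5 w-hessian}--\eqref{eq 7 w-hessian} apply.
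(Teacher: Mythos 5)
Your proof departs from the paper's in a few ways, and one of those departures is actually an improvement; but one step in your $B$-analysis is asserted too strongly.

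For $T$, your argument is \emph{more} complete than the paper's.  The paper's proof simply quotes the bounds $|\del^IL^J\del_\alpha u|\lesssim s^{-1+\delta}$ and $|\del^IL^Jv|\lesssim t^{-1}s^\delta$ and declares the result established.  That, taken literally, only yields $|\del^IL^JT|\lesssim (C_1\vep)^3s^{-3+3\delta}$ for the worst constituent $\del u\cdot\del v\cdot\del v$, which lacks both $(s/t)$ factors.  Your null decomposition of $P^{\alpha\beta}$ in $T=P^{\alpha\beta}\del_\alpha u\,\del_\beta h$, using $|\Pu^{00}|\lesssim(s/t)^2$ on the temporal component and $|\delu_a\del^{I}L^{J}u|\lesssim(s/t)^2s^{-1+\delta}$ from \eqref{ineq 1 Linfty-rad} on the others, is precisely what is needed to produce the $(s/t)^2$ factor; you also need to handle $\Pu^{0a}\del_tu\,\delu_a h$ via $\delu_a h=t^{-1}L_ah$ and $s^{-1}\leq(s/t)$, which you omit, but it works.

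For $B$, you replace the paper's single semi-hyperboloidal decomposition of $m^{\alpha'\beta'}$ (which is exact since only one frame change is applied, no commutator terms arise) by a double decomposition of both $m$ and $P$, acquiring commutator terms of the form $O(t^{-1})\,\delu_\gamma u\cdot\delu_{\delta'}\delu_\delta u$.  The double decomposition does improve the purely temporal component to $(s/t)^2s^{-4+4\delta}$, but the claim that ``any other component carries at least one tangent derivative\ldots\ so these are no worse'' is false: consider the mixed component
\[
\Pu^{a0}\,\minu^{b0}\,\delu_b\delu_a\del^{I_1}L^{J_1}u\cdot \del_t\del_t\del^{I_2}L^{J_2}u,
\]
with $\Pu^{a0},\minu^{b0}$ only bounded (not $O((s/t)^2)$), $|\delu_b\delu_a u|\lesssim(s/t)s^{-2+\delta}$ from \eqref{eq 6 w-hessian}, and $|\del_t\del_t u|\lesssim(s/t)^{-1}s^{-2+2\delta}$ from \eqref{eq 5 w-hessian}.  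The $(s/t)$ factors cancel exactly, leaving $(C_1\vep)^2 s^{-4+3\delta}$ with no $(s/t)$ at all.  The same feature is visible in the paper's own $B^{a0}$ estimate, so the lemma's statement is in fact slightly stronger than either proof achieves.  This is harmless in context: the estimate that enters $|F_{t,x}(\lambda)|\lesssim C_1\vep(s/t)\lambda^{-2+3\delta}$ only requires $|\del^IL^JB|\lesssim(s/t)s^{-3+3\delta}$, and $s^{-4+3\delta}\leq(s/t)s^{-3+3\delta}$ follows from $s^{-1}\leq(s/t)$, i.e.\ $t\leq s^2$ in $\Kcal$.  You should therefore either restate the $B$ conclusion as $\lesssim (C_1\vep)^2 s^{-4+3\delta}$ and note that this suffices downstream, or add the missing case analysis of the mixed components.
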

\begin{proof}
We recall that $\minu^{00} = \frac{s^2}{t^2}$. Remark that for $B^{00}$,
$$
\aligned
&\del^IL^J\big((s/t)^2\big)\del_t\del_{\alpha}u\cdot\del_t\del_{\beta}u\big)
\\
=& \del^{I_1}L^{J_1}\big((s/t)^2\big)\cdot \del^{I_2}L^{J_2}\del_t\del_{\alpha}u\cdot \del^{I_2}L^{J_2}\del_t\del_{\beta}u
\\
=& 2(s/t)\del^{I_1}L^{J_1}(s/t)\cdot \del^{I_2}L^{J_2}\del_t\del_{\alpha}u\cdot \del^{I_2}L^{J_2}\del_t\del_{\beta}u
\endaligned
$$
Recalling \eqref{eq 1 lem 2 homo} and the fact that in $\Kcal$, $s^{-1}\leq s/t$, we remark that by \eqref{eq 7 w-hessian}
$$
\aligned
\big|\del^IL^J\big((s/t)^2\big)\del_t\del_{\alpha}u\cdot\del_t\del_{\beta}u\big) \big|\leq C(s/t)^2\big|\del^{I_2}L^{J_2}\del_t\del_{\alpha}u\cdot \del^{I_2}L^{J_2}\del_t\del_{\beta}u\big|\leq C(C_1\vep)^2s^{-4+4\delta}.
\endaligned
$$
For the term $B^{a0}$ and $B^{aa}$, the estimates are similar, we apply the fact that $\minu^{a0}$ and $\minu^{ab}$ are homogeneous of degree zero, and \eqref{eq 6 w-hessian}.

For the term $T$, we see that it is a linear combination of the trilinear terms. In each term, there is a factor $\del_{\alpha}u$ and a quadratic form of $v$ and $\del v$. By the basic $L^{\infty}$ bounds, we see that
$$
|\del^IL^J \del_{\alpha}u|\leq CC_1\vep s^{-1+\delta},\quad |\del^IL^Jv|\leq CC_1\vep t^{-1}s^{\delta}, \quad |I|+|J|\leq N-2.
$$
Thus the desired bound on $T$ is established.
\end{proof}
%\marginpar{proof verified}

\begin{lemma}\label{lem 2 KG-infty}
Under the bootstrap assumption \eqref{eq 1 bootstrap}, the following bound holds for $|I|+|J|\leq N-4$:
\begin{equation}\label{eq 1 lem 2 KG-infty}
|S_1[\del^IL^J w]|\leq CC_1\vep (s/t)s^{-3+2\delta}.
\end{equation}
\end{lemma}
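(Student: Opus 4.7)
I will split $w = v - P^{\alpha\beta}\del_\alpha u\del_\beta u$ and estimate $S_1[\del^IL^J v]$ and $S_1[\del^IL^J(P^{\alpha\beta}\del_\alpha u\del_\beta u)]$ separately. The central tool is the identity $\delu_a g = t^{-1}L_a g$, from which a short commutator computation yields
\[
\delu_a\delu_b g = t^{-2}L_aL_b g - x^a t^{-3}L_b g.
\]
In this way each of the three pieces $\delu_a\delu_a$, $(x^ax^b/s^2)\delu_a\delu_b$ and $(x^a/s^2)\delu_a$ appearing in $S_1$ reduces to a boost-derivative of $v$ or of the null form, multiplied by an explicit coefficient in $t$, $x$ and $s$ whose smallness must be read off from the geometry of $\Kcal$.

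For the $v$-piece I plan to apply the basic $L^\infty$ bound \eqref{ineq 3 Linfty-basic} (valid up to order $N-2$) to $L_b\del^IL^J v$ and $L_aL_b\del^IL^J v$ for $|I|+|J|\leq N-4$, giving
\[
|\delu_a\del^IL^J v|\leq CC_1\vep\, t^{-2}s^{\delta},\qquad |\delu_a\delu_b\del^IL^J v|\leq CC_1\vep\, t^{-3}s^{\delta}.
\]
The singular coefficients $x^ax^b/s^2$ and $x^a/s^2$ will be tamed by the elementary geometric inequality $s^2=(t-r)(t+r)\geq t$, which holds in $\Kcal$ because $t-r\geq 1$; this gives $t/s^2\leq 1$ and $r^2/s^2\leq t^2/s^2\leq t$. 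Combined with $s\geq 2$ (which turns $s^{-2+\delta}$ into something bounded by $s^{-2+2\delta}$), each of the three pieces coming from $v$ is then controlled by $CC_1\vep\, t^{-1}s^{-2+\delta}\leq CC_1\vep (s/t)s^{-3+2\delta}$, as required.

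For the null-form piece I will use \eqref{ineq 1 Linfty-null} on $L_a\del^IL^J(P^{\alpha\beta}\del_\alpha u\del_\beta u)$ (whose total order is $\leq N-3$) to obtain $|\delu_a\del^IL^J(P\del u\del u)|\leq C(C_1\vep)^2\, t^{-1}(s/t)^2 s^{-2+2\delta}$. I expect the main obstacle to be the double-boost term $L_aL_b\del^IL^J(P\del u\del u)$, whose total order reaches $N-2$ and is therefore one order beyond the range of \eqref{ineq 1 Linfty-null}. Rather than re-deriving the null bound at this higher order, I will estimate $L_aL_b\del^IL^J(P\del u\del u)$ crudely by the plain basic $L^\infty$ bounds \eqref{ineq 1 Linfty-basic}--\eqref{ineq 2 Linfty-basic}, losing the $(s/t)^2$ null factor to obtain only $C(C_1\vep)^2 s^{-2+2\delta}$. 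The prefactor $t^{-2}$ already present in $\delu_a\delu_b=t^{-2}L_aL_b+\ldots$ absorbs this loss, since $C_1\vep\cdot t^{-1}\leq 1$. This is the key trade-off: one pays an extra factor of $t^{-1}$ in place of the null factor $(s/t)^2$ one would otherwise need. Summing the $v$ and null-form contributions and using once more $s^2\geq t$ together with $s\geq 2$ then delivers the claimed bound $|S_1[\del^IL^Jw]|\leq CC_1\vep(s/t)s^{-3+2\delta}$.
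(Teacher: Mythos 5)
Your decomposition $w = v - P^{\alpha\beta}\del_\alpha u\del_\beta u$ and the commutator identity $\delu_a\delu_b g = t^{-2}L_aL_b g - t^{-3}x^a L_b g$ are exactly the paper's route, and the overall structure is sound, but there is a gap in the $v$-piece as written. You bound the coefficients by $r^2/s^2\leq t^2/s^2\leq t$ and $|x^a|/s^2\leq t/s^2\leq 1$, i.e.\ you throw away the $s^{-2}$ factor. If you actually do that, the middle and last pieces give
\[
\Big|\frac{x^ax^b}{s^2}\delu_a\delu_b\del^IL^J v\Big|\lesssim t\cdot C_1\vep\, t^{-3}s^{\delta} = C_1\vep\, t^{-2}s^{\delta},
\qquad
\Big|\frac{2x^a}{s^2}\delu_a\del^IL^J v\Big|\lesssim 1\cdot C_1\vep\, t^{-2}s^{\delta},
\]
and $t^{-2}s^{\delta}\leq (s/t)s^{-3+2\delta}$ is \emph{false} in general (it needs $s^{2-\delta}\leq t$, which fails e.g.\ near the time axis where $s\sim t$). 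The intermediate bound $C_1\vep\,t^{-1}s^{-2+\delta}$ that you then claim is the one you get if you \emph{keep} the $s^{-2}$: $\frac{t^2}{s^2}\cdot t^{-3}s^{\delta} = t^{-1}s^{-2+\delta} = (s/t)s^{-3+\delta}$. The paper does precisely this — it retains $t^2/s^2$ — so the "$\leq t$" relaxation is not only unnecessary, it breaks the chain of inequalities as written and should be dropped.

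On the null-form piece your argument is correct but more elaborate than needed. You invoke the null estimate \eqref{ineq 1 Linfty-null} for the single-boost term and the crude bound \eqref{ineq 1 Linfty-basic} for the double boost; the paper simply uses the crude bound $|\del^{I'}L^{J'}(\del_\alpha u\del_\beta u)|\leq C(C_1\vep)^2 s^{-2+2\delta}$ at every order up to $N-2$, which together with $t^2/s^2\cdot t^{-2} = s^{-2}$ gives $C(C_1\vep)^2 s^{-4+2\delta}\leq CC_1\vep(s/t)s^{-3+2\delta}$ directly via $s^{-1}\leq s/t$. Your "trade-off" discussion about replacing $(s/t)^2$ with an extra $t^{-1}$ is really the observation $s^{-2}\leq (s/t)^2$ (equivalently $s^2\geq t$), and the $C_1\vep\cdot t^{-1}\leq 1$ step is not what actually closes the estimate — it is again the coefficient $t^2/s^2$ times $t^{-2}$ equalling $s^{-2}$. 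Once the lossy $\leq t$ step in the $v$-piece is removed, the proof is correct and essentially the paper's argument.
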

\begin{proof}
We recall that
$$
w = v - P^{\alpha\beta}\del_{\alpha}u\del_{\beta}u
$$
and we see that:
$$
S_1[\del^IL^J v] = \sum_a\delu_a\delu_a\del^IL^J v  + \frac{x^ax^b}{s^2}\delu_a\delu_b \del^IL^J v + \frac{2x^a}{s^2}\delu_a \del^IL^J v.
$$
In the right-hand-side we remark the following identities:
$$
\delu_a\delu_b f = t^{-1}L_a\left(t^{-1}L_b\right)f = t^{-2}L_aL_bf -t^{-2}(x^a/t)L_b .
$$
Then combined with the basic decay bounds \eqref{ineq 3 Linfty-basic}, we see that
\begin{equation}\label{eq 1 proof lem 2 KG-infty}
|S_1[\del^IL^J v]| \leq CC_1\vep (s/t)s^{-3+\delta}.
\end{equation}
%\marginpar{Verified}

Now we turn to the term $S_1[\del^IL^J\left(P^{\alpha\beta}\del_{\alpha}u\del_{\beta}u\right)]$.  Remark that for $|I|+|J|\leq N-2$,
\begin{equation}\label{eq 2 proof lem 2 KG-infty}
\big|\del^IL^J\big(\del_{\alpha}u\del_{\beta}u\big)\big|\leq C(C_1\vep)^2 s^{-2+2\delta}.
\end{equation}
Then we substitute this bound into the expression of $S_1$,
$$
\left|S_1[\del^IL^J \left(P^{\alpha\beta}\del_{\alpha}u\del_{\beta}u\right)]\right|\leq CC_1\vep s^{-4+2\delta}\leq CC_1\vep (s/t)s^{-3+2\delta}
$$
where we have applied the fact that
$$
s^{-1}\leq C(s/t).
$$

Then the desired result is established.
\end{proof}

Now we substitute the above bounds into \eqref{eq 6' KG-linfty}, also recall that for $|I|+|J|\leq N-3$,
$$
|L_a\del^IL^J v|\leq CC_1\vep t^{-1}s^{\delta}\leq CC_1\vep (s/t)^{2-3\delta},\quad  |F_{t,x}(\lambda)|\leq CC_1\vep (s/t)\lambda^{-2+3\delta}.
$$
then for $|I|+|J|\leq N-4$ %\marginpar{$|C_1\vep|\leq 1$}
\begin{equation}\label{eq 7 KG-linfty}
|\del^IL^Jw(t,x)|\leq CC_1\vep (s/t)^{2-3\delta}s^{-1},\quad |\del_t\del^IL^J w(t,x)|\leq CC_1\vep (s/t)^{1-3\delta}s^{-1}.
\end{equation}
Based on the bounds of $w$, we deduce the bounds on $v$. This is by recall that $w = v - P^{\alpha\beta}\del_{\alpha}u\del_{\beta}u$ and we recalling \eqref{ineq 1 Linfty-null}, then we obtain that for $|I|+|J|\leq N-4$:
%\marginpar{Bounds on $P^{\alpha\beta}$}
\begin{equation}\label{eq 8 KG-linfty}
|\del^IL^Jv(t,x)|\leq CC_1\vep (s/t)^{2-3\delta}s^{-1},\quad |\del_t\del^IL^J v(t,x)|\leq CC_1\vep (s/t)^{1-3\delta}s^{-1}.
\end{equation}

We remark that $\del_a \del^IL^Jv = \delu_a \del^IL^Jv - \frac{x^a}{t}\del_t\del^IL^Jv$, and by the fast decay of $\delu_a \del^IL^Jv = t^{-1}L_a\del^IL^J v $, we see that for $|I|+|J|\leq N-4$
\begin{equation}\label{eq 9 KG-linfty}
|\del_{\alpha}\del^IL^J v|\leq CC_1\vep (s/t)^{1-3\delta}s^{-1}.
\end{equation}
Then by estimates of commutators \eqref{ineq 1.1 comm}, we see that for $|I|+|J|\leq N-4$,
\begin{equation}\label{eq 10 KG-linfty}
|\del^IL^J \del_{\alpha}v| \leq CC_1\vep (s/t)^{1-3\delta}s^{-1}.
\end{equation}

In the same manner, for $|I|+|J|\leq N-5$,
\begin{equation}\label{eq 11 KG-linfty}
|\del_{\alpha}\del^IL^J v| + |\del^IL^J\del_{\alpha}v|\leq CC_1\vep (s/t)^{2-3\delta}s^{-1}.
\end{equation}

%\marginpar{Till here 12-04-2017 18:32}

\section{$L^{\infty}$ estimate on wave component}\label{sec w-infty}
The objective of this section is to prove the following bounds:
\begin{lemma}\label{lem 4 w-Kirchhoff}
Under the assumption of \eqref{eq 1 bootstrap}, the following bounds hold for $|I|+|J|\leq N-6$:
\begin{equation}\label{eq 1 lem 4 w-Kirchhoff}
|\delu_a\delu_a \del^IL^J u|\leq CC_1\vep(s/t)t^{-2}.
\end{equation}
\end{lemma}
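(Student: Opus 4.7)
The plan is to begin from the decomposition
$$
\delu_a\delu_a(\del^IL^Ju) = t^{-2}L_aL_a(\del^IL^Ju) - t^{-3}\,x^a\, L_a(\del^IL^Ju) \quad (\text{no sum on } a),
$$
which follows from $\delu_a = t^{-1}L_a$ together with $\delu_a(t^{-1}) = -x^a/t^3$. Since $|x^a|/t\leq 1$ in $\Kcal$, it suffices to prove that for every multi-index $K$ with $|K|\leq 2$ one has
$$
|L^K\del^IL^J u(t,x)| \leq CC_1\vep\, (s/t), \qquad |I|+|J|\leq N-6.
$$

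To obtain this, I would apply Proposition \ref{prop 1 w-Kirchhoff} to $w := L^K\del^IL^Ju$. Because $L_a$ and $\del_\alpha$ commute with $\Box$, $w$ solves $\Box w = L^K\del^IL^J f(v,\del v)$; its Cauchy data on $\Hcal_2$ is $C^\infty_c$ with support in $\Hcal_2\cap\Kcal$ (spatial traces of $u_0, u_1$, and any extra time derivatives are replaced through the equation). The content of the lemma is therefore reduced to the pointwise source bound
$$
|L^K\del^IL^J f(v,\del v)| \leq C(C_1\vep)^2\, t^{-2},
$$
which is the main obstacle.

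To handle this, I commute $L^K\del^IL^J$ into a sum of $\del^{I'}L^{J'}$ with $|I'|+|J'|\leq N-4$ and expand $f = v(A^\alpha\del_\alpha v + Rv)+ Q^{\alpha\beta}\del_\alpha v\del_\beta v$ by Leibniz, splitting the derivatives between the two factors of each quadratic product. For $N\geq 11$, the pigeonhole principle guarantees that one factor carries at most $\lfloor(N-4)/2\rfloor\leq N-5$ derivatives and so may invoke the improved pointwise bound $(s/t)^{2-3\delta}s^{-1}$ from \eqref{eq 11 KG-linfty}, while the other uses the slightly weaker $(s/t)^{1-3\delta}s^{-1}$ from \eqref{eq 10 KG-linfty} (or $(s/t)^{2-3\delta}s^{-1}$ from \eqref{eq 8 KG-linfty} for a bare $v$). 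The worst term, $\del v\cdot\del v$, is then controlled by
$$
C(C_1\vep)^2\,(s/t)^{3-6\delta}\,s^{-2} \leq C(C_1\vep)^2\,t^{-2},
$$
since $(s/t)^{3-6\delta}\cdot s^{-2}\cdot t^2 = (s/t)^{1-6\delta}\leq 1$ for $\delta<1/6$; the $v\del v$ and $v^2$ terms decay strictly faster.

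Inserting this source bound into Proposition \ref{prop 1 w-Kirchhoff} yields $|w|\leq C(C_1\vep)^2(s/t) + C_0\vep\,s^{-1}\leq CC_1\vep\,(s/t)$, using $(C_1\vep)^2\leq C_1\vep$ and $s^{-1}\leq s/t$ in $\Kcal$. Substituted back into the opening identity, this produces \eqref{eq 1 lem 4 w-Kirchhoff}. The critical point to emphasise is the source estimate: a naive application of only the basic $L^\infty$ bounds of Section \ref{sec bootstrap} would give $|\del^{I'}L^{J'}f|\lesssim s^{-2}$, which is insufficient because $s^{-2}\geq t^{-2}$ near the light cone. It is precisely the refined Klein-Gordon decay from Section \ref{sec KG-infty}, obtained via the hyperbolic decomposition and the null structure of $P^{\alpha\beta}$, that converts part of the $s$-decay into the $(s/t)$-weight required to close this estimate.
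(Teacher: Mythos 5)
Your proof is correct and follows essentially the same route as the paper: the same opening decomposition $\delu_a\delu_a = t^{-2}L_aL_a - t^{-3}x^aL_a$, the same reduction to the pointwise bound $|\del^{I'}L^{J'}u|\lesssim C_1\vep(s/t)$ for $|I'|+|J'|\leq N-4$, the same invocation of Proposition \ref{prop 1 w-Kirchhoff}, and the same way of extracting the required $t^{-2}$ source bound via the refined Klein--Gordon decays \eqref{eq 8 KG-linfty}--\eqref{eq 11 KG-linfty} and the Leibniz split with at least one factor of low order. The only cosmetic difference is that the paper states the sharper source bound $C(C_1\vep)^2(s/t)^{1-6\delta}t^{-2}$ rather than the weaker $C(C_1\vep)^2 t^{-2}$ you use, but both suffice for Proposition \ref{prop 1 w-Kirchhoff}, and your observation that the basic $L^\infty$ bounds alone ($\sim s^{-2}$) are insufficient correctly identifies the reason the refined KG estimates are needed.
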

\begin{proof}
As explained in introduction,
$$
\delu_a\delu_a\del^IL^J u = t^{-2}L_aL_a\del^IL^J u - (x^a/t^3)L_a\del^IL^Ju
$$
and thus (by the estimates of commutators) we need to establish the following bounds for $|I|+|J|\leq N-6$:
$$
|\del^IL^JL_aL_au|\leq CC_1\vep(s/t),\quad |\del^IL^JL_au|\leq CC_1\vep(s/t).
$$
And these bounds can be deduced by
\begin{equation}\label{eq 1 proof lem 4 w-Kirchhoff}
|\del^IL^J u|\leq CC_1\vep (s/t),\quad |I|+|J|\leq N-4.
\end{equation}
To prove this bound, we write the wave equation satisfied by $\del^IL^J u$:
\begin{equation}\label{eq 2 proof lem 4 w-Kirchhoff}
\Box \del^IL^J u = \del^IL^J \left(f(v,\del v)\right),\quad \big|\del_t\del^IL^Ju|_{\Hcal_2}\big|, \big|\del^IL^J u|_{\Hcal_2}\big|\sim CC_0\vep
\end{equation}
where $f(v,\del v) = v\left(A^{\alpha}\del_{\alpha}v + Rv\right) + Q^{\alpha\beta}\del_{\alpha}v\del_{\beta}v$.

We derive $f$ with respect to $\del^IL^J$, this gives a linear combination of bilinear terms composed by
$$
\del^{I_1}L^{J_1}v\del^{I_2}L^{J_2}v,\quad \del^{I_1}L^{J_1} \del_{\alpha}v\del^{I_2}L^{J_2}v,\quad \del^{I_1}L^{J_1}\del_{\alpha}v\del^{I_2}L^{J_2}\del_{\beta}v
$$
with $|I_1|+|I_2|+|J_1|+|J_2|\leq N-4$. Remark that when $N\geq 5$, either $|I_1|+|J_1|\leq N-5$ or $|I_2|+|J_2|\leq N-5$. Then by \eqref{eq 8 KG-linfty}, \eqref{eq 9 KG-linfty}, \eqref{eq 10 KG-linfty} and \eqref{eq 11 KG-linfty}, the following bound holds:
\begin{equation}\label{eq 3 proof lem 4 w-Kirchhoff}
|\del^IL^J\left(f(v,\del v)\right)|\leq C(C_1\vep)^2(s/t)^{1-6\delta}t^{-2}.
\end{equation}
Then by proposition \ref{prop 1 w-Kirchhoff}, \eqref{eq 1 lem 4 w-Kirchhoff} is established.
\end{proof}
%\marginpar{proof verified}

\section{Sharp decay estimate on $\del \del^IL^J u$}\label{sec sharp-dw}
\subsection{Algebraic preparation}
In this section we will establish the following bounds:
\begin{equation}\label{eq 1 dw-sharp}
|\del_{\alpha}\del^IL^Ju|\leq CC_1\vep s^{-1},\quad |\delu_a\del^IL^J u|\leq CC_1\vep t^{-1}, \quad \text{ for } |I|+|J|\leq N-6.
\end{equation}
Then by the estimates of commutators,
\begin{equation}
|\del^IL^J\del_{\alpha}u|\leq CC_1\vep s^{-1},\quad |\del^IL^J \delu_a u|\leq CC_1\vep t^{-1},\quad \text{ for } |I|+|J|\leq N-6.
\end{equation}
The bound on $\delu_a\del^IL^J u$ can be established as following. We see that by \eqref{ineq 2.1 comm},
$$
|\delu_a\del^IL^Ju|\leq  Ct^{-1}\sum_{|I|+|J|\leq N-5}|\del^IL^J u|.
$$
%\marginpar{Lack of a commutator $L_a,\del^IL^J$}
then by \eqref{eq 1 proof lem 4 w-Kirchhoff}, we see that for $|I|+|J|\leq N-5$,
\begin{equation}\label{eq 1.1 dw-sharp}
|\delu_a\del^IL^Ju|\leq CC_1\vep (s/t)t^{-1}.
\end{equation}
Then again by the estimates of commutators \eqref{ineq 2.1 comm},
\begin{equation}\label{eq 1.2 dw-sharp}
|\del^IL^J\delu_a u|\leq CC_1\vep(s/t)t^{-1}.
\end{equation}
Once $\delu_a\del^IL^Ju$ is bounded, by the relation $\del_a = \delu_a - (x^a/t)\del_t$, we see that the  estimates on $\del_a\del^IL^J u$ is reduced into the estimate on $\del_t\del^IL^Ju$. Thus in the following subsection we concentrate on this temporal derivative.

\subsection{Estimate on $\del_t\del^IL^Ju$}
Now we focus on the bound on $\del_t\del^IL^Ju$. This is by proposition \ref{prop 1 dw}. We derive the wave equation satisfied by $u$ with respect to $\del^IL^J$:
$$
\Box\del^IL^J u = \del^IL^J\left(f(v,\del v)\right)
$$
We recall that by \eqref{eq 1 lem 4 w-Kirchhoff} (with the notation of proposition \ref{prop 1 dw}), for $|I|+|J|\leq N-6$ (with $\delta\leq 1/10$),
$$
\left|\sum_a\delu_a\delu_a\del^IL^J u\right|\leq CC_1\vep (s/t)t^{-2}.
$$
Furthermore, for $|\del^IL^J \Box u| = \big|\del^IL^J\big(f(v,\del v)\big)\big|$, we recall that when $|I|+|J|\leq N-6$, this term is a linear combination of quadratic terms composed by $\del^IL^J v$ and $\del^IL^J\del_{\alpha}v$. Then by \eqref{eq 8 KG-linfty} and \eqref{eq 11 KG-linfty},
$$
\big|\del^IL^J \Box u\big|\leq C(C_1\vep)^2(s/t)^{4-6\delta}s^{-2}\leq C(C_1\vep)^2(s/t)t^{-2}.
$$

Substitute the above bounds into \eqref{eq 1 prop-main Linfty-dw}, we see that
\begin{equation}\label{eq 1 dw}
|R_w(\tau;t,x)| \leq CC_1\vep (s/t)^2 t^{-1}\leq CC_1\vep \frac{t-r}{t^2}.
\end{equation}
%\marginpar{$|C_1\vep|\leq 1$}
So by proposition \ref{prop 1 dw} (with the notation in this proposition),
$$
|\del_t\del^IL^J u|\leq CC_0s^{-1} + s^{-1}CC_1\vep\int_2^te^{-\int_{\tau}^t(t-r)t^{-2}|_{\gamma(\eta;t,x)}d\eta}\frac{t-r}{t^2}\bigg|_{\gamma(\tau;t,x)}d\tau.
$$
We denote by $\xi(\tau)$ the function $\frac{t-r}{t^2}\bigg|_{\gamma(\tau;t,x)}$. And the above inequality becomes:
$$
|\del_t\del^IL^J u|\leq CC_0s^{-1} + s^{-1}CC_1\vep\int_2^te^{-\int_{\tau}^t\xi(\eta)d\eta}\xi(\tau)d\tau\leq CC_0\vep s^{-1} + CC_1\vep s^{-1}.
$$
We recall that in the bootstrap assumption we have chosen that $C_1>C_0$. So we conclude that
\begin{equation}\label{eq 2 dw}
|\del_t\del^IL^J u|\leq CC_1\vep s^{-1}, \quad |I|+|J|\leq N-6.
\end{equation}
By the argument in the last subsection, we see that
\begin{equation}\label{eq 4 dw}
|\del_a\del^IL^J u|\leq CC_1\vep s^{-1}, \quad |I|+|J|\leq N-6.
\end{equation}
By the estimate of commutators, we see that
\begin{equation}\label{eq 3 dw}
|\del^IL^J\del_t u|+ |\del^IL^J\del_a u|\leq CC_1\vep s^{-1}, \quad |I|+|J|\leq N-6.
\end{equation}
%==========================================================================================================================================================

\section{Refined energy estimate and conclusion}\label{sec energy conclusion}

Now we are about to prove \eqref{eq 1 bootstrap}. This is based on the energy estimate Proposition \ref{prop energy} and the sharp $L^{\infty}$ bounds on $u$ and $v$, more precisely, on \eqref{eq 1.2 dw-sharp}, \eqref{eq 3 dw}, \eqref{eq 8 KG-linfty}, \eqref{eq 9 KG-linfty}, \eqref{eq 10 KG-linfty} and \eqref{eq 11 KG-linfty}.

For the wave component, we derive the wave equation with respect to $\del^IL^J$ with $|I|+|J|\leq N$ and see that
\begin{equation}\label{eq 1 energy-refined}
\Box \del^IL^Ju = \del^IL^Jf
\end{equation}
with
$$
f(v,\del v) : = v\left(A_1^{\alpha}\del_{\alpha}v + R v\right) + Q^{\alpha\beta}\del_{\alpha}v\del_{\beta}v
$$
We establish the following $L^2$ bound on the source term:
\begin{lemma}\label{lem 1 energy-refined}
Under the bootstrap assumption, the following bound holds:
\begin{equation}\label{eq 2 energy-refined}
\|\del^IL^J f\|_{L^2(\Hcal_s)}\leq C(C_1\vep)^2s^{-1+\delta}, \quad 2\leq s\leq s_0, \quad |I|+|J|\leq N.
\end{equation}
\end{lemma}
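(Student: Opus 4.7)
The plan is to expand $\del^IL^J f$ by the Leibniz rule into a finite linear combination of bilinear terms of the schematic forms
\[
\del^{I_1}L^{J_1}v\cdot\del^{I_2}L^{J_2}v, \qquad
\del^{I_1}L^{J_1}v\cdot\del^{I_2}L^{J_2}\del_\alpha v, \qquad
\del^{I_1}L^{J_1}\del_\alpha v\cdot\del^{I_2}L^{J_2}\del_\beta v,
\]
where $|I_1|+|J_1|+|I_2|+|J_2|\le N$. The key structural remark is pigeonhole: since $N\ge 11$, the minimum of $|I_1|+|J_1|$ and $|I_2|+|J_2|$ is at most $\lfloor N/2\rfloor\le N-6$. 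Hence in every product one of the factors is at low order (at most $N-6$) and falls into the regime of the sharp pointwise bounds \eqref{eq 8 KG-linfty}--\eqref{eq 11 KG-linfty}, while the other factor, at order $\le N$, is controlled in $L^2$ by the basic energy-type bounds \eqref{eq 3 L2-basic} and \eqref{eq 5 L2-basic}.

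The estimate itself is an $L^\infty\times L^2$ split. For the quadratic term $v\cdot v$, put the low-order factor in $L^\infty$ via \eqref{eq 8 KG-linfty}, which gives $CC_1\vep(s/t)^{2-3\delta}s^{-1}\le CC_1\vep s^{-1}$, and the high-order factor in $L^2$ via \eqref{eq 3 L2-basic}; the product is bounded by $C(C_1\vep)^2 s^{-1+\delta}$. For the mixed term $v\cdot\del v$, the crucial trick is to borrow one factor of $(s/t)$ from the sharp $L^\infty$ bound in order to form the weighted $L^2$ norm $\|(s/t)\del^{I_2}L^{J_2}\del_\alpha v\|_{L^2(\Hcal_s)}\le CC_1\vep s^\delta$ from \eqref{eq 5 L2-basic}. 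Concretely, writing the low-order factor (in either case, $v$ or $\del v$) with the improved decay $(s/t)^{2-3\delta}s^{-1}$ from \eqref{eq 8 KG-linfty} or \eqref{eq 11 KG-linfty}, one factorises
\[
|\text{low}|\cdot|\text{high}|\le CC_1\vep(s/t)^{1-3\delta}s^{-1}\cdot(s/t)|\text{high}|\le CC_1\vep s^{-1}\cdot (s/t)|\text{high}|,
\]
using $(s/t)^{1-3\delta}\le 1$ in $\Kcal$. Taking the $L^2(\Hcal_s)$ norm then yields $C(C_1\vep)^2 s^{-1+\delta}$.

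The same device handles the $\del v\cdot\del v$ term, which is the most delicate case: one needs both factors to be derivatives, so the low-order factor must be estimated by the $(s/t)^{2-3\delta}$-sharp bound \eqref{eq 11 KG-linfty} (available precisely because the low-order index is $\le N-6$, hence $\le N-5$ after absorbing the extra $\del_\alpha$), and again one factor of $(s/t)$ is extracted to combine with the $(s/t)$-weighted $L^2$ bound \eqref{eq 5 L2-basic} on the high-order factor. The main obstacle is exactly this pairing: if one only had the weaker pointwise bound of order $(s/t)^{1-3\delta}s^{-1}$ on the low-order factor, one could not extract the full $(s/t)$ needed to match the weighted $L^2$ norm, and the argument would lose a power of $s$. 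This is why the energy estimate forces $N\ge 11$ (so that the low-order factor truly satisfies the $(s/t)^{2-3\delta}$ bound of \eqref{eq 11 KG-linfty}), and why the sharp decay bounds of Section \ref{sec KG-infty} had to be pushed to include the $(s/t)^{2}$-type gain near the light cone. Summing the finitely many bilinear contributions yields the stated bound $\|\del^IL^J f\|_{L^2(\Hcal_s)}\le C(C_1\vep)^2 s^{-1+\delta}$.
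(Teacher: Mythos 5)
Your proof is correct and follows essentially the same route as the paper: Leibniz expansion, pigeonhole so that one factor has low order ($\le N-6\le N-5$), the low-order factor estimated pointwise by the sharp $(s/t)^{2-3\delta}s^{-1}$ decay from Section~\ref{sec KG-infty}, the high-order factor by the $(s/t)$-weighted $L^2$ energy bound, and a factor $(s/t)$ transferred from the $L^\infty$ bound to match the weighted norm. Your extra remark about why the weaker $(s/t)^{1-3\delta}s^{-1}$ decay would not suffice (one cannot extract a clean $(s/t)$ to pair with \eqref{eq 5 L2-basic} when the high-order factor carries $N+1$ derivatives, so only the weighted $L^2$ bound is available) is a correct and useful observation that the paper leaves implicit.
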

\begin{proof}
This is by direct calculation. We remark that $f$ is a linear combination of the quadratic form on $v$ and $\del v$. We take the term $\del_\alpha v\del_\beta v$ as an example:
$$
\del^IL^J \left(\del_{\alpha}v\del_{\beta}v\right) = \sum_{I_1+I_2=I\atop J_1+J_2=J}\del^{I_1}L^{J_1}\del_{\alpha}v\del^{I_2}L^{J_2}\del_{\beta}v.
$$
And we see that $|I_1|+|J_1|\leq [N/2]$ or $|I_2|+|J_2|\leq [N/2]$. Thus when $N\geq 11$, we see that at least one satisfies $|I_i|+|J_i|\leq N-6$. Without lose of generality, we suppose that $|I_1|+|J_1|\leq N-6$, thus by applying \eqref{eq 11 KG-linfty} and \eqref{eq 1 L2-basic} (recall that $\delta<1/10$)
$$
\aligned
\|\del^{I_1}L^{J_1}v\del^{I_2}L^{L_2}v\|_{L^2(\Hcal_s)} \leq& CC_1\vep\|(s/t)^{2-3\delta}s^{-1}\del^{I_2}L^{L_2}v\|_{L^2(\Hcal_s)}
\\
\leq& CC_1\vep s^{-1}\|(s/t)\del^{I_2}L^{J_2}v\|_{L^2}\leq C(C_1\vep)^2s^{-1+\delta}.
\endaligned
$$
The other terms of $f$ are treated in the same manner and we omit the detail.
\end{proof}

For the Klein-Gordon component, we derive the Klein-Gordon equation with respect to $\del^IL^J$ and see that
\begin{equation}\label{eq 3 energy-refined}
\Box \del^IL^J v + \del^IL^J v = \del^IL^J \left(P^{\alpha\beta}\del_{\alpha}u\del_{\beta}u\right)
\end{equation}
Then we establish the following bound on the source of the Klein-Gordon equation:
\begin{lemma}\label{lem 2 energy-refined}
Under the bootstrap assumption, the following bound holds:
\begin{equation}\label{eq 4 energy-refined}
\|\del^IL^J\left(P^{\alpha\beta}\del_{\alpha}u\del_{\beta}u\right)\|_{L^2(\Hcal_s)}\leq C(C_1\vep)^2 s^{-1+\delta}, \quad 2\leq s\leq s_0,\quad |I|+|J|\leq N.
\end{equation}
\end{lemma}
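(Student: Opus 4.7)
The plan is to expand the null quadratic form in the semi-hyperboloidal frame, apply Leibniz to $\del^IL^J$, and distribute the derivatives so that one factor carries sharp $L^\infty$ bounds while the other carries the bootstrap $L^2$ bound. First I would rewrite $P^{\alpha\beta}\del_\alpha u\,\del_\beta u = \Pu^{\alpha\beta}\delu_\alpha u\,\delu_\beta u$ and split off the ``bad'' direction:
$$
P^{\alpha\beta}\del_\alpha u\,\del_\beta u = \Pu^{00}\del_t u\,\del_t u + 2\Pu^{a0}\delu_a u\,\del_t u + \Pu^{ab}\delu_a u\,\delu_b u,
$$
then apply $\del^IL^J$ term by term, producing sums of triple products $\del^{I_1}L^{J_1}\Pu^{\alpha\beta}\cdot\del^{I_2}L^{J_2}\delu_\alpha u\cdot\del^{I_3}L^{J_3}\delu_\beta u$ with $|I_1|+|I_2|+|I_3|=|I|$ and $|J_1|+|J_2|+|J_3|=|J|$.

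Since $|I_2|+|J_2|+|I_3|+|J_3|\leq N$ and $N\geq 11$, after relabeling I may assume $|I_2|+|J_2|\leq \lfloor N/2\rfloor\leq N-6$, which is precisely the range where the sharp pointwise bounds \eqref{eq 1.2 dw-sharp} and \eqref{eq 3 dw} apply. On the remaining high-order factor I will use the weighted $L^2$ bounds \eqref{eq 1 L2-basic}, \eqref{eq 2 L2-basic}, \eqref{eq 5 L2-basic}. The coefficients are controlled via \eqref{eq 1 homogeneous}: $|\del^{I_1}L^{J_1}\Pu^{\alpha\beta}|\leq C$ for all indices, with the crucial improvement $|\del^{I_1}L^{J_1}\Pu^{00}|\leq C(s/t)^2$ coming from the null condition \eqref{eq 1 null}.

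For the $\Pu^{00}$ terms the low-order factor is $\del^{I_2}L^{J_2}\del_t u$, bounded in $L^\infty$ by $CC_1\vep s^{-1}$ via \eqref{eq 3 dw}; pairing with $\|(s/t)\del^{I_3}L^{J_3}\del_t u\|_{L^2(\Hcal_s)}\leq CC_1\vep s^\delta$, and absorbing only one of the two factors of $(s/t)$ coming from $\Pu^{00}$ (the other being $\leq 1$) into the weighted norm, yields the target bound $C(C_1\vep)^2 s^{-1+\delta}$. For $\Pu^{a0}$ and $\Pu^{ab}$, at least one $u$-factor is tangential, and the sharper bound $|\del^{I_2}L^{J_2}\delu_a u|\leq CC_1\vep (s/t)t^{-1}\leq CC_1\vep s^{-1}$ from \eqref{eq 1.2 dw-sharp} (using $t\geq s$ in $\Kcal$) combined with the $L^2$ estimate on the other factor gives the same bound.

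The main care is the $\Pu^{00}$ case: without the null structure one would be forced to control $\|\del_t u\,\del_t u\|_{L^2}$ directly, which the present sharp decay rates cannot afford. The $(s/t)^2$ gain from \eqref{eq 1 null} is exactly what converts the otherwise insufficient $L^\infty$ bound $|\del_t u|\leq CC_1\vep s^{-1}$ into a usable estimate, by being absorbed into the weighted bootstrap $L^2$ norm $\|(s/t)\del_t u\|_{L^2}$. A minor bookkeeping check is that when several derivatives fall on the coefficient $\Pu^{\alpha\beta}$ the null bound survives differentiation (this is the content of \eqref{eq 1 null}), and the extra $t^{-|I_1|}$ from \eqref{eq 1 homogeneous} is harmless since $t\geq s$ in $\Kcal$.
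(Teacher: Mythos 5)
Your proposal is correct and follows essentially the same route as the paper: decompose the null form in the semi-hyperboloidal frame, use \eqref{eq 1 null} to extract the $(s/t)^2$ gain on the $\Pu^{00}$ block, apply Leibniz, and pair the low-order factor estimated via the sharp decay bounds \eqref{eq 3 dw}, \eqref{eq 1.2 dw-sharp} against the weighted $L^2$ bootstrap bound on the high-order factor. The only cosmetic difference is that you argue the low-order factor has $|I_2|+|J_2|\leq\lfloor N/2\rfloor\leq N-6$, while the paper phrases it as the complementary observation that if one factor exceeds order $N-6$ the other has order $\leq 5\leq N-6$; these are the same bookkeeping.
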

\begin{proof}
To prove \eqref{eq 4 energy-refined} we need to evoke the null structure of $P^{\alpha\beta}$. We see that
\begin{equation}\label{eq 5 energy-refined}
\aligned
P^{\alpha\beta}\del_{\alpha}u\del_{\beta}u =& \Pu^{00}\del_tu\del_tu + \Pu^{a0}\delu_au\del_tu + \Pu^{0a}\del_tu\delu_au + \Pu^{ab}\delu_au\delu_bu
\endaligned
\end{equation}

For the the first term in right-hand-side of \eqref{eq 5 energy-refined}, we see that by \eqref{eq 1 null},
$$
\aligned
\|\del^IL^J\left(\Pu^{00}\del_tu\del_tu\right)\|_{L^2(\Hcal_s)}
\leq &\sum_{I_1+I_2+I_3=I\atop J_1+J_2+J_3=J}\|\del^{I_3}L^{J_3}\Pu^{00}\cdot \del^{I_1}L^{J_1}\del_tu\cdot \del^{I_2}L^{J_2}\del_t u\|_{L^2(\Hcal_s)}
\\
\leq& C\sum_{|I_1|+|I_2|\leq|I|\atop|J_1|+|J_2|\leq |J| }\|(s/t)^2\del^{I_1}L^{J_1}\del_tu\cdot \del^{I_2}L^{J_2}\del_t u\|_{L^2(\Hcal_s)}.
\endaligned
$$
Then by the same argument in the proof of lemma \ref{lem 1 energy-refined}, we suppose without lose of generality that $|I_1|+|I_2|\leq N-6$. Then by \eqref{eq 4 dw},
$$
\|(s/t)^2\del^{I_1}L^{J_1}\del_tu\cdot \del^{I_2}L^{J_2}\del_t u\|_{L^2(\Hcal_s)}\leq CC_1\vep s^{-1}\|(s/t)^2\del^{I_2}L^{J_2}\del_t u\|_{L^2(\Hcal_s)}
\leq C(C_1\vep)^2s^{-1+\delta}.
$$

For the rest terms in right-hand-side of \eqref{eq 5 energy-refined}, we remark that each contains at least one ``good'' derivative (i.e. $\delu_a$). We take the term $\delu_au\del_tu$ as an example:
$$
\aligned
\|\del^IL^J\left(\Pu^{a0}\delu_au\del_tu\right)\|_{L^2{\Hcal_s}}
\leq & C\sum_{|I_1|+|I_2|\leq |I|\atop|J_1|+|J_2|\leq |J|}\|\del^{I_1}L^{J_1}\delu_au\del^{I_2}L^{J_2}\del_tu\|_{L^2(\Hcal_s)}
\endaligned
$$
where we applied the fact that $\Pu^{a0}$ is homogeneous of degree zero. Then we see that when $|I_1|+|J_1|\leq N-6$, by \eqref{eq 1.1 dw-sharp}
$$
\aligned
\|\del^{I_1}L^{J_1}\delu_au\del^{I_2}L^{J_2}\del_tu\|_{L^2(\Hcal_s)}\leq& CC_1\vep\|t^{-1}\del^{I_2}L^{J_2}\del_tu\|_{L^2(\Hcal_s)}
\\
=&CC_1\vep s^{-1}\|(s/t)\del^{I_2}L^{J_2}\del_tu\|_{L^2(\Hcal_s)}\leq C(C_1\vep)^2s^{-1+\delta}.
\endaligned
$$
When $|I_1|+|J_1|> N-6$, we see that $|I_2|+|J_2|\leq 5\leq N-6$ thus
$$
\aligned
\|\del^{I_1}L^{J_1}\delu_au\del^{I_2}L^{J_2}\del_tu\|_{L^2(\Hcal_s)}\leq& CC_1\vep s^{-1}\|\del^{I_1}L^{J_1}\delu_a u\|_{L^2(\Hcal_s)}
\leq C(C_1\vep)^2 s^{-1+\delta}.
\endaligned
$$
The rest terms such as $\delu_au\delu_bu$, are easier than the above two cases and we omit the detail.
\end{proof}

Now we are ready to establish the refined energy bounds \eqref{eq 2 bootstrap}. This is by proposition \ref{prop energy}. We make the following remark. When
$$
\|\del^IL^J u_0\|_{L^2(\Hcal_2)} + \|\del^IL^J v_0\|_{L^2(\Hcal_2)}\leq \vep, \quad |I|+|J|\leq N,
$$
$$
\|\del^{I'}L^{J'} u_1\|_{L^2(\Hcal_2)} + \|\del^{I'}L^{J'} v_1\|_{L^2(\Hcal_2)}\leq \vep, \quad |I'|+|J'|\leq N-1,
$$
Then there exists a constant $C_0$, determined by $N$ such that
$$
E(\del^IL^Ju,2)^{1/2} + E_1(\del^IL^J v,2)^{1/2}\leq C_0\vep.
$$

For the wave component, we recall \eqref{eq 1 energy-refined} and \eqref{eq 1 energy} combined with lemma \ref{lem 1 energy-refined}:
$$
E(\del^IL^Ju,s)^{1/2}\leq E(\del^IL^Ju,2)^{1/2} + \int_2^s\|\del^IL^J f\|_{\Hcal_\tau}d\tau\leq C_0\vep + C(C_1\vep)^2s^{\delta}
$$
and for Klein-Gordon component,
$$
\aligned
E_1(\del^IL^Jv,s)^{1/2}\leq& E_1(\del^IL^Jv,2)^{1/2} + \int_2^s\|\del^IL^J\left(P^{\alpha\beta}\del_{\alpha}u\del_{\beta}u\right)\|_{L^2(\Hcal_\tau)}d\tau
\\
\leq& C_0\vep + C(C_1\vep)^2 s^{\delta}.
\endaligned
$$

Now we make the following choice of $(C_1,\vep)$:
$$
C_1 > 2C_0, \quad \vep < \frac{C_1-2C_0}{2CC_1},\quad  C_1\vep\leq 1.
$$
This leads to \eqref{eq 2 bootstrap}. Then we conclude by the desired result of theorem \ref{thm main}.

\end{document}